\documentclass{amsart}

\usepackage{amsmath} 
\usepackage{amssymb}
\usepackage{cancel}

\newtheorem{theorem}{Theorem}[section] 
\newtheorem{claim}{Claim}[theorem]
 
\newtheorem{proposition}[theorem]{Proposition} 
\newtheorem{observation}[theorem]{Observation} 
\newtheorem{corollary}[theorem]{Corollary} 

\theoremstyle{definition}
\newtheorem{definition}[theorem]{Definition}
\newtheorem{example}[theorem]{Example}
\newtheorem{problem}[theorem]{Problem}

\theoremstyle{remark}
\newtheorem{remark}[theorem]{Remark}

\newtheorem{context}[theorem]{Context}

\numberwithin{equation}{section}
\newcommand{\forces}{\Vdash}

\newcommand{\bV}{{\bf V}} 
\newcommand{\lesdot}{\mathrel{\mathord{<}\!\!\raise 
0.8 pt\hbox{$\scriptstyle\circ$}}}

\newcommand{\conc}{{}^\frown\!}
\newcommand{\lh}{{\rm lh}\/}
\newcommand{\rest}{{\restriction}}
\newcommand{\Dom}{{\rm Dom}}

\newcommand{\cA}{{\mathcal A}}

\newcommand{\cB}{{\mathcal B}}

\newcommand{\gb}{{\mathfrak b}}

\newcommand{\bbD}{{\mathbb D}}
\newcommand{\cD}{{\mathcal D}}
\newcommand{\gd}{{\mathfrak d}} 

\newcommand{\cH}{{\mathcal H}}

\newcommand{\cF}{{\mathcal F}}
\newcommand{\cG}{{\mathcal G}}

\newcommand{\bbP}{{\mathbb P}}

\newcommand{\bbQ}{{\mathbb Q}}

\newcommand{\cR}{{\mathcal R}}

\newcommand{\cS}{{\mathcal S}}
\newcommand{\cT}{{\mathcal T}}

\newcommand{\cX}{{\mathcal X}}

\newcommand{\cZ}{{\mathcal Z}}

\newcommand{\cf}{{\rm cf}\/} 
\newcommand{\cl}{{\rm cl}\/}

\newcommand{\hight}{{\rm ht}\/}

\newcommand{\vare}{\varepsilon}

\newcommand{\Agame}{{\Game^{\rm rcA}_{d}}}

\newcommand{\agame}{{\Game^{{\rm rc}{\bf a}}_{d}}}

\newcommand{\pr}{{\rm pr}}

\newcommand{\suc}{{\rm succ}}
\newcommand{\mrot}{{\rm root}}
\newcommand{\spl}{{\rm split}}

\newcommand{\vtl}{\vartriangleleft}

\newcommand{\bairek}{{}^{\kappa}\kappa}
\newcommand{\locbh}{{\rm Loc}^{b,h}_\kappa}

\newcount\skewfactor
\def\mathunderaccent#1#2 {\let\theaccent#1\skewfactor#2
\mathpalette\putaccentunder}
\def\putaccentunder#1#2{\oalign{$#1#2$\crcr\hidewidth
\vbox to.2ex{\hbox{$#1\skew\skewfactor\theaccent{}$}\vss}\hidewidth}}
\def\name{\mathunderaccent\tilde-3 }

\begin{document}

\title{\.Zaba: a case for clubs}

\author{Andrzej Ros{\l}anowski}
\address{Department of Mathematics\\
 University of Nebraska at Omaha\\
 Omaha, NE 68182-0243, USA}
\email{aroslanowski@unomaha.edu}


\subjclass{Primary: 03E40 Secondary: 03E35, 03E17}
\date{Summer 2026}

\begin{abstract}
We present several remarks on cardinal coefficients associated with the
generalized Baire space $\bairek$ for an inaccessible cardinal $\kappa$. Our 
inquiry originates in the work of van der Vlugt  \cite{vdV25}. We provide
evidence that the appropriate coefficients to consider in $\bairek$ are
those determined by binary relations restricted to a club. In other words,
we make a case for the systematic use of clubs. 
\end{abstract}

\maketitle

\section{Introduction}

For a binary relation $R\subseteq X\times Y$ we consider the {\em
  dominating number\/} $\gd(R)$ and the {\em bounding number\/}
$\gb(R)$ defined as follows: 
\begin{itemize}
\item $\gd(R)=\min\{|\cG|:\cG\subseteq Y\ \wedge\ (\forall x\in 
  X)(\exists g\in\cG)(x\;R\; g)\}$,
\item $\gb(R)=\min\{|\cF|:\cF\subseteq X\ \wedge\ \neg (\exists y\in  
  Y)(\forall f\in\cF)(f\;R\; y)\}$.
\end{itemize}
For functions $h\in \bairek$ and $b\in {}^\kappa(\kappa+1)$ such that
$h(\alpha)\leq b(\alpha)$ are cardinals (for all $\alpha<\kappa$) we
let $\locbh$ be the collection of all functions
$\varphi\in\prod\limits_{\alpha<\kappa} [b(\alpha)]^{<
  h(\alpha)}$. If $b$ is identically $\kappa$ then we denote it
$\bar{\kappa}$ and we write ${\rm Loc}^{\bar{\kappa},h}_\kappa$. Elements
of $\locbh$ are oftentimes called {\em slaloms of width $<h$ below
  $b$}.   

Cardinal coefficient related to the space $\bairek$ and studied most
are $\gb(R)$ and $\gd(R)$ for the following binary relations.
\begin{enumerate}
\item[$\leq^*$\quad ] $f\leq^* g$ if and only if ($f,g\in\bairek$ and)
  for some $\alpha<\kappa$, for all $\beta\geq \alpha$ we have
  $f(\beta) \leq g(\beta)$;
\item[$\cancel{=^\infty}$\quad] $f\; \cancel{=^\infty}\; g$ if and only if
  ($f,g\in\bairek$ and) for some $\alpha<\kappa$, for all $\beta\geq
  \alpha$ we have $f(\beta) \neq g(\beta)$;
\item[$\in^*$\quad] $f\in^*\varphi$ if and only if
  ($f\in\bairek$ and $\varphi\in {\rm Loc}^{\bar{\kappa},h}_\kappa$
  and) for some $\alpha<\kappa$, for all $\beta\geq 
  \alpha$ we have $f(\beta) \in \varphi(\beta)$;
\item[$\cancel{\ni^\infty}$\quad] $\varphi\;\cancel{\ni^\infty}\; f$ if and
  only if ($f\in\bairek$ and $\varphi\in {\rm Loc}^{\bar{\kappa},h}_\kappa$
  and) for some $\alpha<\kappa$, for all $\beta\geq 
  \alpha$ we have $f(\beta) \notin \varphi(\beta)$.
\end{enumerate}
These relations can be naturally restricted to bounded subspaces and
we may consider  $\leq^*\rest \big(\prod\limits_{\alpha<\kappa} b(\alpha)
\times \prod\limits_{\alpha<\kappa} b(\alpha)\big)$, $\cancel{=^\infty}
\rest \big(\prod\limits_{\alpha<\kappa} b(\alpha) \times 
\prod\limits_{\alpha<\kappa} b(\alpha)\big)$,  $\in^*\rest
\big(\prod\limits_{\alpha<\kappa} b(\alpha) \times\locbh\big)$ and
$\cancel{\ni^\infty} \rest \big(\locbh\times
\prod\limits_{\alpha<\kappa} b(\alpha)\big)$. We will use the same
symbols for the resticted relations but the fact that we are dealing
with the restrictions will be reflected in notation for cardinal
coefficients. Thus the dominating and bounding numbers for the
relations on $\bairek$ will be denoted, respectively, by
$\gd_\kappa(\leq^*)$,  $\gb_\kappa(\leq^*)$,
$\gd_\kappa(\cancel{=^\infty})$, $\gb_\kappa(\cancel{=^\infty})$,
$\gd_\kappa^h(\in^*)$, $\gb_\kappa^h(\in^*)$ and
$\gd^h_\kappa(\cancel{\ni^\infty})$, 
$\gb^h_\kappa(\cancel{\ni^\infty})$. The bounded versions of these
coefficients will be called $\gd_\kappa^b(\leq^*)$,
$\gb_\kappa^b(\leq^*)$, $\gd_\kappa^b(\cancel{=^\infty})$,
$\gb_\kappa^b(\cancel{=^\infty})$, $\gd_\kappa^{b,h}(\in^*)$,
$\gb_\kappa^{b,h}(\in^*)$ and 
$\gd^{b,h}_\kappa(\cancel{\ni^\infty})$,
$\gb^{b,h}_\kappa(\cancel{\ni^\infty})$.

Previous studies of $\kappa$--properness for $\kappa$--support
iterations suggest that instead of considering relations that hold
``from some point on,'' we can focus on relations that hold ``on a club
of $\alpha<\kappa$.'' This gives us the following binary relations:
\begin{enumerate}
\item[$\leq^\cl$\quad ] $f\leq^\cl g$ if and only if ($f,g\in\bairek$ and)
 the set $\big\{\beta<\kappa:f(\beta) \leq g(\beta)\big\}$ contains a
 club of $\kappa$;
\item[$\cancel{=^{\rm stat}}$\quad] $f\; \cancel{=^{\rm stat}}\; g$ if
  and only if ($f,g\in\bairek$ and) the set $\big\{\beta<\kappa:
  f(\beta) \neq g(\beta)\big\}$ contains a club of $\kappa$; 
\item[$\in^\cl$\quad] $f\in^\cl\varphi$ if and only if
  ($f\in\bairek$ and $\varphi\in {\rm Loc}^{\bar{\kappa},h}_\kappa$
  and) the set $\big\{\beta<\kappa: f(\beta) \in \varphi(\beta)\big\}$
  contains a club of $\kappa$;
\item[$\cancel{\ni^{\rm stat}}$\quad] $\varphi\;\cancel{\ni^{\rm stat}}\; f$ if and
  only if ($f\in\bairek$ and $\varphi\in {\rm Loc}^{\bar{\kappa},h}_\kappa$
  and) the set $\big\{\beta<\kappa: f(\beta) \notin
  \varphi(\beta)\big\}$ contains a club of $\kappa$,
\end{enumerate}
and similarly with the versions ``below $b\in\bairek$.'' The
corresponding cardinal coefficients will be denoted naturally, so we
will have $\gb_\kappa^b(\leq^\cl)$, $\gd_\kappa^b(\cancel{=^{\rm
    stat}})$, etc.

Cardinal coefficients associated with relations ``on a club'' were
considered in several papers in the past. For instance in 1995,
Cummings and Shelah \cite{CuSh:541} proved that, for a  strongly
inaccessible $\kappa$, $\gd_\kappa(\leq^*)=\gd_\kappa(\leq^\cl)$ and
$\gb_\kappa(\leq^*)=\gb_\kappa(\leq^\cl)$. 
\medskip

\noindent {\bf Notation:}\quad Our notation is rather standard and
compatible with that of classical textbooks (like Jech \cite{J}). In forcing
however we keep the older convention that {\em a stronger condition is the
  larger one}.  

\begin{enumerate}
\item Ordinal numbers will be denoted by the lower case initial letters of
the Greek alphabet ($\alpha,\beta,\gamma,\delta,\vare,\zeta$) and also
by $\xi, i,j$ (with possible sub- and superscripts). Cardinal numbers will
be called $\kappa,\lambda,\mu$; {\em $\kappa$ will be  always assumed to be
  inaccessible\/}. 

By $\chi$ we will denote a {\em sufficiently large\/} regular cardinal; 
$\cH(\chi)$ is the family of all sets hereditarily of size less than
$\chi$. Moreover, we fix a well ordering $<^*_\chi$ of $\cH(\chi)$. 

\item For two sequences $\eta,\nu$ we write $\nu\vtl\eta$
whenever $\nu$ is a proper initial segment of $\eta$, and $\nu
\trianglelefteq\eta$ when either $\nu\vtl\eta$ or $\nu=\eta$. 
The length of a sequence $\eta$ is denoted by $\lh(\eta)$.

\item {\em A tree\/} is a $\vtl$--downward closed set of sequences. It $T$
  is a tree and $t\in T$, then $\suc_T(t)=\{x:t\conc \langle x\rangle\in
  T\}$, $\spl(T)=\{s\in T:|\suc_T(s)|\geq 2\}$ and $\mrot(T)$ is the
  shortest member of $\spl(T)$ (assuming $\spl(T)\neq\emptyset$). We also
  define $(T)_t=\{s\in T:s\trianglelefteq t\mbox{ or }t\vtl s\}$. A tree $T$
  is a {\em complete tree\/} if every $\vtl$--chain of length less than its
  height ${\rm ht}(T)$ of elements of $T$ has a $\vtl$--upper bound in $T$.
  If the height ${\rm ht}(T)$ is $\kappa$ then we may say {\em complete
    $\kappa$--tree}. 
\item For a complete $\kappa$--tree $T$, the set of all limit
  $\kappa$--branches through $T$ is denoted $\lim(T)$. Thus $f\in\lim(T)$ if
  and only if $f$ is a function on $\kappa$ and $f\rest\alpha\in T$ for all
  $\alpha<\kappa$.  
\item We will consider several games of two players. One player will be
called {\em Generic\/} or {\em Complete\/} or just {\em COM\/}, and we
will refer to this player as ``she''. Her opponent will be called {\em
Antigeneric\/} or {\em Incomplete} or just {\em INC\/} and will be
referred to as ``he''. 

\item For a forcing notion $\bbP$, all $\bbP$--names for objects in the
  extension via $\bbP$ will be denoted with a tilde below (e.g.,
  $\name{\tau}$, $\name{X}$) and $\name{G}$ stands for the canonical
  $\bbP$--name for the generic filter in $\bbP$.

By ``$\kappa$--support iterations'' we mean iterations in which domains of
conditions are of size $\leq\kappa$. However, we will pretend that
conditions in a $\kappa$--support iteration $\bar{\bbQ}=\langle\bbP_\zeta,
\name{\bbQ}_\zeta:\zeta<\zeta^*\rangle$ are total functions on $\zeta^*$
and for a condition $p$ in the limit $\lim(\bar{\bbQ})$ of the iteration
$\bar{\bbQ}$ and $\alpha\in\zeta^*\setminus\Dom(p)$ we will let
$p(\alpha)=\name{\emptyset}_{\name{\bbQ}_\alpha}$.  
\end{enumerate}
\bigskip

\begin{definition}
\label{strcom}
Let $\bbP$ be a forcing notion.
\begin{enumerate}
\item Let $\Game_0^\lambda(\bbP)$ be the following game of two players, {\em
    Complete} and  {\em Incomplete}:
  \begin{quotation}
\noindent the game lasts at most $\lambda$ moves and during a play the
players construct a sequence $\langle (p_i,q_i): i<\lambda\rangle$ of pairs
of conditions from $\bbP$ in such a way that $(\forall j<i<\lambda)(
p_j\leq q_j\leq p_i)$ and at the stage $i<\lambda$ of the game, first 
Incomplete chooses $p_i$ and then Complete chooses $q_i$.  
\end{quotation}
Complete wins if and only if for every $i<\lambda$ there are legal moves for
both players. 
\item We say that the forcing notion $\bbP$ is {\em strategically
$({<}\lambda)$--complete\/} if Complete has a winning strategy in the game 
$\Game_0^\lambda(\bbP)$.
\item Let $N\prec (\cH(\chi),\in,<^*_\chi)$ be a model such that
${}^{<\lambda} N\subseteq N$, $|N|=\lambda$ and $\bbP\in N$. We say that a
condition $p\in\bbP$ is {\em $(N,\bbP)$--generic in the standard
sense\/} (or just: {\em $(N,\bbP)$--generic\/}) if for every
$\bbP$--name $\name{\tau}\in N$ for an ordinal we have $p\forces$``
$\name{\tau}\in N$ ''. 
\item $\bbP$ is {\em $\lambda$--proper in the standard sense\/} (or just:
{\em $\lambda$--proper\/}) if there is $x\in \cH(\chi)$  such that for
every model $N\prec (\cH(\chi),\in,<^*_\chi)$ satisfying  
\[{}^{<\lambda} N\subseteq N,\quad |N|=\lambda\quad\mbox{ and }\quad\bbP,x
  \in N, \]
and every condition $q\in N\cap\bbP$ there is an $(N,\bbP)$--generic
condition $p\in\bbP$ stronger than $q$.
\end{enumerate}
\end{definition}

In this paper we assume the following.

 \begin{context}
\label{incon}
\begin{enumerate}
\item[(a)] $\kappa$ is a strongly inaccessible cardinal and $\bar{\kappa}$
  denotes a function on $\kappa$ with the constant value $\kappa$.
\end{enumerate}
Parameters $b,h$ etc for our cardinal coefficient will be usually either
$\bar{\kappa}$ or they will belong to one of the following three sets of
functions.
\begin{enumerate}
\item[(b)] $\cR^-_\kappa$ is the family of all non-decreasing functions
  $b\in {}^\kappa\kappa$ such that  $b(\alpha)\geq 3$ is a cardinal for
  every $\alpha<\kappa$.
\item[(c)]  $\cR_\kappa$ is the family of those $b\in\cR^-_\kappa$ that
  each value $b(\alpha)$ is regular infinite cardinal and the set
  $\{\alpha<\kappa: \alpha<b(\alpha)\}$ contains a club subset of $\kappa$ 
\item[(d)]  $\cR^+_\kappa$ consists of all strictly increasing $b\in
  \cR_\kappa$ such that $\alpha<b(\alpha)$ for each $\alpha<\kappa$. 
\end{enumerate}
\end{context}

\section{Around $\gb^b_\kappa(\leq^*)$ and
  $\gb^{b,h}_\kappa(\cancel{\ni^\infty})$}  
In \cite[Question 125]{vdV25} Tristan van der Vlugt asked:
\begin{itemize}
\item Is it consistent that $\gd_\kappa^b(\leq^*)<\gd_\kappa(\leq^*)$ for all 
$b\in \bairek$?
\item Is it consistent that $\gb_\kappa(\leq^*)<\gb_\kappa^b(\leq^*)$ for all 
$b\in \bairek$?
\end{itemize}
The second part of \cite[Question 129]{vdV25} van der Vlugt asked a related
question if there exist nontrivial $b,h,b',h'\in\bairek$ such that,
consistently, $\gb^{b,h}_\kappa(\cancel{\ni^\infty})< \gb^{b',h'}_\kappa
(\cancel{\ni^\infty})$. The two questions are somewhat related because
$\gb^b_\kappa(\leq^*) = \gb^{b,b}_\kappa(\cancel{\ni^\infty})$. Also the
natural forcing notions related to these problems are very similar.

In this section we will present a couple of observations suggesting that the
answer to these questions may require the use of substantially larger cardinals.

By \cite[Lemma 17]{vdV25}, the cardinal $\gb_\kappa^b(\leq^*)$ can
  have value bigger than $\kappa$ only if there exists a club set
  $C\subseteq \kappa$ such that for each $\xi\in C$ we have
  $\cf(b(\alpha)) > \xi$ for all $\alpha\geq \xi$. Also,
  $\gb_\kappa^b(\leq^*)$ will not change if we replace each $b(\alpha)$
  with $\cf\big(b(\alpha)\big)$. Therefore, when investigating
  $\gb_\kappa^b(\leq^*)$  it is only natural to restrict our attention to
  $b\in \cR_\kappa$.  In the studies of
  $\gb^{b,h}_\kappa(\cancel{\ni^\infty})$ there is no apparent reason why
  such restriction could be imposed, except that the strategic completeness
  of  the relevant forcing will require $h\in\cR_\kappa$. 

  If we are interested in a forcing model for
  $\gb_\kappa(\leq^*)<\gb^b_\kappa(\leq^*)$ we may try to iterate with
  $\kappa$--support forcing notions adding a dominating function below $b$
  but without adding a $\bairek$--dominating real. A natural candidate to be
  used here is $\bbQ^*_b$ defined below. Similarly, for a model for
  $\gb^{b,h}_\kappa(\cancel{\ni^\infty})< \gb^{b',h'}_\kappa
  (\cancel{\ni^\infty})$ we may be thinking about iterating with
  $\kappa$--supports forcing notions adding a function below $b$ avoiding
  slaloms from ${\rm Loc}^{b,h}_\kappa\cap \bV$. The candidate for it would
  be $\bbQ^h_b$.

\begin{definition}
  [See {\cite[Definition 2.20(1)]{RoSh:1001}}]
\label{boudQ4}
Assume that $\kappa$ is inaccessible and $h,b\in
\cR_\kappa\cup\{\bar{\kappa}\}$ and $h\leq b$. We define forcing notions
$\bbQ^*_b$ and $\bbQ^h_b$ as follows.

\begin{enumerate}
\item {\bf A condition} in $\bbQ^*_b$ is a complete $\kappa$--tree
  $T\subseteq \bigcup\limits_{\alpha<\kappa}
  \prod\limits_{\beta<\alpha}b(\beta)$ such that  for every $t\in T$, if 
  $\mrot(T)\trianglelefteq t$ then $\suc_T(t)$ is a co-bounded subset of
  $b\big(\lh(t)\big)$. 

\noindent  
{\bf The order} of $\bbQ^*_b$ is the reverse inclusion.
\medskip

\noindent  
We will say that a condition $S\in \bbQ^*_b$ {\em purely extends\/}
$T\in\bbQ^*_b$ ($S\geq_\pr T$ in short) if $S\geq T$ and
$\mrot(S)=\mrot(T)$.

\item {\bf A condition} in $\bbQ^h_b$ is a complete $\kappa$--tree
  $T\subseteq \bigcup\limits_{\alpha<\kappa}
  \prod\limits_{\beta<\alpha}b(\beta)$ such that  for every $t\in T$, if 
  $\mrot(T)\trianglelefteq t$ then $\big|b(\lh(t))\setminus
  \suc_T(t)\big|<h(\alpha)$.
  
\noindent  
{\bf The order} of $\bbQ^h_b$ is the reverse inclusion.
\medskip

\noindent  
We will say that a condition $S\in \bbQ^h_b$ {\em purely extends\/}
$T\in\bbQ^h_b$ ($S\geq_\pr T$ in short) if $S\geq T$ and
$\mrot(S)=\mrot(T)$.
\end{enumerate}
\end{definition}

The forcings $\bbQ^*_b$ are a special type of $\bbQ^h_b$:
$\bbQ^*_b=\bbQ^b_b$ for $b\in \cR_\kappa$ and $\bbQ^*_{\bar{\kappa}} =
\bbQ^{\bar{\kappa}}_{\bar{\kappa}}$ 

Clearly, the forcing notion $\bbQ^h_b$ adds an element of
$\prod\limits_{\alpha<\kappa}b(\alpha)$ avoiding (in the sense of
$\cancel{\ni}^\infty$) all slaloms from  ${\rm Loc}^{b,h}_\kappa\cap
\bV$. If $b=h$ this means that $\bbQ^*_b$ adds a $\leq^*$--dominating 
function in $(\prod\limits_{\alpha<\kappa}b(\alpha),\leq^*)$.

By \cite[Proposition B.6.2]{RoSh:777}, $\bbQ^h_b$ is strategically
$({<}\kappa)$--complete (here we need to use the assumption that
$h\in\cR_\kappa$ or $h=\bar{\kappa}$); this is also shown in Proposition
\ref{q2isAb}(1) here. By \cite[Proposition 3.9(d),
Theorem 4.1]{RoSh:1001} or \cite[Corollary 3.13]{RoSh:942},
$\kappa$--support  iterations of these forcings will be
$\kappa$--proper. But even though conditions in $\bbQ^h_b$ 
look like bounded trees, the forcing $\bbQ^h_b$ is not
${}^\kappa\kappa$--bounding.       

\begin{proposition}
  \label{addunb}
  Assume that $\kappa$ is inaccessible and $h,b\in
\cR_\kappa\cup\{\bar{\kappa}\}$ and $h\leq b$. Then the forcing notion
$\bbQ^h_b$ adds a $\leq^*$--unbounded function in ${}^\kappa\kappa$.  
\end{proposition}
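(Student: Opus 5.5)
The plan is to read off an unbounded function from the generic branch by a coding argument that uses only the fact that, above the root, every successor set of a condition is ``large''. Let $\name{\eta}$ be the canonical name for the generic function $\bigcup\{\mrot(T):T\in\name{G}\}\in\prod\limits_{\alpha<\kappa}b(\alpha)$.

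First I would record the key combinatorial fact. If $T\in\bbQ^h_b$ and $\mrot(T)\trianglelefteq t\in T$, then $\suc_T(t)$ meets every subset of $b(\lh(t))$ of cardinality $b(\lh(t))$. Indeed, the complement $b(\lh(t))\setminus\suc_T(t)$ has size $<h(\lh(t))\leq b(\lh(t))$, and $b(\lh(t))$ is an infinite cardinal (regular, or $\kappa$). In the ground model, for each $\beta<\kappa$ I fix a partition $b(\beta)=A_\beta\cup B_\beta$ into two sets, each of size $b(\beta)$. Then I define the name
\[\name{f}(\alpha)=\min\{\beta>\alpha:\name{\eta}(\beta)\in A_\beta\}.\]

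Next I need to check that $\name{f}$ is forced to be a total function in ${}^\kappa\kappa$. By the fact above, below any condition and above any node past the root we can extend so that the next value lands in $A_\beta$. Hence the set of conditions deciding that some $\beta>\alpha$ has $\name{\eta}(\beta)\in A_\beta$ is dense, and $\name{f}(\alpha)<\kappa$ is forced.

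The main step is a density argument. Fix $g\in{}^\kappa\kappa\cap\bV$, an ordinal $\alpha_0<\kappa$, and a condition $T$. I will find $S\geq T$ and $\alpha\geq\alpha_0$ with $S\forces \name{f}(\alpha)>g(\alpha)$; this shows $\name{f}$ is not $\leq^*$--bounded by any ground model function.
\begin{itemize}
\item Pick $t\in T$ with $\mrot(T)\trianglelefteq t$ and $\alpha:=\lh(t)\geq\alpha_0$.
\item Build by recursion a $\vtl$--increasing sequence $\langle t_\beta:\alpha\leq\beta\leq g(\alpha)+1\rangle$ in $T$ with $t_\alpha=t$ and $\lh(t_\beta)=\beta$.
\item At successor steps $\beta+1\leq g(\alpha)+1$, choose $t_{\beta+1}=t_\beta\conc\langle x\rangle$ with $x\in\suc_T(t_\beta)\cap B_\beta$, which is possible by the fact above.
\item At limit steps, $t_\beta=\bigcup_{\gamma<\beta}t_\gamma$ is in $T$ because $T$ is a complete $\kappa$--tree that is downward closed.
\item Finally put $S=(T)_{t^*}$ for $t^*=t_{g(\alpha)+1}$. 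This is a condition, since above its new root it inherits the successor sets of $T$.
\end{itemize}
Then $S$ forces $\name{\eta}(\beta)\in B_\beta$ for all $\alpha\leq\beta\leq g(\alpha)$, so $S\forces\name{f}(\alpha)>g(\alpha)$.

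I do not expect a real obstacle here. The only point needing care is that $b(\beta)$ may be a small regular cardinal such as $\omega$, so one cannot simply choose large values of $\name{\eta}$ directly, and the excluded set may contain prescribed values like $0$. This is exactly why the proof uses the two-piece partition coding rather than the values of $\name{\eta}$ themselves. The fact that $h\leq b$ and $b(\beta)$ is a regular infinite cardinal (or $\kappa$) guarantees that both pieces stay available at every node above the root.
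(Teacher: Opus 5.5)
Your proposal is correct and is essentially the paper's own argument. The paper likewise splits each $b(\alpha)$ into $A_\alpha$ and $b(\alpha)\setminus A_\alpha$, both of size $b(\alpha)$, and reads off $g_{\name{W}}(\alpha)=\min\{\beta>\alpha:\name{W}(\beta)\in A_\beta\}$ from the generic branch. It then uses the same density argument: extend a node above the root through an interval of coordinates while avoiding $A_\beta$, which is possible because successor sets above the root omit fewer than $h\leq b$ points. The differences are cosmetic. The paper uses the convention $\min(\emptyset)=0$ instead of your density argument for totality, and it uses the interval $[\alpha,\alpha+f(\alpha)+1)$ rather than your $[\alpha,g(\alpha)]$; the paper's choice also avoids the trivial case $g(\alpha)<\alpha$, which your recursion should handle separately.
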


\begin{proof}
  For each $\alpha<\kappa$ choose $A_\alpha\subseteq b(\alpha)$ such that
  $|A_\alpha| =|b(\alpha)\setminus A_\alpha|=b(\alpha)$. For $x\in
  \prod\limits_{\alpha<\kappa} b(\alpha)$  let $S_x=\{\alpha<\kappa:
  x(\alpha)\in A_\alpha\}$ and let $g_x(\alpha)=\min\big(S_x\setminus
  (\alpha+1)\big)$ (with the convention that $\min(\emptyset)=0$). Let
  $\name{W}$ be the canonical  $\bbQ^h_b$--name for the generic function in
  $\prod\limits_{\alpha<\kappa}   b(\alpha)$ added by it. Thus
  $T\forces_{\bbQ^h_b} \mrot(T)\vtl \name{W}\in \prod\limits_{\alpha<\kappa}
  b(\alpha)$. An easy density argument shows that for each $f\in \bairek$, 
  \[\forces_{\bbQ^h_b} \big(\forall \xi<\kappa\big)\big( \exists
    \alpha>\xi\big)\big(\forall \beta\in [\alpha,\alpha+f(\alpha)+1)\big)
    \big(\name{W}(\beta)\notin A_\beta\ \wedge\ \name{W}(\alpha+f(\alpha)+1
    )\in  A_{\alpha+f(\alpha)+1}\big).\]
  Consequently, $\forces_{\bbQ^h_b}\big(\forall f\in {}^\kappa\kappa\cap
   \bV\big)\big(\forall \xi<\kappa\big)\big(\exists \alpha>\xi\big)
   \big(f(\alpha)<g_{\name{W}}(\alpha)\big)$.
 \end{proof}
 
The above proposition does not immediately eliminate the natural proof for
the consistency of  $\gb_\kappa(\leq^*)<\gb^b_\kappa(\leq^*)$, because we
still may hope for the iterations of $\bbQ^*_b=\bbQ^b_b$ not adding {\em
  dominating\/} functions. However, for a small inaccessible $\kappa$ with
diamonds, the forcing notion $\bbQ^h_b$ adds a ${}^\kappa\kappa$--dominating
element of  ${}^\kappa\kappa$.  

\begin{proposition}
  \label{bad}
Suppose $\kappa$ is strongly inaccessible not Mahlo cardinal and that
$\diamondsuit_\kappa$ holds true. Let $h,b\in\cR_\kappa$ be such that $h\leq
b$. Then the forcing notion $\bbQ^h_b$ adds a ${}^\kappa\kappa$--dominating
function. That is, for some $\bbQ^h_b$--name $\name{\tau}$ we have
\[\forces_{\bbQ^h_b} \mbox{`` }\name{\tau}\in {}^\kappa\kappa\ \wedge\
  (\forall x\in {}^\kappa\kappa\cap \bV)(\exists \alpha<\kappa)(\forall
  \beta\geq \alpha)(x(\beta)\leq \name{\tau}(\beta))\mbox{''.}\]
\end{proposition}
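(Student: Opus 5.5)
The plan is to combine two ingredients: the coding trick from the proof of Proposition \ref{addunb}, and a diamond sequence used to guess ground model functions. Since $\kappa$ is not Mahlo, fix a club $C\subseteq\kappa$ consisting of singular limit ordinals. Let $c^+(\beta)=\min(C\setminus(\beta+1))$, so the intervals $[\gamma,c^+(\gamma))$ for $\gamma\in C$ partition a tail of $\kappa$ into blocks. Using $\diamondsuit_\kappa$, fix $\langle x_\delta:\delta<\kappa\rangle$ with $x_\delta\in{}^\delta\delta$ such that for every $x\in\bairek$ the set $\{\delta:x\rest\delta=x_\delta\}$ is stationary. Because the values $h(\delta)\le b(\delta)$ are regular cardinals and $\delta<h(\delta)$ on a club, I would further code into $b(\delta)$ a choice of ``which earlier guess to use''. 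For instance, partition $b(\delta)$ into pieces $\langle A^\delta_\varepsilon:\varepsilon<\delta\rangle$, each of size $b(\delta)$. A co-small successor set always meets every piece, and since $h(\delta)$ is regular and fewer than $h(\delta)$ values are omitted, we may also force $W(\delta)$ into any prescribed piece.

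With this, the name $\name{\tau}$ is defined from the generic $\name{W}$ as follows. At $\beta$, let $\gamma$ be the largest element of $C$ with $\gamma\le\beta$, and set
\[
\name{\tau}(\beta)=\sup\big\{x_\delta(\beta):\delta\in(\beta,c^+(\beta)),\ \name{W}(\delta)\in A^\delta_{\varepsilon}\mbox{ for some }\varepsilon\le\beta\big\}+1 .
\]
This is a supremum of fewer than $\kappa$ ordinals, each below $\kappa$, so $\name{\tau}$ is forced to lie in $\bairek$. The intended meaning is that the generic ``activates'' the guesses $x_\delta$ at certain $\delta$. Each activated guess, with $\delta>\beta$, contributes its value at $\beta$.

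The core of the proof is a density argument. Given $x\in\bairek\cap\bV$ and $T\in\bbQ^h_b$, I want $T'\geq T$ and $\alpha<\kappa$ such that $T'\forces(\forall\beta\geq\alpha)(x(\beta)\leq\name{\tau}(\beta))$. I would build $T'$ by induction on levels above $\mrot(T)$. For each node $t$ at a level $\delta$ with $x\rest\delta=x_\delta$, I prune $\suc(t)$ so that only values activating $\delta$ for all relevant $\beta$ remain. This is legitimate, because the set removed has size $<h(\delta)$, or the constraint is met by every value by the choice of the partition. Completeness at limit levels then follows since pruning is done in a club-closed way, the $\kappa$-completeness of pure extensions being as in Proposition \ref{q2isAb}(1). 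It remains to ensure that every $\beta$ beyond $\alpha$ lies below some guessing $\delta$ inside its own block $(\beta,c^+(\beta))$.

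I expect this last point to be the main obstacle. The guessing set $\{\delta:x\rest\delta=x_\delta\}$ is only stationary, and need not meet every block of $C$. To overcome it I would strengthen the diamond to a version where each $x_\delta$ guesses not just $x\rest\delta$ but a ``local patch''. Using the singularity of $\gamma\in C$, fix a cofinal sequence in $\gamma$ of length $\cf(\gamma)<\gamma$. The diamond at a stationary set of $\gamma$'s then records bounds for $x$ on the whole next block $[\gamma,c^+(\gamma))$, coded as a $<\kappa$-sized object. The pruning at level $\gamma$, which is the only place where more than $<h(\gamma)$ many requirements must be met simultaneously, uses that there are only $\cf(\gamma)<h(\gamma)$ many of them. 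If this local guessing can be arranged, the pruning handles each block at its starting point and the density argument closes. If not, I would try instead to let $\name{W}$ itself choose, at each $\gamma\in C$, an ordinal bound for the whole block, with the diamond used only to show that the generic bound is forced above $x$ on a tail.
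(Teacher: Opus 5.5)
There is a genuine gap, and it is already fatal in the definition of $\name{\tau}$. Since $x_\delta\in{}^\delta\delta$ and only $\delta\in(\beta,c^+(\beta))$ enter the supremum, every term is below $c^+(\beta)$. Hence $\forces\name{\tau}(\beta)\le c^+(\beta)+1$, and as $c^+\in\bV$, the ground model function $\beta\mapsto c^+(\beta)+2$ is never dominated by $\name{\tau}$, whatever the activation pattern. This is structural, not a bookkeeping slip. Suppose $\name{\tau}(\beta)$ is computed by a ground model rule from $\name{W}\rest\zeta_\beta$ with $\zeta_\beta<\kappa$ in $\bV$. Then $\name{\tau}$ is bounded by a ground model function, because $\big|\prod_{\alpha<\zeta_\beta}b(\alpha)\big|<\kappa$. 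Your fallback, a block bound read off from $\name{W}$ near $\gamma$, fails for the same reason.

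Two further steps also fail. First, a condition of $\bbQ^h_b$ omits fewer than $h(\delta)$ successors at each node above its root, while the complement of each piece $A^\delta_\varepsilon$ has size $b(\delta)\ge h(\delta)$. So, as you yourself note, every admissible successor set meets every piece, and no pure extension can force $\name{W}(\delta)$ into a prescribed piece; the pruning that is supposed to \emph{activate} $\delta$ is illegal. Second, the obstacle you flag is real and not removed: a diamond guessing $x$ (or patches of $x$) succeeds only on a stationary set of blocks, never on a tail.

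The missing idea is to let the diamond guess the generic rather than $x$, and to read $\name{\tau}$ off non-locally. The paper's proof proceeds as follows.
\begin{enumerate}
\item[(i)] Take $\langle f_\delta:\delta\in C\rangle$ with $f_\delta\in\prod_{\alpha<\delta}b(\alpha)$, guessing each element of $\prod_{\alpha<\kappa}b(\alpha)$ stationarily often.
\item[(ii)] Put $\name{A}=\{\delta\in C:f_\delta\subseteq\name{W}\}$. It is unbounded, since any branch of any condition is guessed above the root.
\item[(iii)] Let $\name{\tau}(\xi)=\name{\tau}_0(\xi+1)$, where $\name{\tau}_0$ is the increasing enumeration of $\name{A}$.
\item[(iv)] Given $x$ and $S$, fix a continuous increasing $\langle\gamma_i:i<\kappa\rangle\subseteq C$ above $\lh(\mrot(S))$ with $x(\xi)<\gamma_i$ for $\xi<\gamma_i$.
\item[(v)] Build one slalom $\psi\in\locbh$ catching every $f_\delta$ with $\delta\in C\cap(\gamma_i,\gamma_{i+1}]$, i.e.\ $f_\delta(\xi)\in\psi(\xi)$ for some $\xi\in[\gamma_i,\delta)$. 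This is Claim \ref{cl2}, proved by induction on the order type of $C\cap(\alpha,\beta]$. Your singularity observation belongs here: at a limit $\beta\in C$ only $\cf(\beta)<\gamma_0<h(\gamma_0)$ guesses must be caught at the single coordinate $\gamma_0$.
\item[(vi)] Deleting $\psi(\lh(t))$ from every successor set gives a legal pure extension $T\ge S$. This $T$ forces $\name{A}\setminus(\gamma_0+1)\subseteq\{\gamma_j:j\mbox{ limit}\}$.
\item[(vii)] Hence for $\xi\in[\gamma_i,\gamma_{i+1})$ with $i\ge1$, we get $\name{\tau}_0(\xi+1)\ge\gamma_{i+1}>x(\xi)$.
\end{enumerate}
The small omitted sets of $\bbQ^h_b$ are thus used to control \emph{where} guesses of the generic are realized. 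That is something they can do, unlike forcing $\name{W}(\delta)$ into a large piece.
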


\begin{proof}
  Let $\kappa,h,b$ be as in the assumptions and let $C\subseteq \kappa$ be a 
  club consisting of singular cardinals only and such that
  $\alpha<h(\alpha)$ for $\alpha\in C$. Let $\langle f_\delta:\delta\in
  C\rangle$ be such that
  \begin{enumerate}
\item[$(*)_1$] $f_\delta\in \prod\limits_{\alpha<\delta} b(\alpha)$ for each
    $\delta\in C$, and
\item[$(*)_2$] for each $f\in \prod\limits_{\alpha<\kappa} b(\alpha)$ the set
  $\{\delta\in C: f_\delta\subseteq f\}$ is stationary. 
\end{enumerate}
(So this is an ``interpretation'' of a diamond sequence on $\kappa$.)

\begin{claim}
  \label{cl1}
For each $S\in\bbQ^h_b$ and $\alpha<\kappa$ we have $\{f_\delta: \delta\in
C\setminus \alpha\}\cap S\neq \emptyset$. 
\end{claim}

\begin{proof}[Proof of the Claim]
  Pick $f\in \lim(S)$. There is $\vare\in C$ above
  $\max\big(\alpha,\lh(\mrot(S))\big)$ such that $f_\vare=f\rest \vare\in
  \{f_\delta:\delta\in C\}\cap S$.  
\end{proof}

\begin{claim}
  \label{cl2}
Suppose $\alpha<\beta$ are from $C$. Then there is a function $\psi\in
\prod\limits_{\xi\in [\alpha,\beta)}\big[b(\xi)\big]^{\textstyle {<}h(\xi)}$ such that
\[\big(\forall \delta\in C\cap (\alpha,\beta]\big) \big(\exists\xi\in
  [\alpha,\delta) \big) \big(f_\delta(\xi)\in \psi(\xi) \big).\]
\end{claim}

\begin{proof}[Proof of the Claim]
Induction on the order type of $C\cap (\alpha,\beta]$.

If $\beta=\min \big(C\setminus (\alpha+1) \big)$, then picking any
$\psi\in\prod\limits_{\xi\in [\alpha,\beta)} \big[b(\xi)\big]^{\textstyle
  {<}h(\xi)}$ with $f_\beta(\alpha)\in \psi(\alpha)$ will do. 
Similarly, if for some $\gamma\in C\cap (\alpha,\beta)$ we have
$\beta=\min\big(C\setminus (\gamma+1) \big)$, then we apply the inductive 
hypothesis to $C\cap (\alpha,\gamma]$ to find a function
$\psi_0\in\prod\limits_{\xi\in [\alpha,\gamma)} \big[b(\xi)\big]^{\textstyle
  {<}h(\xi)} $ such that  
\[\big(\forall \delta\in C\cap (\alpha,\gamma]\big) \big(\exists\xi\in
  [\alpha,\delta) \big) \big(f_\delta(\xi)\in \psi_0(\xi) \big).\]
 Then we extend $\psi_0$ to $\psi\in\prod\limits_{\xi\in [\alpha,\beta)}
 \big[b(\xi)\big]^{\textstyle{<}h(\xi)}$ by letting $\psi(\xi)=
 \{f_\beta(\xi)\}$ for $\xi\in [\gamma,\beta)$. 

 Now suppose that $\beta=\sup(C\cap \beta)$. We know that
 $\cf(\beta)<\beta$, so we may choose an increasing continuous sequence
 $\langle \gamma_i:i<\cf(\beta)\rangle\subseteq C\cap (\alpha,\beta)$ with
 $\cf(\beta)<\gamma_0$ and $\sup\limits_{i<\cf(\beta)} \gamma_i=\beta$. The
 order type of each of the sets $C\cap (\alpha,\gamma_0]$ and $C\cap
 (\gamma_i,\gamma_{i+1}]$ for $i<\cf(\beta)$ is smaller than the order type
 of $C\cap (\alpha,\beta]$. By the inductive hypothesis we may choose
 functions $\psi_0\in\prod\limits_{\xi\in [\alpha,\gamma_0)}
 \big[b(\xi)\big]^{\textstyle {<}h(\xi)}$ and $\psi_{i+1}  \in
 \prod\limits_{\xi\in [\gamma_i,\gamma_{i+1})} \big[b(\xi)\big]^{\textstyle 
  {<}h(\xi)}$ (for $i<\cf(\beta)$) such that
 \[\begin{array}{l}
   \big(\forall \delta\in C\cap (\alpha,\gamma_0]\big) \big(\exists\xi\in
   [\alpha,\delta) \big) \big(f_\delta(\xi)\in \psi_0(\xi) \big)\quad \mbox{
    and}\\
\ \\
     \big(\forall \delta\in C\cap (\gamma_i,\gamma_{i+1}]\big)
     \big(\exists\xi\in  [\gamma_i,\delta) \big) \big(f_\delta(\xi)\in 
     \psi_{i+1}(\xi) \big)\quad\mbox{  for  all }i<\cf(\beta). 
   \end{array}\]
Let $Z=\{f_{\gamma_i}(\gamma_0):0<i<\cf(\beta)\}\cup\psi_1( \gamma_0)$ (note
that $\cf(\beta)<\gamma_0<h(\gamma_0)$ so $|Z|<h(\gamma_0)$). Define 
$\psi\in\prod\limits_{\xi\in [\alpha,\beta)} \big[b(\xi)\big]^{\textstyle 
  {<}h(\xi)}$ by
  \begin{quotation}
\noindent  $\psi\rest [\alpha,\gamma_0)=\psi_0$,\ \  $\psi(\gamma_0)=Z$,  
 $\psi\rest (\gamma_0,\gamma_1)=\psi_1\rest (\gamma_0,\gamma_1)$ and 

\noindent $\psi\rest [\gamma_i,\gamma_{i+1})=\psi_{i+1}$ for
    $0<i<\cf(\beta)$. 
  \end{quotation}
Then $\psi$ will have the desired property.  
\end{proof}
\bigskip

Let $\forces_{\bbQ^h_b}\mbox{`` }\name{A}=\big\{\delta\in C:\big(\exists
T\in \name{G}\big) \big(f_\delta\subseteq \mrot(T)\big)\big\}\mbox{ ''}$. By 
Claim \ref{cl1} we know that $\forces |\name{A}|=\kappa$. Let
$\name{\tau}_0$ be a $\bbQ^h_b$--name for the increasing enumeration of
$\name{A}$ and let $\name{\tau}$ be such that
\[\forces_{\bbQ^h_b}\mbox{`` }\name{\tau}\in {}^\kappa\kappa\ \wedge\
  \big(\forall \xi<\kappa\big)\big(\name{\tau}(\xi)=
  \name{\tau}_0(\xi+1)\big) \mbox{ ''}\]

\begin{claim}
  \label{cl3}
  $\forces_{\bbQ^h_b}\mbox{`` }\big(\forall x\in {}^\kappa\kappa\cap\bV\big) 
  \big(\exists\alpha< \kappa\big)\big(\forall\xi>\alpha\big)\big(x(\xi)\leq
  \name{\tau}(\xi)\big)  \mbox{ ''}$ 
\end{claim}

\begin{proof}[Proof of the Claim]
Let $x\in {}^\kappa\kappa$ and let $S\in\bbQ^h_b$. Pick a strictly
increasing continuous sequence $\langle\gamma_i:i<\kappa\rangle\subseteq C$ 
such that $\lh(\mrot(S))<\gamma_0$ and $\big(\forall i<\kappa\big)
\big(\forall\xi< \gamma_i\big)\big(x(\xi)<\gamma_i\big)$. Use Claim
\ref{cl2} to build a function $\psi\in {\rm Loc}^{b,h}_\kappa$ such that 
\begin{enumerate}
\item[$(*)_3$]  $\big(\forall i<\kappa\big)\big(\forall\delta\in C\cap
  (\gamma_i,\gamma_{i+1}]\big) \big(\exists \xi\in [\gamma_i,\delta)\big)
  \big(f_\delta(\xi)\in \psi(\xi)\big)$.  
\end{enumerate}
Let $T\geq S$ be a condition from $\bbQ^h_b$ such that
\begin{enumerate}
\item[$(*)_4$] $\mrot(S)=\mrot(T)$ and
\item[$(*)_5$] if $t\in T$ then ($t\in S$ and) $\suc_T(t)=\suc_S(t)
  \setminus \psi\big(\lh(t)\big)$.  
\end{enumerate}
Then 
\[T\forces_{\bbQ^h_b} \mbox{`` }\name{A}\setminus (\gamma_0+1)\subseteq
  \big\{\gamma_j: j<\kappa\mbox{ is limit }\big\} \mbox{ ''}.\]
Consequently, for $\gamma_i\leq \xi<\gamma_{i+1}$, $1\leq i<\kappa$, we have 
\[T\forces_{\bbQ^h_b} \mbox{`` }\xi\leq \name{\tau}_0(\xi)=\gamma_j\mbox{
    for some  limit }j<\kappa\mbox{ ''}.\]
Hence $T\forces_{\bbQ^h_b} \mbox{`` }\name{\tau}(\xi) = \name{\tau}_0(\xi+1)
\geq \gamma_{i+1}\mbox{ ''}$. Thus $T\forces_{\bbQ^h_b} \mbox{``
}x(\xi)<\gamma_{i+1}\leq \name{\tau}(\xi)\mbox{ ''}$. Consequently,
$T\forces_{\bbQ^h_b} \mbox{`` }\big(\forall \xi\geq
  \gamma_0\big)\big(x(\xi)<\name{\tau}(\xi)\big)\mbox{ ''}$.
\end{proof}
\end{proof}
\medskip

The assertion stated in Corollary \ref{silly} below can be proven for
any weakly inaccessible cardinal $\kappa$. The respective forcing
could be obtained by $\kappa$--support iteration as in the proof of
\ref{silly}, but involving also cofinally often the Laver--like
forcing notion $\bbQ^{4,\bar{E}}$ of \cite[Definition
2.18]{RoSh:1001}. Then ${}^\kappa\kappa$--dominating functions
produced by $\name{\bbQ}^h_b$ would not be needed (as they would be
added by $\name{\bbQ}^{4,\bar{E}}$). Aesthetically, however, the model
described in \ref{silly} appears more intriguing.  

\begin{corollary}
  \label{silly}
  Suppose $\kappa$ is strongly inaccessible not Mahlo cardinal and
  $2^\kappa=\kappa^+$. Assume also that $\diamondsuit_\kappa$   holds
  true. Then there is a ${<}\kappa$--strategically complete $\kappa^{++}$--cc
  $\kappa$-proper forcing notion $\bbP$ such that 
  \[\forces_{\bbP} \mbox{`` } \big(\forall b,h\in \cR_\kappa\big)\big(h\leq
    b\ \Rightarrow\  \gb_\kappa^{b,h}(\leq^*)=\gd_\kappa(\leq^*) = 2^\kappa=
  \kappa^{++}\big)\mbox{ ''.}\]   
\end{corollary}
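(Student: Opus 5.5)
We will let $\bbP$ be the limit of a $\kappa$--support iteration $\bar{\bbQ}=\langle\bbP_\zeta,\name{\bbQ}_\zeta:\zeta<\kappa^{++}\rangle$ in which every $\name{\bbQ}_\zeta$ is of the form $\name{\bbQ}^{h_\zeta}_{b_\zeta}$ for some $\bbP_\zeta$--names $\name{b}_\zeta,\name{h}_\zeta$ for elements of $\cR_\kappa$ with $\name{h}_\zeta\leq\name{b}_\zeta$. The pairs are chosen by a bookkeeping function so that every pair $(b,h)$ appearing in any intermediate model appears cofinally often (indeed $\kappa^{++}$ times). The first task is to verify the general properties of $\bbP$. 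Strategic $({<}\kappa)$--completeness of each iterand is given by \cite[Proposition B.6.2]{RoSh:777} (Proposition \ref{q2isAb}(1)), and it is preserved by $\kappa$--support iterations. $\kappa$--properness of the iteration follows from \cite[Proposition 3.9(d), Theorem 4.1]{RoSh:1001} or \cite[Corollary 3.13]{RoSh:942}. For the $\kappa^{++}$--cc we use $2^\kappa=\kappa^+$: each $\bbQ^h_b$ has size $2^\kappa=\kappa^+$, and the iteration theorems of \cite{RoSh:1001,RoSh:942} give $\kappa^{++}$--cc for $\kappa$--support iterations of length $\leq\kappa^{++}$ of such forcings, via a $\Delta$--system argument on nice (hereditarily $\leq\kappa$) dense sets of conditions. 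This also gives $\forces 2^\kappa=\kappa^{++}$, since there are $\kappa^{++}$ nice names and the forcing adds $\kappa^{++}$ new functions.

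We must also make sure that the hypotheses of Proposition \ref{bad} survive at each stage $\zeta$. Since $\bbP_\zeta$ is strategically $({<}\kappa)$--complete, it adds no new bounded subsets of $\kappa$. Hence $\kappa$ stays strongly inaccessible and not Mahlo, because the club of singular cardinals remains a club. Moreover $\diamondsuit_\kappa$ is preserved, as the ground diamond sequence still guesses. For this last point we argue through $\kappa$--properness: any new $f\in\bairek$ and club are covered by names decided along a generic sequence of elementary submodels. It may be easier to cite the standard fact that $({<}\kappa)$--strategically complete, $\kappa$--proper forcings preserve $\diamondsuit_\kappa$, or alternatively to use the fact that $\diamondsuit_\kappa$ follows from $2^{<\kappa}=\kappa$ plus the existence of a stationary set of regular-or-singular points. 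I expect this preservation step to be the main obstacle. If the direct argument fails, the fallback is to reprove Claims \ref{cl1}--\ref{cl3} using only $\diamondsuit$ in the ground model $\bV$, applied to names: Claim \ref{cl1} requires only guessing branches of trees lying in the current model.

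Now work in $\bV^{\bbP}$. The $\kappa^{++}$--cc gives that every $f\in\bairek$, every family of size $\leq\kappa^+$, and every pair $b,h\in\cR_\kappa$ already appear in some $\bV^{\bbP_\zeta}$ with $\zeta<\kappa^{++}$. For the dominating number: by Proposition \ref{bad} applied in $\bV^{\bbP_\zeta}$, each stage adds a function dominating ${}^\kappa\kappa\cap\bV^{\bbP_\zeta}$. Hence any family of size $\leq\kappa^+$ is dominated, and so $\gd_\kappa(\leq^*)\geq\kappa^{++}$, which equals $2^\kappa$.

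For the bounding number $\gb^{b,h}_\kappa$ with $h\leq b$ (the statement's $\gb^{b,h}_\kappa(\leq^*)$, read as the slalom-avoidance number $\gb^{b,h}_\kappa(\cancel{\ni^\infty})$): take a family $\cF\subseteq{\rm Loc}^{b,h}_\kappa$ of size $\kappa^+$. It lies in some $\bV^{\bbP_\zeta}$ where $(b,h)$ is handled at a later stage $\zeta'\geq\zeta$. There $\bbQ^h_b$ adds $W\in\prod_\alpha b(\alpha)$ with $\varphi\cancel{\ni^\infty}W$ for all $\varphi\in\cF$. So $\cF$ is bounded, and $\gb^{b,h}_\kappa\geq\kappa^{++}$. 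The inequality $\leq 2^\kappa$ is trivial, which completes the proof.
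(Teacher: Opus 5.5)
Your proposal is correct and follows essentially the same route as the paper. Both use a bookkept $\kappa$--support iteration of the forcings $\bbQ^{h}_{b}$ of length $\kappa^{++}$, take strategic completeness, $\kappa$--properness and the $\kappa^{++}$--cc from the iteration theorems of \cite{RoSh:1001}, and note that non-Mahloness and $\diamondsuit_\kappa$ survive to every intermediate model, so that Proposition \ref{bad} adds a dominating function at every stage. The paper cites exactly the fact you name: strategically $({<}\kappa)$--complete forcing preserves $\diamondsuit_\kappa$.

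One caution: drop your suggested alternative that $\diamondsuit_\kappa$ follows from $2^{<\kappa}=\kappa$ plus a stationarity condition. At an inaccessible $\kappa$ this is false, since diamond can fail there, as the paper recalls. It is also unnecessary: play the completeness game of length $\kappa$ while deciding initial segments of the names, and let the ground-model diamond guess the decided set. That already proves preservation, with no appeal to $\kappa$--properness.
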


 \begin{proof}
   Set up a $\kappa$--support iteration $\langle \bbP_\xi,\name{\bbQ}_\xi:
   \xi<\kappa^{++}\rangle$ such that
\begin{enumerate}
\item[(a)]  for every $\xi<\kappa^{++}$, for some $\bbP_\xi$--names
  $\name{b}_\xi, \name{h}_\xi$  for elements of $\cR_\kappa$ (in
  $\bV^{\bbP_\xi}$) we have $\forces_{\bbP_\xi}  \name{\bbQ}_\xi=
  \bbQ^{\name{h}_\xi}_{\name{b}_\xi}$; 
\item[(b)] for every $\bbP_{\kappa^{++}}$--names $\name{b},\name{h}$ for elements of
  $\cR_\kappa$ (in $\bV^{\bbP_{\kappa^{++}}}$) such that $\name{h}\leq
  \name{b}$ there are cofinally many
  $\xi<\kappa^{++}$ such that $\forces_{\bbP_{\kappa^{++}}}$``
  $\name{b}=\name{b}_\xi$ and $\name{h}=\name{h}_\xi$ ''.  
\end{enumerate}
It is possible to carry out the construction, because demand (a) and
\cite[Proposition 3.9(d), Theorem 4.1, Proposition
3.8(6)]{RoSh:1001} guarantee that each $\bbP_\xi$ will be
strategically  $({<}\kappa)$--complete and $\kappa$--proper. Thus by 
\cite[Theorem 1.3]{RoSh:1001} we know that $\forces_{\bbP_\xi}
2^\kappa=\kappa^+$ (for each $\xi<\kappa^{++}$) and $\bbP_{\kappa^{++}}$ 
is $\kappa^{++}$--cc. The rest of the argument follows the usual
pattern. (Remember, by Proposition \ref{bad}, cofinally often we will add a
${}^\kappa\kappa$--dominating function, as $\diamondsuit_\kappa$ is
preserved by $({<}\kappa)$--strategically complete forcing.) 
 \end{proof}
   
The assumptions on $\kappa$ in Proposition \ref{bad}
can be possibly weaken, but there are limits on how far.

\begin{proposition}
  \label{wcomp}
Assume $\kappa$ is weakly compact, $b,h\in\cR_\kappa$ and $h\leq b$. Then
the forcing notion $\bbQ^h_b$ does not add any $\leq^*$--dominating function
in ${}^\kappa\kappa$.    
\end{proposition}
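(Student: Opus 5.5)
The plan is to show directly that no name is forced to dominate $\bV\cap{}^\kappa\kappa$. Fix a $\bbQ^h_b$--name $\name{\tau}$ for an element of ${}^\kappa\kappa$ and a condition $S\in\bbQ^h_b$. I will produce $T\geq S$, a ground model function $x\in{}^\kappa\kappa$ and an unbounded set $E\subseteq\kappa$ in $\bV$ such that
\[T\forces_{\bbQ^h_b}\mbox{`` }\big(\forall\delta\in E\big)\big(\name{\tau}(\delta)<x(\delta)\big)\mbox{ ''}.\]
This suffices, since then $T\forces$``$x\not\leq^*\name{\tau}$''. This is consistent with Proposition \ref{addunb}: there is no claim of bounding on a club, only on some unbounded set. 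Note that the construction in Proposition \ref{bad} used $\diamondsuit_\kappa$ only along a club of singular cardinals. At a weakly compact $\kappa$ the set $E$ will instead be a set of inaccessible $\delta$ that are closure points of $b$ and $h$, which is stationary, and that difference is what I intend to exploit.

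\emph{Key step 1 (pure bounding lemma).} For every $T\in\bbQ^h_b$ and every $\bbQ^h_b$--name $\name{\sigma}$ for an ordinal below $\kappa$ there are $T'\geq_\pr T$ and $\gamma<\kappa$ with $T'\forces\name{\sigma}<\gamma$. I would try the usual rank/pruning argument. Call a node $t\in T$ above $\mrot(T)$ \emph{good} if some tree $T''\geq_\pr(T)_t$ with root $t$ forces a bound on $\name{\sigma}$. If the set of good successors of $t$ has complement in $b(\lh(t))$ of size ${<}h(\lh(t))$, then $t$ is good: keep only the good successors, and take the supremum of the ${\leq}b(\lh(t))<\kappa$ many bounds. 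Otherwise $t$ has at least $h(\lh(t))$ many bad successors. If $\mrot(T)$ were bad, we would get a subtree of bad nodes, each splitting into $\geq h$ many bad successors. This subtree need not be a condition, so it cannot be extended to decide $\name{\sigma}$ directly.

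This is where weak compactness enters. I would code $T$, $\name{\sigma}$ and the bad-subtree by subsets of $V_\kappa$, and apply $\Pi^1_1$--indescribability to reflect the statement ``every condition below $\mrot(T)$ has an extension deciding $\name{\sigma}$, while the root is bad'' down to an inaccessible $\lambda<\kappa$ closed under $b$ and $h$. At such a $\lambda$ the restricted tree $T\cap\bigcup_{\alpha<\lambda}\prod_{\beta<\alpha}b(\beta)$ is a condition in the $\lambda$--version of the forcing. The bad-subtree then has (by the tree property at $\lambda$, or a counting argument using $|\prod_{\beta<\alpha}b(\beta)|<\lambda$) a branch through level $\lambda$ whose extensions should be forced to decide $\name{\sigma}$ below $\lambda$, contradicting badness. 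I expect this reflection step to be the main obstacle. If it fails in this form, the fallback is a weaker lemma: bound $\name{\sigma}$ by $\gamma$ only after extending the root to any node of a fixed level $\alpha$, with $\gamma$ depending on $\alpha$ but not on the node. That weaker form is still enough for the fusion below.

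\emph{Key step 2 (fusion).} Build a $\geq_\pr$--increasing fusion sequence $\langle T_i:i<\kappa\rangle$ above $S$, using strategic $({<}\kappa)$--completeness (Proposition \ref{q2isAb}(1)) at limits. Choose an increasing sequence $\delta_i\in E$, where $E$ consists of the inaccessible closure points. At stage $i$, keep the tree fixed below level $\delta_i$ and, for each of the fewer than $\kappa$ nodes $t\in T_i$ of length $\delta_i$ (fewer because $\delta_i$ is inaccessible and $b\rest\delta_i$ is bounded below $\delta_i$), apply Key step 1 to $(T_i)_t$ and $\name{\tau}(\delta_i)$ to get a bound $\gamma_t$. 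Amalgamate these pure extensions into $T_{i+1}$ and put $x(\delta_i)=\sup_t\gamma_t+1<\kappa$, with $x$ arbitrary elsewhere. The fusion limit $T$ is a condition, and it forces $\name{\tau}(\delta_i)<x(\delta_i)$ for all $i<\kappa$, giving the desired $x$ and $E=\{\delta_i:i<\kappa\}$.
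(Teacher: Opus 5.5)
\paragraph{Key step 1 is false.} It fails at every inaccessible $\kappa$, weakly compact or not, so no reflection argument can establish it. Take $A_\alpha\subseteq b(\alpha)$ with $|A_\alpha|=|b(\alpha)\setminus A_\alpha|=b(\alpha)$ as in the proof of Proposition \ref{addunb}. Fix $T$, put $\alpha_0=\lh(\mrot(T))$, and let $\name{\sigma}$ be the least $\alpha\geq\alpha_0$ with $\name{W}(\alpha)\in A_\alpha$. This is forced to be below $\kappa$, because co-small successor sets meet $A_\alpha$.

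Now let $T'\geq_\pr T$ and $\gamma<\kappa$. Each $\suc_{T'}(t)$ above the root also meets $b(\lh(t))\setminus A_{\lh(t)}$. Taking unions at limits, there is $t\in T'$ of length $\gamma$ with $t(\alpha)\notin A_\alpha$ for all $\alpha\in[\alpha_0,\gamma)$, and then $(T')_t\forces\name{\sigma}\geq\gamma$. So no pure extension of $T$ bounds $\name{\sigma}$.

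In your good/bad analysis, the bad nodes here are exactly those avoiding the $A_\alpha$'s. This bad subtree is even closed at limits, but its successor sets are only of size $\geq h$, not co-small. That, and not the limit levels, is the real obstruction. Your fallback is refuted by the same example. Once the extensions of the cones $(T)_t$ at a fixed level are amalgamated into a pure extension of $T$, as Key step 2 needs, you again have a pure extension of $T$ bounding $\name{\sigma}$.

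\paragraph{The overall target is unattainable.} Your remark that it is consistent with Proposition \ref{addunb} is mistaken. Your construction is meant to handle every name, so take $\name{\tau}=g_{\name{W}}$ from that proof. Let $T$, $x\in{}^\kappa\kappa$ and an unbounded $E$ in $\bV$ be given. Pick $\delta\in E$ above $\lh(\mrot(T))$ and a node $t\in T$ of length $\delta+1$. Extend $t$ inside $T$ to length $\max(x(\delta),\delta+1)$ while avoiding the $A_\alpha$'s; the resulting cone forces $g_{\name{W}}(\delta)\geq x(\delta)$. Hence no condition forces $\name{\tau}(\delta)<x(\delta)$ for all $\delta$ in a ground-model unbounded set. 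The coordinates where $\name{\tau}$ falls below $x$ must depend on the generic, and no fusion of pure extensions can produce them.

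\paragraph{What the paper does instead.} It uses weak compactness through the tree property. Let $\cA$ be the dense set of roots of conditions deciding $\name{f}(\xi)$. The restrictions $T\rest\zeta$ of pure extensions $T\geq_\pr S$ that miss $\cA$ form a tree with levels of size $<\kappa$ and no $\kappa$--branch. So this tree has bounded height, and every pure extension of $S$ meets a fixed $\cB\in[\cA]^{<\kappa}$ (Claim \ref{cl4}). Let $g(\xi)$ exceed the values decided at roots in the sets $\cB_\xi$; these values are unique per root, since conditions with a common root are compatible. A density argument that \emph{moves the root} into $\cB_\xi$ then shows $\forces(\forall\alpha<\kappa)(\exists\beta>\alpha)(\name{f}(\beta)<g(\beta))$.
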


\begin{proof}
Let   $\cX=\bigcup\limits_{\alpha<\kappa} \prod\limits_{\beta<\alpha} 
  b(\beta)$. We say that a set $\cA\subseteq \cX$ is {\em
    $\bbQ^h_b$--dense\/} if $T\cap \cA\neq \emptyset$ for all
  $T\in \bbQ^h_b$. Note that if $\name{\tau}$ is a $\bbQ^h_b$--name for an
  ordinal and $\cA$ consists of all $t\in\cX$ such that some condition
  $T\in \bbQ^h_b$ with $\mrot(T)=t$ decides the value of $\name{\tau}$, then
  $\cA$ is $\bbQ^h_b$--dense.

  \begin{claim}
    \label{cl4}
Suppose $\cA\subseteq\cX$ is $\bbQ^h_b$--dense and $S\in \bbQ^h_b$. Then
there is $\cB\in [\cA]^{\textstyle {<}\kappa}$ such that for every
$T\geq_\pr S$, $T\cap \cB\neq \emptyset$. 
  \end{claim}

  \begin{proof}[Proof of the Claim]
For $S\in\bbQ^h_b$ and $\xi<\kappa$ let $S\rest\xi=\{s\rest\xi: s\in S\}$
(so this is a tree of height $\xi$). Let $\cT$ consist of all trees
$T\rest\xi$ such that $T\geq_\pr S$ and $T\rest \xi\cap\cA=\emptyset$.  

Equipped with the end-extension relation, $\cT$ forms a tree without any
$\kappa$--branch but with levels of size $<\kappa$. Therefore, by the weak
compactness of $\kappa$, the height $\hight(\cT)$ of $\cT$ must be smaller than
$\kappa$.  Put $\cB=\{t\in\cA:\lh(t)\leq \hight(\cT)+1\}$.  
  \end{proof}

 Now suppose $\name{f}$ is a $\bbQ^h_b$--name for an element of
 ${}^\kappa\kappa$. For $\xi<\kappa$  let $\cA_\xi$ be the set of all
 $t\in\cX$ such that there is a condition with root $t$ deciding the value
 of $\name{f}(\xi)$. Use Claim \ref{cl4} to choose inductively $\cB_\xi\in
 [\cA_\xi]^{\textstyle{<}\kappa}$ and $\alpha_\xi<\kappa$ so that
 \begin{enumerate}
 \item[$(\otimes)_1$] $\langle \alpha_\xi:\xi<\kappa\rangle$ is continuously increasing,
 \item[$(\otimes)_2$] $\cB_\xi\subseteq \cA_\xi\cap
   \bigcup\limits_{\alpha<\alpha_{\xi+1}} \prod\limits_{\beta<\alpha} b(\beta)$,
 \item[$(\otimes)_3$]  if $t\in \prod\limits_{\beta<\alpha_\xi} b(\beta)$
   and $S\in \bbQ^h_b$, $\mrot(S)=t$, then $\cB_\xi\cap S\neq \emptyset$.
 \end{enumerate}
Let $g\in {}^\kappa\kappa$ be such that for each $\xi<\kappa$ and $t\in
\cB_\xi$, if $S\in \bbQ^h_b$ satisfies $\mrot(S)=t$ and $S\forces
\name{f}(\xi)=\gamma$, then $\gamma<g(\xi)$. It should be clear by our
choices that

$\forces_{\bbQ^h_b}\mbox{`` } \big(\forall\alpha<\kappa\big)\big(
  \exists\beta> \alpha\big)\big(\name{f}(\beta)<g(\beta)\big)\mbox{ ''.}$
\end{proof}

\begin{remark}
The proof of Proposition \ref{wcomp} suggests a modification of
semi--properness of \cite{RoSh:942} that could be used to show that the
$\kappa$--support iteration of $\bbQ^h_b$ does not add dominating
reals. This in turn could produce a model for ``$\gb_\kappa(\leq^*)<
\gb^{b,h}_\kappa(\cancel{\ni^\infty})$''. However, for this approach we
would need to ensure that the weak compactness of $\kappa$ is preserved in
the iteration, so much larger cardinals needed.
\end{remark}

The proof of Proposition \ref{bad} contains an interesting combinatorial fact.

\begin{theorem}
  \label{re125two}
Suppose $\kappa$ is strongly inaccessible not Mahlo and
$\diamondsuit_\kappa$ holds true. Let $b\in\cR^+_\kappa$. Then
$\gd_\kappa(\leq^*)\leq\gd^{b,{\rm id}^+}_\kappa(\cancel{\ni^\infty}) \leq
\gd^{b,b}_\kappa(\cancel{\ni^\infty})=\gd_\kappa^b(\leq^*)$.  
\end{theorem}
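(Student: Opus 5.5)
The plan is to prove the three comparisons separately. The middle inequality and the final equality are soft. The first inequality reuses the combinatorics from the proof of Proposition \ref{bad}, with a fixed function $f$ in place of the generic branch. Throughout, ${\rm id}^+$ is the function $\alpha\mapsto|\alpha|^+$. Since $b\in\cR^+_\kappa$, each $b(\alpha)$ is a regular cardinal with $\alpha<b(\alpha)$, so $|\alpha|^+\leq b(\alpha)$.

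\emph{Middle inequality.} Since ${\rm id}^+\leq b$, we have ${\rm Loc}^{b,{\rm id}^+}_\kappa\subseteq{\rm Loc}^{b,b}_\kappa$. Hence a family $\cG\subseteq\prod_{\alpha<\kappa}b(\alpha)$ such that every slalom in ${\rm Loc}^{b,b}_\kappa$ is eventually avoided by some member of $\cG$ has the same property for the narrower slaloms. This gives $\gd^{b,{\rm id}^+}_\kappa(\cancel{\ni^\infty})\leq\gd^{b,b}_\kappa(\cancel{\ni^\infty})$.

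\emph{Equality.} Because $b(\alpha)$ is regular, every $\varphi(\alpha)\in[b(\alpha)]^{<b(\alpha)}$ is bounded in $b(\alpha)$. To get $\gd^{b,b}_\kappa(\cancel{\ni^\infty})\leq\gd^b_\kappa(\leq^*)$, let $\cG$ be $\leq^*$-dominating in $\prod b$ and pass to $\{g+1:g\in\cG\}$, which is still inside $\prod b$ as $b(\alpha)$ is a limit ordinal. Given $\varphi$, let $g_\varphi(\alpha)=\sup\varphi(\alpha)+1<b(\alpha)$ and pick $g\in\cG$ with $g_\varphi\leq^* g$. Then $g+1$ eventually avoids $\varphi$.

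For the reverse inequality, given $x\in\prod b$ consider the slalom $\varphi_x(\alpha)=x(\alpha)+1=\{\zeta:\zeta\leq x(\alpha)\}$, which has size $<b(\alpha)$. If $f$ eventually avoids $\varphi_x$, then $x<^* f$. So any witness family for $\cancel{\ni^\infty}$ is $\leq^*$-dominating in $\prod b$.

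\emph{First inequality.} Fix the club $C$ of singular cardinals (it exists since $\kappa$ is not Mahlo) and the sequence $\langle f_\delta:\delta\in C\rangle$ with $(*)_1,(*)_2$, as in the proof of Proposition \ref{bad}. First I will check that Claim \ref{cl2} holds with $h={\rm id}^+$. Its proof only uses $|Z|\leq\cf(\beta)+|\psi_1(\gamma_0)|<h(\gamma_0)$. With $h(\gamma_0)=|\gamma_0|^+$ and $\cf(\beta)<\gamma_0$ this still holds, since all the sets involved have size $\leq|\gamma_0|$; the successor steps use singletons.

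For $f\in\prod b$ let $A_f=\{\delta\in C:f_\delta\subseteq f\}$, which is stationary by $(*)_2$. Let $\tau_f(\xi)$ be the $(\xi+1)$-th element of the increasing enumeration of $A_f$. Now take a family $\cG$ witnessing $\gd^{b,{\rm id}^+}_\kappa(\cancel{\ni^\infty})$. I claim $\{\tau_f:f\in\cG\}$ is $\leq^*$-dominating in ${}^\kappa\kappa$.

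Given $x\in{}^\kappa\kappa$, choose a continuous increasing $\langle\gamma_i:i<\kappa\rangle\subseteq C$ such that $x(\xi)<\gamma_i$ for $\xi<\gamma_i$. Glue the functions from Claim \ref{cl2} on the intervals $[\gamma_i,\gamma_{i+1})$ into $\psi\in{\rm Loc}^{b,{\rm id}^+}_\kappa$ satisfying $(*)_3$. Pick $f\in\cG$ with $\psi\;\cancel{\ni^\infty}\;f$ from some $\zeta$ on. For $\gamma_i\geq\zeta$ and $\delta\in C\cap(\gamma_i,\gamma_{i+1}]$ there is $\xi\in[\gamma_i,\delta)$ with $f_\delta(\xi)\in\psi(\xi)\not\ni f(\xi)$, so $f_\delta\not\subseteq f$. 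Hence $A_f$ above $\zeta$ consists only of $\gamma_j$ with $j$ limit.

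The computation at the end of Claim \ref{cl3} then gives $x(\xi)<\gamma_{i+1}\leq\tau_f(\xi)$ for all large $\xi\in[\gamma_i,\gamma_{i+1})$. Here $\tau_f$ skips one element of $A_f$ precisely to jump past $\gamma_{i+1}$.

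The main obstacle I expect is this last step: verifying that the Claim \ref{cl2} induction really goes through with the narrow width $|\xi|^+$, and handling the finitely many initial indices below $\zeta$, which may only make the domination start later.
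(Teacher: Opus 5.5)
Your proposal is correct and is essentially the paper's proof: the same diamond sequence on a club of singular cardinals, Claim \ref{cl2} with width ${\rm id}^+$, and the enumeration of $A_f$ shifted by one. There are two differences. First, you work with eventual avoidance directly and let the domination start above $\zeta$, whereas the paper first uses $\kappa^{<\kappa}=\kappa$ to pass to a witness family whose members avoid each slalom everywhere (hence its ``(sic!)''). Second, you write out the middle inequality and the final equality, which the paper leaves implicit.

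One small point you inherit from Claim \ref{cl2} when you say you have checked it: in its limit case, $Z$ must also contain $f_\beta(\gamma_0)$, since otherwise $\delta=\beta$ itself is never caught. Adding it keeps $|Z|\leq|\gamma_0|<|\gamma_0|^+$, so your argument is unaffected.
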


\begin{proof}
  Let $C\subseteq \kappa$ be a club consisting of singular cardinals
  only. Let $\langle f_\delta:\delta\in C\rangle$ be such that
  \begin{enumerate}
\item[$(*)_1$] $f_\delta\in \prod\limits_{\alpha<\delta} b(\alpha)$ for each
    $\delta\in C$, and
\item[$(*)_2$] for each $f\in \prod\limits_{\alpha<\kappa} b(\alpha)$ the set
  $\{\delta\in C: f_\delta\subseteq f\}$ is stationary. 
\end{enumerate}
Since $\kappa^{<\kappa}=\kappa$ and $\gd^{b,{\rm
    id}^+}_\kappa(\cancel{\ni^\infty}) \geq\kappa^+$, we may fix a
$\cancel{\ni}$--dominating (sic!) family $\cZ\subseteq \prod\limits_{\alpha<\kappa}
b(\alpha)$ of size $\gd^{b,{\rm id}^+}_\kappa(\cancel{\ni^\infty})$.

For $f\in\cZ$ let $x_f\in {}^\kappa\kappa$ be such that $x_f(\xi)$ is
the $(\xi+1)^{\rm st}$ element of the set $\{\delta\in C:f\rest\delta
=f_\delta\}$. We will argue that $\{x_f:f\in\cZ\}$ is a
$\leq^*$--dominating family in ${}^\kappa\kappa$.

Given $x\in {}^\kappa\kappa$. Pick a strictly increasing continuous sequence
$\langle\gamma_i:i<\kappa\rangle\subseteq C$ such that $\big(\forall
i<\kappa\big) \big(\forall\xi< \gamma_i\big)\big(x(\xi)< \gamma_i\big)$. Use
Claim \ref{cl2} to build a function $\psi\in\prod\limits_{\xi<\kappa}
\big[b(\xi)\big]^{\textstyle {\leq} |\xi|}$ such that 
\[\big(\forall i<\kappa\big)\big(\forall\delta\in C\cap
  (\gamma_i,\gamma_{i+1}]\big) \big(\exists \xi\in [\gamma_i,\delta)\big)
  \big(f_\delta(\xi)\in\psi(\xi)\big).\]  
Let $f\in\cZ$ be $\cancel{\ni}$--above $\psi$, that is $(\forall\xi<\kappa)(
f(\xi)\notin\psi(x))$. Then
\[\{\delta\in C\setminus (\gamma_0+1): f\rest\delta=f_\delta\}\subseteq
  \{\gamma_j: j<\kappa\mbox{ is limit }\}.\]
Consequently, for $\gamma_i\leq \xi<\gamma_{i+1}$, $1\leq i<\kappa$, the
value of $x_f(\xi)$ is $\gamma_j$ for some limit $j>i$. Hence
$x(\xi)<\gamma_{i+1}<x_f(\xi)$. 
\end{proof}

Unpublished old result of Woodin showed that, consistently, diamond may
fail at inaccessible cardinals. This result was improved and elaborated by
several authors, for instance Golshani \cite{Gol22} showed that the failure
of diamond may happen at the first inaccessible cardinal. However those
arguments require starting with substantially larger cardinal (\cite{Gol22}
starts with $\kappa$ which is $(\kappa+3)$--strong). Therefore the results
of the current section suggest that for any consistency results required by
\cite[Question 125]{vdV25} we may have to start with larger cardinals than
just inaccessible.  

\begin{problem}
  \label{prob1}
  \begin{enumerate}
  \item  Is it consistent that $\kappa$ is the first inaccessible cardinal and 
    $\gd_\kappa^b(\leq^*)<2^\kappa$ for all $b\in\cR_\kappa^+$?
  \item Is it consistent  for the first inaccessible $\kappa$, that 
 $\gd_\kappa^{b,h}(\leq^*)<\gd_\kappa(\leq^*)$ for all (or some) 
 $b,h\in\cR_\kappa^+$?     
  \end{enumerate}
  \end{problem}

\section{Forcing for  $\leq^\cl$ and $\cancel{\ni^{\rm stat}}$} 

\begin{definition}
[{\cite[Definition 3.1]{RoSh:860}}]
  \label{p.1A}
  Let $\bbQ$ be a strategically $({<}\kappa)$--complete forcing notion
  and $d\in {}^\kappa(\kappa+1)$ be such that $d(\alpha)$ is a
  cardinal number for all $\alpha<\kappa$.
\begin{enumerate}  
\item For a condition $p\in\bbQ$ we define a game $\Agame(p,\bbQ)$ between
two players, Generic and Antigeneric, as follows. A play of $\Agame(p,\bbQ)$
lasts $\kappa$ steps and during a play a sequence     
\[\Big\langle I_\alpha,\langle p^\alpha_t,q^\alpha_t:t\in I_\alpha\rangle:
\alpha<\kappa\Big\rangle\]
is constructed. At a stage $\alpha<\kappa$ of the game:
\begin{enumerate}
\item[$(\aleph)_\alpha$]  first Generic chooses a non-empty set $I_\alpha$
of cardinality $<d(\alpha)$ and a system $\langle p^\alpha_t:t\in I_\alpha
\rangle$ of conditions from $\bbQ$,
\item[$(\beth)_\alpha$]  then Antigeneric answers by picking a system 
$\langle q^\alpha_t:t\in I_\alpha\rangle$ of conditions from $\bbQ$ such that 
$(\forall t\in I_\alpha)(p^\alpha_t\leq q^\alpha_t)$. 
\end{enumerate}
At the end, Generic wins the play 
\[\Big\langle I_\alpha,\langle p^\alpha_t,q^\alpha_t:t\in I_\alpha\rangle:
\alpha<\kappa\Big\rangle\]  
of $\Agame(p,\bbQ)$ \quad if and only if 
\begin{enumerate}
\item[$(\circledast)^{\rm rc}_{\rm A}$] there is a condition $p^*\in\bbQ$
stronger than $p$ and such that
\[p^*\forces_{\bbQ}\mbox{`` }\big(\forall\alpha<\kappa\big) \big(\exists
  t\in I_\alpha \big)\big(q^\alpha_t\in\name{G}_{\bbQ}\big)\mbox{ ''}.\]
\end{enumerate}
\item For a condition $p\in\bbQ$ we define a game $\agame(p,\bbQ)$ between 
Generic and Antigeneric as follows. A play of $\agame(p,\bbQ)$ lasts
$\kappa$ steps and during a play a sequence      
\[\Big\langle \zeta_\alpha,\langle p^\alpha_\xi,q^\alpha_\xi:\xi<
\zeta_\alpha\rangle:\alpha<\kappa\Big\rangle\]
is constructed. Suppose that the players have arrived to a stage $\alpha<
\kappa$ of the game. Now, Generic chooses a non-zero ordinal
$\zeta_\alpha<d(\alpha)$ and then the two players play a subgame of length  
$\zeta_\alpha$ alternately choosing successive terms of a sequence 
$\langle p^\alpha_\xi,q^\alpha_\xi:\xi<\zeta_\alpha\rangle$. At a stage
$\xi<\zeta_\alpha$ of the subgame, first Generic picks a condition
$p^\alpha_\xi\in\bbQ$ and then Antigeneric answers with a condition
$q^\alpha_\xi$ stronger than $p^\alpha_\xi$.

At the end, Generic wins the play 
\[\Big\langle \zeta_\alpha,\langle p^\alpha_\xi,q^\alpha_\xi:\xi<
\zeta_\alpha\rangle:\alpha<\kappa\Big\rangle\]  
of $\agame(p,\bbQ)$\quad if and only if 
\begin{enumerate}
\item[$(\circledast)^{\rm rc}_{\bf a}$] there is a condition $p^*\in\bbQ$ 
stronger than $p$ and such that  
\[p^*\forces_{\bbQ}\mbox{`` }\big(\forall\alpha<\kappa\big) \big(\exists 
  \xi<\zeta_\alpha\big)\big(q^\alpha_\xi\in\name{G}_{\bbQ}\big) \mbox{
    ''}.\]  
\end{enumerate}
\item We say that a forcing notion $\bbQ$ is {\em reasonably A--bounding 
over $d$\/} if    
\begin{enumerate}
\item[(a)] $\bbQ$ is strategically $({<}\kappa)$--complete,  and 
\item[(b)] for any $p\in\bbQ$, Generic has a winning strategy in the game
$\Agame(p,\bbQ)$. 
\end{enumerate}
In an analogous manner we define when the forcing notion $\bbQ$ is {\em
reasonably {\bf a}--bounding over $d$} --- just using the game $\agame
(p,\bbQ)$ appropriately.  

If $d(\alpha)=\kappa$ for each $\alpha<\kappa$, then we may omit
$d$ and say {\em reasonably A--bounding\/} etc.
\end{enumerate}  
\end{definition}

\begin{proposition}
  [See {\cite[Proposition 4.2]{RoSh:860}}]
\label{bound}
Let $\kappa$ be strongly inaccessible and $d\in {}^\kappa  (\kappa+1)$.
\begin{enumerate}
\item ``Reasonably {\bf A}--bounding'' implies ``reasonably {\bf 
    a}--bounding'', and this in turn implies ``$\kappa$--proper and
  $\bairek$--bounding''.  

\item If $\bbQ$ is reasonably {\bf a}--bounding over $d$, and
$\name{\tau}$ is a $\bbQ$--name for an element of $\bairek$, then there
  are a condition $q\geq p$ and a slalom $\psi\in {\rm
    Loc}^{\bar{\kappa},d}_\kappa$ such that  $q\forces_{\bbQ}$``
  $\big(\forall\alpha<\kappa \big) \big (\name{\tau}(\alpha)\in \psi(\alpha)
  \big)$ ''. 
\end{enumerate}
\end{proposition}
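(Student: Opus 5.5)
For (1), the first implication should be a direct strategy translation. Fix a winning strategy $\mathbf{st}$ for Generic in $\Agame(p,\bbQ)$ and a winning strategy $\mathbf{st}_0$ for Complete in $\Game_0^\kappa(\bbQ)$. At stage $\alpha$ of $\agame(p,\bbQ)$, let $\mathbf{st}$ propose $I_\alpha$ and $\langle p^\alpha_t:t\in I_\alpha\rangle$. Generic sets $\zeta_\alpha=|I_\alpha|<d(\alpha)$, enumerates $I_\alpha$ as $\langle t_\xi:\xi<\zeta_\alpha\rangle$, and at substage $\xi$ plays $p^\alpha_{t_\xi}$. The opponent's answers then give a legal response $\langle q^\alpha_t:t\in I_\alpha\rangle$ in the A-game, so Generic can keep simulating $\mathbf{st}$. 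The winning condition $(\circledast)^{\rm rc}_{\rm A}$ for the simulated play is literally $(\circledast)^{\rm rc}_{\bf a}$ for the actual play. Since the A-game asks for less interaction, only this bookkeeping is needed.

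For the second implication of (1) I would follow the scheme in \cite{RoSh:860}. To get $\bairek$-bounding, let $\name{\tau}$ be a name for an element of $\bairek$. I would have Generic play the {\bf a}-game with $d$ replaced by $\bar\kappa$, so that $\zeta_\alpha<\kappa$ is allowed. At stage $\alpha$ she runs a subgame in which every $q^\alpha_\xi$ decides $\name{\tau}(\alpha)$. The main obstacle is that the answers within a stage must be ``spread out'' so that the $q^\alpha_\xi$ witness many possible values rather than one branch. The winning strategy handles this: Generic's moves $p^\alpha_\xi$ come from her strategy, and Antigeneric's replies can be strengthened to decide $\name{\tau}(\alpha)$, which is legal because the opponent may choose any stronger condition. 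If $p^*$ witnesses the win, then $p^*$ forces that $\name{\tau}(\alpha)$ lies in the set $\psi(\alpha)$ of the fewer than $d(\alpha)$ values decided by the $q^\alpha_\xi$. This gives (2) immediately, and with $d=\bar\kappa$ the bound $\sup\psi(\alpha)<\kappa$ yields bounding.

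The real subtlety is that Antigeneric's moves are adversarial, not mine. I therefore plan to argue by considering the particular play in which the opponent plays deciding extensions, and to use that winning holds against every opponent.

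For $\kappa$-properness, given $N$ with ${}^{<\kappa}N\subseteq N$ and $q\in N$, I would enumerate the $\kappa$ names for ordinals in $N$ as $\langle\name{\tau}_\alpha:\alpha<\kappa\rangle$. I would then play the game inside $N$ using a strategy from $N$, with Antigeneric's replies chosen in $N$ to decide $\name{\tau}_\alpha$. Closure of $N$ under ${<}\kappa$-sequences keeps each stage inside $N$, so the decided values lie in $N$, and the final $p^*$ is $(N,\bbQ)$-generic.
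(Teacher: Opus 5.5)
Your proposal is correct and is the standard argument. The paper gives no proof of Proposition~\ref{bound} of its own, only the citation \cite[Proposition 4.2]{RoSh:860}. The argument runs as follows: simulate the {\bf A}--strategy inside the {\bf a}--game. For (2), for bounding and for $\kappa$--properness, let Antigeneric answer with conditions deciding the relevant name (conditions from $N$, in the last case), so that $(\circledast)^{\rm rc}_{\bf a}$ traps the value among the at most $|\zeta_\alpha|<d(\alpha)$ decided ones.

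The remaining remarks are cosmetic:
\begin{enumerate}
\item There is no ``spreading out'' obstacle.
\item There is no need to pass to $\bar{\kappa}$: the given strategy already has $\zeta_\alpha<d(\alpha)\leq\kappa$, which yields (2) and, by regularity of $\kappa$, bounding.
\item For $\kappa$--properness, put $d$ into the parameter $x$, so that by elementarity a winning strategy for $\agame(q,\bbQ)$ lies in $N$.
\end{enumerate}
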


\begin{theorem}
[See {\cite[Theorems 3.1 and 3.2]{RoSh:860}}] 
  \label{verA}
  Assume that
\begin{enumerate}
\item[(a)] $\kappa$ is a strongly inaccessible cardinal, $d\in {}^\kappa
  (\kappa+1)$ and 
\item[(b)] each $d(\alpha)$ is an infinite regular cardinal satisfying
  \[\big(\forall f\in
{}^\alpha d(\alpha)\big)\big(\big|\prod_{\xi<\alpha} f(\xi)\big|<
d(\alpha)\big).\]
\end{enumerate}
Let $\bar{\bbQ}= 
\langle\bbP_\xi,\name{\bbQ}_\xi:\xi<\gamma\rangle$ be a
$\kappa$--support iteration. If for every $\xi<\gamma$,  
\[\forces_{\bbP_\xi}\mbox{`` $\name{\bbQ}_\xi$ is reasonably
    A--bounding over $d$ ''},\] 
then $\bbP_{\gamma}=\lim(\bar{\bbQ})$ is reasonably ${\bf
  a}$--bounding over $d$ (and so also $\kappa$--proper).  
\end{theorem}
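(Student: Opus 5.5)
This theorem is quoted from \cite[Theorems 3.1 and 3.2]{RoSh:860}, so the plan follows the usual scheme for preservation of ``reasonable boundedness'' under $\kappa$--support iterations. The statement splits into two parts, proved in this order. First, the limit $\bbP_\gamma$ is strategically $({<}\kappa)$--complete. This is standard: strategic $({<}\kappa)$--completeness is preserved by $\kappa$--support iterations, by playing the component strategies coordinatewise along an increasing bookkeeping of supports. Second, for every $p\in\bbP_\gamma$, Generic has a winning strategy in $\agame(p,\bbP_\gamma)$. The final clause about $\kappa$--properness then follows from Proposition \ref{bound}(1).

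For the second part, I would fix winning strategies $\name{\mathbf{st}}_\xi$ for Generic in $\Agame(\cdot,\name{\bbQ}_\xi)$, given as $\bbP_\xi$--names. Using $\kappa$--support bookkeeping, I would enumerate the relevant coordinates $\{\xi_\alpha:\alpha<\kappa\}$ so that each coordinate appearing in some support is handled at cofinally many stages. At stage $\alpha$ of the $\agame$ game, Generic must simulate, for every coordinate $\xi_\beta$ with $\beta\le\alpha$, the $\alpha$-th move of the $\name{\mathbf{st}}_{\xi_\beta}$ game. That move is a set $\name{I}^\beta_\alpha$ of size $<d(\alpha)$, but it is only a name. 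Generic handles this by first using the subgame to decide, over a common extension, the possible values of all these objects below the relevant condition. She then takes the product $\prod_{\beta\le\alpha}I^\beta_\alpha$ of the candidate index sets, whose size is below $d(\alpha)$ by hypothesis (b): it is a product of $|\alpha|+1$ many cardinals each $<d(\alpha)$.

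Generic sets $\zeta_\alpha$ to be the order type of this product, which is $<d(\alpha)$ since $d(\alpha)$ is regular. During the subgame of length $\zeta_\alpha$ she runs through the tuples $\bar t$ in the product. For each tuple she proposes the condition whose $\xi_\beta$-coordinate is $p^\alpha_{t_\beta}$, and Antigeneric's answer gives the $q^\alpha_{t_\beta}$-parts fed back into the simulated $\Agame$ plays. Keeping the common parts increasing is where strategic completeness is used: at limit points of the subgame, and later between stages, lower bounds exist because of the completeness strategy and the fact that $\zeta_\alpha<\kappa$. One has to be careful that the responses are interpreted in $\bbP_{\xi_\beta}$--extensions. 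The standard device is to collect Antigeneric's answers into names obtained by amalgamating the whole list of $q^\alpha_\xi$.

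At the end, the condition $p^*$ is built coordinatewise by induction on $\xi\le\gamma$. At coordinate $\xi$, the simulated play of $\Agame(p(\xi),\name{\bbQ}_\xi)$ was won by $\name{\mathbf{st}}_\xi$ in $\bV^{\bbP_\xi}$, which gives a witness $p^*(\xi)$. Limit coordinates cause no problem, because supports have size $\le\kappa$ and the conditions are total functions. Then one checks that $p^*$ forces, for each $\alpha$, that some $q^\alpha_\xi$ lies in $\name{G}$. Namely, the generic picks out, coordinate by coordinate, indices $t_\beta\in I^\beta_\alpha$ with $q^\alpha_{t_\beta}$ in the generic. The tuple $\bar t$ then corresponds to some $\xi<\zeta_\alpha$, and the amalgamated $q^\alpha_\xi$ lies below the generic.

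The main obstacle I expect is this last compatibility step: showing that the $q^\alpha_\xi$ chosen for the tuple really is in $G$. It requires defining the proposed $p^\alpha_\xi$ so that its coordinate restrictions are determined by the tuple, independently of other tuples. The fix I would try first is to make $q^\alpha_\xi$, at coordinate $\xi_\beta$, a name that is the tuple's component when the generic selects that tuple, and trivial otherwise. I would then use the decision of the index sets in the ground model, which is possible thanks to the $\kappa$--properness-style fusion with strategic completeness.
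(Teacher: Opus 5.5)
The paper gives no proof of this theorem; it is quoted from Ros{\l}anowski--Shelah. Your outline therefore has to stand on its own. Its architecture is right: strategic completeness first, a bookkeeping of $\kappa$ coordinates, simulated plays of $\Agame$ at each coordinate, hypothesis (b) to bound the number of combinations, and $p^*$ assembled coordinatewise. But the core of stage $\alpha$ is missing, and two of your concrete steps fail.

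\emph{Deciding the moves.} The $\alpha$-th move at $\xi_\beta$ (the set $\name{I}^{\xi_\beta}_\alpha$ and its conditions) is a $\bbP_{\xi_\beta}$-name. It may depend on which index is realized in the generic at stage $\alpha$ at the lower coordinates of $w_\alpha$. So no ``common extension'' decides it. You cannot simply strengthen your running condition at a coordinate $\eta\in w_\alpha$, because the witness $p^*(\eta)$ of the game at $\eta$ only forces that \emph{some} stage-$\alpha$ answer at $\eta$ is in the generic. The possibilities form a tree, not a product of fixed sets: the index set at a higher coordinate depends on the choices below. They must be decided node by node. Strengthenings at coordinates in $w_\alpha$ go into your simulated Antigeneric answers of those games, and strengthenings elsewhere go into names defined by cases. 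This must be done privately \emph{before} $\zeta_\alpha$ is announced, since $\zeta_\alpha$ is computed from the outcome; then (b) plus regularity of $d(\alpha)$ bound the number of branches. It cannot be done ``using the subgame'', as you propose.

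\emph{The compatibility step.} This is the step you flag, and it really fails as stated. Suppose the proposal for a tuple $\bar t$ is just the condition with $p^\alpha_{t_\beta}$ at $\xi_\beta$. Two tuples that agree up to and including $\xi_\beta$ but differ above it then get answers with different, possibly incompatible, $\xi_\beta$-components, and arbitrary changes outside $w_\alpha$. Yet the game at $\xi_\beta$ accepts only one answer per index. That answer is a $\bbP_{\xi_\beta}$-name, so it cannot see the tuple's entries above $\xi_\beta$. Your proposed fix is not available: $q^\alpha_\xi$ is Antigeneric's move, not yours, and ``the generic selects this tuple'' is not an event in $\bV^{\bbP_{\xi_\beta}}$.

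The missing idea is to exploit the \emph{sequential} subgame, which is exactly why the conclusion concerns $\agame$ rather than $\Agame$. Run through the branches one at a time. Make each new proposal carry, at every coordinate, the value accumulated so far at its node. That value is defined by cases on whether earlier answers' restrictions lie in the generic, with completeness strategies interleaved so that limits exist. Only at the end of stage $\alpha$ hand to the game at $\eta\in w_\alpha$, for index $t$, the limit over all branches through the node, amalgamated over the lower nodes (e.g.\ by the least index realized in $G$). Put the limits at the other coordinates into the running condition. Then an induction on $\eta$ shows that, for the branch the generic selects, Antigeneric's answer restricted to $\eta$ lies in $G_\eta$.

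Relatedly, a coordinate entering at stage $\alpha$ must start its game from the condition accumulated there so far, with earlier stages simulated, not from $p(\xi)$. Otherwise its witness need not be compatible with Antigeneric's earlier answers at that coordinate.
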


\begin{definition}
  [See {\cite[Definition 3.10]{RoSh:942}}, {\cite[Definition
      2.20(1)]{RoSh:1001}}] 
\label{boundedPQ}
Assume that 
\begin{itemize}
\item $\kappa$ is strongly inaccessible, $f$ is a function with domain
  $\kappa$ and $|f(\alpha)|\leq \kappa$ (for $\alpha<\kappa$), 
\item $\bar{F}=\langle F_t:t\in\bigcup\limits_{\alpha<\kappa}
  \prod\limits_{\xi<\alpha} f(\xi)\rangle$ where $F_t$ is a
  ${\leq}|\alpha|$--complete filter on $f(\alpha)$ whenever $t\in
  \prod\limits_{\xi<\alpha}f(\xi)$, $\alpha<\kappa$. 
\end{itemize}
\begin{enumerate}
\item We define a forcing notion $\bbQ^2_{f,\bar{F}}$ as follows.\\ 
{\bf A condition} in $\bbQ^2_{f,\bar{F}}$ is a complete
  $\kappa$--tree $T\subseteq \bigcup\limits_{\alpha<\kappa}
  \prod\limits_{\xi<\alpha}f(\xi)$ such that  
\begin{enumerate}
\item[(a)] for every $t\in T$, either $|\suc_T(t)|=1$ or $\suc_T(t)\in  
  F_t$, and
\item[(b)] $(\forall t\in T)(\exists s\in T)(t\vtl s\ \&\ |\suc_T(s)|>1)$,
  and 
\item[(c)$^2$] if $\langle t_i:i<j\rangle\subseteq T$ is $\vtl$--increasing,
  $j<\kappa$, $|\suc_T(t_i)|>1$ for all $i<j$ and
  $t=\bigcup\limits_{i<j}t_i$, then ($t\in T$ and) $|\suc_T(t)|>1$.
\end{enumerate}
\noindent {\bf The order} of $\bbQ^2_{f,\bar{F}}$ is the reverse inclusion. 
\item Forcing notions $\bbQ^\ell_{f,\bar{F}}$ for $\ell=3,4$ are defined
  similarly, but the demand (c)$^2$ is replaced by the respective
  (c)$^\ell$:  
\begin{enumerate}
\item[(c)$^3$] for some club $C$ of $\kappa$ we have 
\[\big(\forall t\in T\big)\big(\lh(t)\in C\ \Rightarrow\ \suc_T(T)\in
F_t \big).\]
\item[(c)$^4$] $(\forall t\in T)(\mrot(T)\vtl t\ \Rightarrow\
  |\suc_T(t)|>1)$. 
\end{enumerate}
\end{enumerate}
\end{definition}

\begin{remark}
  In Definition \ref{boundedPQ} we do not demand that the filters $F_t$ are
  non-principal. In Section 4 we will be considering
  $F_t=\big\{f(\lh(t))\big\}$. 
\end{remark}

\begin{definition}
  \label{defavoid}
Suppose $b,h\in\cR^+_\kappa\cup\{\bar{\kappa}\}$, $h\leq b$. We define
$\bar{B}(h,b)= \big \langle B^{h,b}_t: t\in\bigcup\limits_{\alpha<\kappa}
\prod\limits_{\xi<\alpha} b(\xi) \big\rangle$, where
\[B^{h,b}_t=\big\{A\subseteq b(\lh(t)):\big|b(\lh(t))\setminus A\big|
  <h(\lh(t))\big\}.\] 
\end{definition}

\begin{remark}
  \label{remavoid}
  Let  $b,h\in\cR^+_\kappa\cup\{\bar{\kappa}\}$, $h\leq b$.
  \begin{enumerate}
  \item Then $\kappa,b$ and $\bar{B}(h,b)$ satisfy the assumptions of
    Definition \ref{boundedPQ}.
\item The forcing $\bbQ^*_b$ introduced in Definition \ref{boudQ4} is 
$\bbQ^4_{b,\bar{B}(b,b)}$ and the forcing notion $\bbQ^h_b$ is
$\bbQ^4_{b,\bar{B}(h,b)}$. 
  \end{enumerate}
\end{remark}

\begin{proposition}
  \label{q2isAb}
Assume that $\kappa,f$ and $\bar{F}$ are as in Definition \ref{boundedPQ}. 
\begin{enumerate}
\item The forcing notions $\bbQ^\ell_{f,\bar{F}}$ (for $\ell=2,3,4$) are
  strategically $({<}\kappa)$--complete.
\item If $|f(\alpha)|<\kappa$ for all $\alpha$, then
  $\bbQ^3_{f,\bar{F}}$ is a dense subset of $\bbQ^2_{f,\bar{F}}$.
\item If $d\in {}^\kappa (\kappa+1)$, $\big|\prod\limits_{\xi\leq\alpha}
  f(\xi)\big|<d(\alpha)$ for all $\alpha$, then $\bbQ^2_{f,\bar{F}}$ is
  reasonably A--bounding over $d$.   
\end{enumerate}
\end{proposition}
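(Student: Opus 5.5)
For (1), I will describe a winning strategy for Complete in $\Game_0^\kappa(\bbQ^\ell_{f,\bar{F}})$. The plan is to make her moves shrink the tree only \emph{above} a carefully chosen node, so that limits of the play stay conditions. At stage $i$, after Incomplete has played $p_i$, Complete picks a splitting node $t_i\in p_i$ (a node with $|\suc_{p_i}(t_i)|>1$, and for $\ell=4$ we may take $t_i=\mrot(p_i)$). She chooses it so that $\lh(t_i)>\lh(t_j)$ and $t_j\vtl t_i$ for $j<i$, which clause (b) allows. She then plays $q_i=(p_i)_{t_i}$.

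At a limit stage $i$, the natural candidate for the next condition is $\bigcap_{j<i}q_j$. I will check that $t^*=\bigcup_{j<i}t_j$ lies in every $q_j$ and that the intersection is a complete $\kappa$--tree. Here the ${\leq}|\alpha|$--completeness of $F_t$ enters. At a node $t$ of length $\alpha$, fewer than $\kappa$ trees are intersected, but only those $j$ with $\lh(t_j)\leq\alpha$ can contribute nontrivial successor sets at $t$. This gives at most $|\alpha|$ many, since the $\lh(t_j)$ are distinct ordinals below or at $\alpha$ (one must check cardinal arithmetic: $|\{j: \lh(t_j)\le\alpha\}|\le|\alpha|+1$, and handle the finite case). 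So $\suc(t)$ stays in $F_t$.

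Clause (c)$^2$ is inherited, since each $q_j$ satisfies it and the splitting nodes of the intersection above $t^*$ are splitting nodes of all $q_j$. For $t^*$ itself to split, we use (c)$^2$ applied to the $t_j$: they are splitting in $q_k$ for $k\le j$, and I will arrange that they remain splitting in later $q_k$ too, because $q_k$ only prunes above $t_k\supseteq t_j$. For (c)$^3$, the clubs are intersected, using fewer than $\kappa$ clubs. For (c)$^4$, the root becomes $t^*$. Legality of Incomplete's move at a limit is then just the existence of an upper bound, namely the intersection.

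For (2), given $T\in\bbQ^2$, I will prune it to a tree in $\bbQ^3$. Let $C$ be the club of $\alpha$ that are closed under the relevant bookkeeping. This uses that $|\prod_{\xi<\alpha}f(\xi)|<\kappa$ by inaccessibility and $|f(\alpha)|<\kappa$. Then I build $T'\subseteq T$ in which every node of length in $C$ splits. Along each branch, the splitting nodes form a closed unbounded set of levels by (b) and (c)$^2$. Fusion over the $<\kappa$ many nodes of each level lets us thin so that the splitting levels of all nodes contain $C$. The main obstacle is getting a single club working for all branches; the bounded size of levels is what makes the diagonal choice possible.

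For (3), Generic's strategy in $\Agame(p,\bbQ^2)$ at stage $\alpha$ is as follows. She maintains a fusion sequence of conditions $r_\alpha$ with a front $F_\alpha$ of splitting nodes at height $\approx\alpha$, where $|F_\alpha|\le|\prod_{\xi\le\alpha}f(\xi)|<d(\alpha)$. She sets $I_\alpha=F_\alpha$ and $p^\alpha_t=(r_\alpha)_t$. Antigeneric answers $q^\alpha_t$, and she sets $r_{\alpha+1}=\bigcup_t q^\alpha_t$, pushing the front beyond the roots of $q^\alpha_t$. At limits she takes intersections as in (1). The fusion limit $p^*$ is a condition in which every branch passes through some $t\in F_\alpha$ and lies in $q^\alpha_t$, so $p^*$ forces that some $q^\alpha_t$ is in $\name{G}$. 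The delicate step is keeping the front sizes below $d(\alpha)$ while preserving (c)$^2$ at limits.
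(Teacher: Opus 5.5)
\textbf{Part (3) has a genuine gap.} The bound $|F_\alpha|\le\big|\prod_{\xi\le\alpha}f(\xi)\big|<d(\alpha)$ holds only for a front made of nodes of length at most $\alpha+1$. Your strategy instead pushes the front past the roots of Antigeneric's answers $q^\alpha_t$, and he may put those roots at any height $\rho\gg\alpha$. At that height nothing keeps the front below $d(\alpha+1)$. For example, take the filter at a root $u$ of height $\rho$ to be $\{f(\rho)\}$ (as for $\bbD_b$), $f$ fast increasing, and $d(\alpha)=\big|\prod_{\xi\le\alpha}f(\xi)\big|^+$. Then freezing the splitting at $u$ puts all $|f(\rho)|$ successors of $u$ into the next front, and $|f(\rho)|$ can exceed $d(\alpha+1)$. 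You call this ``the delicate step'' but give no mechanism for it.

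The missing idea is to decouple the front from splitting. In the paper, Generic plays as $I_\alpha$ the \emph{entire level} $\alpha+1$ of a side condition $R_\alpha$; these nodes need not split. She sets $S^\alpha_t=(R_\alpha)_t$ and builds $R_{\alpha+1}$ with the same level $\alpha+1$ and $(R_{\alpha+1})_t\subseteq T^\alpha_t$ for $t\in I_\alpha$. This gives three things:
\begin{itemize}
\item $|I_\alpha|<d(\alpha)$ is automatic;
\item the tree up to level $\alpha+1$ is frozen from stage $\alpha$ on;
\item level $\alpha+1$ of $R^*=\bigcap_{\alpha<\kappa}R_\alpha$ is $I_\alpha$, so $R^*$ witnesses Generic's win.
\end{itemize}
Splitting is controlled separately. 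Each $R_\alpha$ splits exactly at the levels of a club $C_\alpha$ of limit ordinals, with $C_{\alpha+1}\subseteq C_\alpha$ and $C_{\alpha+1}\cap(\alpha+1)=C_\alpha\cap(\alpha+1)$. This is obtained by strengthening the fewer than $\kappa$ answers $T^\alpha_t$ so that they split exactly on a common club. Then $R^*$ splits exactly at the levels in $\triangle_{\alpha<\kappa}C_\alpha$, which gives (b) and (c)$^2$ for $R^*$ at once.

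\textbf{Part (1)} otherwise follows the paper: Complete plays $(p_i)_{t_i}$, so $t_i=\mrot(q_i)$ and the roots strictly increase; the limit is the intersection; ${\le}|\alpha|$--completeness controls successor sets. One step is false, though. You cannot arrange that $t_j$ stays splitting in $q_k$ for $k>j$, because $t_j\vtl t_k$ and in $q_k=(p_k)_{t_k}$ every proper initial segment of $t_k$ has a single successor. It is also not needed. For fixed $k<i$, each $t_j$ with $k\le j<i$ lies in $\spl(q_j)\subseteq\spl(q_k)$. Applying (c)$^2$ in $q_k$ to this tail gives $t^*\in\spl(q_k)$; this is exactly how the paper shows the union of the roots is the root of the intersection.

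You also skip clause (b) for the intersection when $\ell=2$, and it is not automatic. The paper builds a chain of length $i$ whose $(k+1)$st term is the shortest splitting node of $q_{k+1}$ extending a proper extension of the previous term. Such a node automatically lies in every $q_j$, and the same tail argument applies to the union.

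\textbf{Part (2)} is not proved in the paper. Note that pruning cannot create splitting, so $C$ cannot be fixed in advance as a set of closure points. Each new level must be chosen inside the intersection of the splitting-level clubs of branches through the fewer than $\kappa$ current nodes. Moreover, $T'$ should split \emph{exactly} on $C$, since a tree in $\bbQ^3$ need not satisfy (c)$^2$.
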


\begin{proof}
This actually follows from \cite[Theorems B.6.5 and B.6.6, and Proposition
B.6.7]{RoSh:777} and \cite[Proposition 4.1]{RoSh:860}. However for clarity
and completeness of our presentation we will give some of the arguments
here.
\medskip

\noindent  (1) \quad We will describe a strategy of Complete in the game
$\Game^\kappa_0\big(\bbQ^\ell_{f,\bar{F}}\big)$. Suppose we are at a stage
$\alpha<\kappa$ of the play and a sequence $\big\langle
(S_i,T_i):i<\alpha\big\rangle$ has been already constructed. Suppose also
that we know it has an $\leq_{\bbQ^\ell_{f,\bar{F}}}$--upper bound (relevant
if $\alpha$ is limit). Now Incomplete puts forward a condition $S_\alpha \in
\bbQ^\ell_{f,\bar{F}}$ stronger than all $T_i$ for $i<\alpha$. The strategy
of Complete is to choose (the $<^*_\chi$--first) $t\in S_\alpha$ such that
$\lh(t)=\lh\big(\mrot(S_\alpha)\big)+\omega$ and after this she plays
$T_\alpha=\big(S_\alpha\big)_t$. 
\medskip

In order to show that the strategy described above is a winning strategy for
Complete, it is enough to prove that
\begin{enumerate}
\item[$(\oplus)$] if $\alpha<\kappa$ is a limit ordinal and $\langle
  T_i:i<\alpha \rangle \subseteq \bbQ^\ell_{f,\bar{F}}$ is
  $\leq_{\bbQ^\ell_{f,\bar{F}}}$--increasing sequence of conditions such
  that $\lh\big(\mrot(T_i)\big)\geq i$, then $\bigcap\limits_{i<\alpha}
  T_i\in  \bbQ^\ell_{f,\bar{F}}$. 
\end{enumerate}
So let $\langle T_i:i<\alpha \rangle \subseteq \bbQ^\ell_{f,\bar{F}}$ be as
in the assumptions of $(\oplus)$ and let $T=\bigcap\limits_{i<\alpha}
T_i$. Clearly $ST$ is a tree. If $t\in\spl(T_i)$ for each $i<\alpha$, then
$t\in T$, $\lh(t)\geq \alpha$ and $\suc_T(t)=\bigcap\limits_{i<\alpha}
\suc_{T_i}(t)\in F_t$ (as $F_t$ is ${\leq}|\alpha|$--complete). Hence
$\bigcap\limits_{i<\alpha} \spl(T_i)=\spl(T)$ and for each $t\in T$, either
$|\suc_T(t)|=1$ or $\suc_T(t)\in F_t$ (remember condition
\ref{boundedPQ}(1)(a)). In particular, at this moment we know that $T$ is a  
complete $\kappa$--tree.

Let $s^*=\bigcup\limits_{i<\alpha} \mrot(T_i)$.

We will now consider the cases $\ell=2$ and $\ell=4$ separately.
\medskip

\noindent {\sc Case $\ell=4$}\\
If $s^*\trianglelefteq s\in T$, then $s\in \bigcap\limits_{i<\alpha}
\spl(T_i) =\spl(T)$. Thus $s^*=\mrot(T)$ and $T\in \bbQ^4_{f,\bar{F}}$. 
\medskip

\noindent {\sc Case $\ell=2$}\\
If $\mrot(T_j)=s^*$ for some $j<\alpha$, then easily $s^*\in
\bigcap\limits_{i<\alpha} \spl(T_i)$. Otherwise there is an increasing
sequence $\langle i_\gamma:\gamma<\cf(\alpha)\rangle$ cofinal in $\alpha$
and such that $\mrot(T_{i_{\gamma_0}})\vtl \mrot(T_{i_{\gamma_1}})$ whenever
$\gamma_0<\gamma_1<\cf(\alpha)$. Since $\mrot(T_{i_\gamma})\in \spl(T_j)$
for all $j\leq i_\gamma$, we may use property \ref{boundedPQ}(1)(c)$^2$ to
argue that $s^*\in \bigcap\limits_{i<\alpha} \spl(T_i)$.

In either case we conclude $s^*= \mrot(T)$.
\medskip

To verify \ref{boundedPQ}(1)(b) for $T$ assume $t\in T$. We attempt to
construct inductively a sequence $\langle t_i:i<\alpha\rangle\subseteq T$
such that $t=t_0$, $t_i\vtl t_j$ for $i<j<\alpha$, and $t_{i+1}\in
\spl(T_{i+1})$ for all $i<\alpha$. Suppose $t_i\in T$ has been
determined. Pick any $s\in T$ properly extending $t_i$. If $s\in\spl(T)$,
then we get the required property and we interrupt our
construction. Otherwise, take $t_{i+1}$ to be the shortest sequence in
$\spl(T_{i+1})$ extending $s$ (possibly $t_{i+1}=s$). Then necessarily
$t_{i+1} \in T$.  If $i<\alpha$ is limit and the construction has not
terminated by this point, we let $t_i=\bigcup\limits_{j<i} t_j$. We know
that $t_i\in T\subseteq T_i$.

If we did manage to construct $\langle t_i:i<\alpha\rangle$, then by the
same argument as for the roots, $\bigcup\limits_{i<\alpha} t_i\in
\bigcap\limits_{i<\alpha} \spl(T_i)=\spl(T)$.
\medskip

Finally, demand \ref{boundedPQ}(1)(c)$^2$ for $T$ is an immediate
consequence of the same demand for the $T_i$s and the equality
$\bigcap\limits_{i<\alpha} \spl(T_i)=\spl(T)$.
\bigskip

\noindent (3)\quad Assume $d:\kappa\longrightarrow (\kappa+1)$ is such that
$\big|\prod\limits_{\xi\leq \alpha}f(\xi)\big|<d(\alpha)$ for each
$\alpha<\kappa$.

Let $R\in \bbQ^2_{f,\bar{F}}$. We will describe a winning strategy for
Generic in $\Agame(R, \bbQ^2_{f,\bar{F}})$.

During the course of the play, in addition to her innings $I_\alpha$,
$\bar{S}^\alpha= \langle S^\alpha_t:t\in I_\alpha\rangle$,  Generic will be
choosing aside conditions $R_\alpha\in \bbQ^2_{f,\bar{F}}$ and club sets
$C_\alpha\subseteq \{\gamma<\kappa:\gamma\mbox{ is limit }\}$. These objects
are to be chosen so that if $I_\alpha$, $\bar{S}^\alpha=\langle
S^\alpha_t:t\in I_\alpha\rangle$, $\bar{T}^\alpha=\langle T^\alpha_t:t\in
I_\alpha\rangle$ are what the players put at stage $\alpha$, and
$R_\alpha,C_\alpha$ are written down aside by Generic, then the following
demands $(\otimes)_0$--$(\otimes)_5$ are satisfied.
\begin{enumerate}
\item[$(\otimes)_0$]  $I_\alpha$, $\bar{S}^\alpha$ and $\bar{T}^\alpha$
  abide by the rules of the game $\Agame(R, \bbQ^2_{f,\bar{F}})$. 
\item[$(\otimes)_1$] $R_\alpha\cap\prod\limits_{\xi\leq \alpha} f(\xi)=
  R_{\alpha+1} \cap \prod\limits_{\xi\leq \alpha} f(\xi) =I_\alpha$,
  $R_\alpha \leq_{\bbQ^2_{f,\bar{F}}} R_{\alpha+1}$ and
  $T\leq_{\bbQ^2_{f,\bar{F}}} R_0$.
\item[$(\otimes)_2$] If $\gamma$ is limit, then
  $R_\gamma=\bigcap\limits_{\alpha<\gamma} R_\alpha$ and
  $C_\gamma=\bigcap\limits_{\alpha<\gamma} C_\alpha$. 
\item[$(\otimes)_3$] If $t\in R_\alpha$, then $t\in \spl(R_\alpha)
  \Leftrightarrow \lh(t)\in C_\alpha$.
\item[$(\otimes)_4$] $C_{\alpha+1}\cap (\alpha+1) = C_\alpha\cap
  (\alpha+1)$, $C_{\alpha+1} \subseteq C_\alpha$.
\item[$(\otimes)_5$]  If $t\in I_\alpha$ then $S^\alpha_t=
  \big(R_\alpha\big)_t$ and
  $\big(R_{\alpha+1}\big)_t\geq_{\bbQ^2_{f,\bar{F}}} T^\alpha_t$. (Note: we
  do not have equality here as we need to strengthen $T^\alpha_t$ to ensure 
  $(\otimes)_3$.) 
\end{enumerate}
Conditions $(\otimes)_0$--$(\otimes)_5$ determine a strategy for
Generic (for definiteness we may demand that the objects chosen by her
are the $<^*_\chi$--first ones with the described properties).

If $\big\langle I_\alpha,\bar{S}^\alpha,\bar{T}^\alpha:\alpha <\kappa
\big\rangle$ is a result of a play agreeing with the strategy described
above and $\langle R_\alpha, C_\alpha:\alpha<\kappa\rangle$ is the sequence
of the objects chosen aside, then Generic puts $R^*=
\bigcap\limits_{\alpha<\kappa} R_\alpha$ and
$C=\mathop{\triangle}\limits_{\alpha<\kappa} C_\alpha$. By 
arguments as in part (1) one shows that $R^*\subseteq
\bigcup\limits_{\beta<\kappa}\prod\limits_{\alpha<\beta} f(\alpha)$ is
a complete $\kappa$--tree and for $t\in R^*$ we have
$t\in\spl(R^*)\Leftrightarrow \lh(t)\in C$. Thus $R^*\in \bbQ^2_{f,\bar{F}}$ 
and
\[R^*\forces_{\bbQ^2_{f,\bar{F}}} (\forall \alpha<\kappa)(\exists t\in
  I_\alpha) (T^\alpha_t\in \name{G}_{\bbQ^2_{f,\bar{F}}}).\] 
\end{proof}

\begin{observation}
  \label{genseq}
  Assume $\kappa,f$ and $\bar{F}$ are as in Definition
  \ref{boundedPQ}. Let $\name{W}=\name{W}_{\bbQ^\ell_{f,\bar{F}}}$ be
  the canonical name for the $\kappa$--sequence added by
  $\bbQ^\ell_{f,\bar{F}}$ (i.e., $T\forces_{\bbQ^\ell_{f,\bar{F}}}
  \mrot(T) \vtl \name{W}$). Let $T\in \bbQ^\ell_{f,\bar{F}}$.
  \begin{enumerate}
  \item If $\ell=2$ then $T\forces_{\bbQ^2_{f,\bar{F}}}$``
    $\{\alpha<\kappa: \name{W}\rest\alpha\in \spl(T)\ \wedge\
    \name{W}(\alpha)\in \suc_T(\name{W}\rest\alpha)\}$ is a club''.
  \item If $\ell=4$ then $T\forces_{\bbQ^4_{f,\bar{F}}}$``
    $\big(\forall\alpha\geq \lh(\mrot(T))\big)\big(
    \name{W}(\alpha)\in \suc_T(\name{W}\rest\alpha)\big)$ ''. 
  \end{enumerate}
\end{observation}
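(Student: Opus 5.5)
Both parts reduce to density arguments with respect to $T$, using that every $S\geq T$ satisfies $S\subseteq T$. The genericity of $\name{W}$ is only needed in part (1), to see that the set in question is unbounded.

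\emph{Part (2).} Let $G$ be generic with $T\in G$ and $W=\name{W}^G$. For each $\alpha<\kappa$, some $S\in G$ has $\lh(\mrot(S))>\alpha$, and then $\mrot(S)\vtl W$. Hence $W\rest(\alpha+1)\in S\subseteq T$ for all $\alpha$, so $W\in\lim(T)$. In particular $W(\alpha)\in\suc_T(W\rest\alpha)$ for every $\alpha$. Condition (c)$^4$ ensures that any $t\in T$ extending $\mrot(T)$ lies in $\spl(T)$; this explains why the statement is phrased from $\lh(\mrot(T))$ on. So part (2) is just the remark that the generic branch runs through every condition in $G$.

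\emph{Part (1).} Put $C^*=\{\alpha<\kappa: W\rest\alpha\in\spl(T)\}$. As in part (2), $W\in\lim(T)$, so the conjunct $W(\alpha)\in\suc_T(W\rest\alpha)$ holds automatically. It therefore suffices to show that $C^*$ is a club.

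Closedness comes from (c)$^2$. Let $\langle\alpha_i:i<j\rangle$ be increasing in $C^*$ with $j<\kappa$, and set $t_i=W\rest\alpha_i\in\spl(T)$. Then (c)$^2$ gives $\bigcup_{i<j}t_i=W\rest\sup_i\alpha_i\in\spl(T)$.

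Unboundedness needs a density argument, since a condition in $G$ could a priori dictate a path through $T$ with long non-splitting stretches. Fix $\beta<\kappa$ and $S\geq T$. Let $t=\mrot(S)$ and take $\gamma\geq\beta$ above $\lh(t)$. By condition (b) for $S$, $t$ has proper extensions in $\spl(S)$ of length $\geq\gamma$. For any $s\in S$, $\spl(S)\subseteq\spl(T)$: if $|\suc_S(s)|>1$ then $|\suc_T(s)|>1$, because $S\subseteq T$. Choose $s\in\spl(S)$ with $t\trianglelefteq s$ and $\lh(s)\geq\gamma$. Then $(S)_s$ is a condition in $\bbQ^2_{f,\bar{F}}$ above $S$ whose root is $s$, and it forces $W\rest\lh(s)=s\in\spl(T)$, i.e.\ $\lh(s)\in C^*\setminus\beta$. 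Checking that $(S)_s$ satisfies (a)--(c)$^2$ is routine, as these properties pass to the cone above a splitting node. By density, $C^*$ is unbounded.

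The main obstacle is this unboundedness step: one must observe that splitting in a stronger condition implies splitting in $T$, and that restricting to the cone above a splitting node of $S$ yields a legitimate condition.
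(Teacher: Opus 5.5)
Your proof is correct. The paper states this observation without proof, and your argument is the routine density verification it leaves implicit: $W\in\lim(T)$ because conditions in $G$ have arbitrarily long roots, closedness follows from (c)$^2$, and unboundedness follows by passing to $(S)_s$ for a long splitting node $s$ of $S$, using $\spl(S)\subseteq\spl(T)$.

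One point in the closedness step should be made explicit. (c)$^2$ is a property of $T$ in $\bV$, so it cannot be applied directly to a sequence $\langle\alpha_i:i<j\rangle$ taken from $\bV[G]$. Put $\delta=\sup_i\alpha_i$. Then $W\rest\delta$ is an initial segment of the root of some condition in $G$, so it lies in $\bV$, and hence so does $C^*\cap\delta$. Apply (c)$^2$ to the increasing enumeration of $C^*\cap\delta$ computed in $\bV$.
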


  \begin{theorem}
    \label{re125one}
    Assume that $\kappa$ is strongly inaccessible cardinal,
    $2^\kappa=\kappa^+$. Then there is a strategically
    $({<}\kappa)$--complete, $\kappa$--proper, $\kappa^{++}$--cc forcing notion
    $\bbP$ such that
\[\forces_{\bbP} \mbox{`` } \gd_\kappa(\leq^*)=\kappa^+\mbox { and }
  \big(\forall b,h\in \cR_\kappa^+\big)\big(h\leq b\ \Rightarrow\
  \gb_\kappa^b( \leq^\cl)= \gb_\kappa^{b,h}(\cancel{\ni^{\rm stat}}) =
  2^\kappa= \kappa^{++}\big)\mbox{ ''.}\]   
  \end{theorem}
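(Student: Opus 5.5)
We will build a $\kappa$--support iteration $\langle\bbP_\xi,\name{\bbQ}_\xi:\xi<\kappa^{++}\rangle$ in which every iterand is a forcing of the form $\bbQ^2_{b,\bar{B}(h,b)}$ for some $\bbP_\xi$--names $\name{b}_\xi,\name{h}_\xi$ of elements of $\cR^+_\kappa$ with $\name{h}_\xi\leq\name{b}_\xi$. A bookkeeping argument, made possible by $2^\kappa=\kappa^+$ in every intermediate model, will ensure that every pair of $\bbP_{\kappa^{++}}$--names $(\name{b},\name{h})$ is handled cofinally often. By Remark \ref{remavoid}(1) these iterands are legitimate instances of Definition \ref{boundedPQ}. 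The key point is that $b(\alpha)<\kappa$, so $\big|\prod_{\xi\leq\alpha}b(\xi)\big|<\kappa$ by inaccessibility. Proposition \ref{q2isAb}(3) with $d=\bar{\kappa}$ then gives that each iterand is reasonably A--bounding (over $\bar\kappa$), and Proposition \ref{q2isAb}(1) gives strategic $({<}\kappa)$--completeness. Theorem \ref{verA} then makes every $\bbP_\xi$ reasonably {\bf a}--bounding, hence, by Proposition \ref{bound}(1), $\kappa$--proper and ${}^\kappa\kappa$--bounding. Hypothesis (b) of Theorem \ref{verA} holds for $d=\bar\kappa$ by inaccessibility. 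Strategic completeness of the limit follows from the cited iteration theorems of \cite{RoSh:1001} (as in the proof of Corollary \ref{silly}), and these also give $\forces_{\bbP_\xi}2^\kappa=\kappa^+$ for $\xi<\kappa^{++}$ and the $\kappa^{++}$--cc of $\bbP_{\kappa^{++}}$. This is where we use $\bbQ^2$ rather than $\bbQ^4=\bbQ^h_b$: the latter may add dominating reals (Proposition \ref{bad}).

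Next we compute the values. Since the iteration is ${}^\kappa\kappa$--bounding, and all its initial segments are too (by the same argument or by the $\kappa^{++}$--cc), every $x\in{}^\kappa\kappa$ in the final model appears in some $\bV^{\bbP_\xi}$ and is bounded everywhere by a ground model function. Hence ${}^\kappa\kappa\cap\bV$, which has size $\kappa^+$, is dominating, giving $\gd_\kappa(\leq^*)=\kappa^+$. Counting nice names and using the $\kappa^{++}$--cc gives $2^\kappa=\kappa^{++}$. For the upper bound we use the trivial inequalities $\gb^{b,h}_\kappa(\cancel{\ni^{\rm stat}})\leq\gb^{b}_\kappa(\leq^\cl)\leq 2^\kappa$ or their analogues. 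Concretely, it suffices to show that every family of size $\leq\kappa^+$ is bounded. In the analogue needed, $\gb^b(\leq^\cl)$ is bounded below by $\gb^{b,b}(\cancel{\ni^{\rm stat}})$ via $\psi(\alpha)=$ the initial segment below $f(\alpha)$. So I will prove directly, for each relevant pair $(b,h)$, that any $\mathcal F\subseteq{\rm Loc}^{b,h}_\kappa$ of size $\kappa^+$ has some $w$ with $\varphi\;\cancel{\ni^{\rm stat}}\;w$ for all $\varphi\in\mathcal F$. For $\leq^\cl$ the same argument with $h=b$ works, since $|\{\gamma:\gamma\le f(\alpha)\}|<b(\alpha)$ by regularity.

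For that lower bound, fix $\mathcal F$ of size $\kappa^+$. By the $\kappa^{++}$--cc it lies in some $\bV^{\bbP_\xi}$, and by bookkeeping there is $\zeta\geq\xi$ with $\name{\bbQ}_\zeta=\bbQ^2_{b,\bar{B}(h,b)}$. Let $W$ be its generic. For each $\varphi\in\mathcal F$, density in $\bbQ^2$ gives a condition $T\in G$ with $\suc_T(t)\cap\varphi(\lh(t))=\emptyset$ for all $t\in\spl(T)$: prune each splitting successor set by the fewer than $h(\lh(t))$ elements of $\varphi(\lh(t))$, using that $B^{h,b}_t$ is closed under removing $<h$ many points (regularity of $h(\alpha)$). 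Observation \ref{genseq}(1) then shows that $\{\alpha:W(\alpha)\notin\varphi(\alpha)\}$ contains a club. This is exactly why clubs, rather than tails, appear in the theorem. The main obstacle I expect is checking the conditions in the iteration: that $\bar{B}(h,b)$ are $\leq|\alpha|$--complete filters (this uses $\alpha<h(\alpha)$ and $h\in\cR^+_\kappa$), and that names $\name{b},\name{h}$ for elements of $\cR^+_\kappa$ are bookkept properly. I would handle the latter as in Corollary \ref{silly}.
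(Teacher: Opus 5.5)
Your proposal is correct and is essentially the paper's proof: a $\kappa^{++}$-length $\kappa$-support iteration of $\bbQ^2_{b,\bar{B}(h,b)}$ with bookkeeping over all pairs $h\leq b$ from $\cR^+_\kappa$. It uses reasonable A-boundedness over $\bar{\kappa}$ (Proposition \ref{q2isAb}(3) together with Theorem \ref{verA}) to get ${}^\kappa\kappa$-bounding and hence $\gd_\kappa(\leq^*)=\kappa^+$, and Observation \ref{genseq}(1) to see that the generic avoids every earlier slalom on a club.

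One harmless slip: the inequality $\gb^{b,h}_\kappa(\cancel{\ni^{\rm stat}})\leq\gb^b_\kappa(\leq^\cl)$ is backwards. Since ${\rm Loc}^{b,h}_\kappa\subseteq{\rm Loc}^{b,b}_\kappa$, in fact $\gb^b_\kappa(\leq^\cl)=\gb^{b,b}_\kappa(\cancel{\ni^{\rm stat}})\leq\gb^{b,h}_\kappa(\cancel{\ni^{\rm stat}})$. Nothing depends on it, because each coefficient is trivially $\leq 2^\kappa$ and you prove the lower bound for every pair directly.
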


  \begin{proof}
    For $b,h\in \cR_\kappa^+$, $h\leq b$, let $\bar{B}(h,b)$ be as
    introduced in Definition \ref{defavoid}.  To simplify notation, let
    $\bbQ^2_{b,\bar{B}(h,b)}$ be denoted by $\bbQ^2(h,b)$. By  Proposition
    \ref{q2isAb} the forcing notion  $\bbQ^2(h,b)$ is reasonably A--bounding
    over $\bar{\kappa}$. 
    
Using suitable bookkeeping arguments construct a $\kappa$--support iteration 
$\bar{\bbQ}=\langle\bbP_\xi,\name{\bbQ}_\xi:\xi<\kappa^{++}\rangle$ such
that
\begin{enumerate}
\item[(a)]  for every $\xi<\kappa^{++}$, for some $\bbP_\xi$--names
  $\name{b}_\xi,\name{h}_\xi$ for elements of $\cR_\kappa^+$ (in
  $\bV^{\bbP_\xi}$) we have $\forces_{\bbP_\xi}$`` $\name{h}_\xi\leq
  \name{b}_\xi\ \wedge\  \name{\bbQ}_\xi= \bbQ^2(\name{h}_\xi,
  \name{b}_\xi)$ ''; 
\item[(b)] for all $\bbP_{\kappa^{++}}$--names $\name{b},\name{h}$ for
  elements of $\cR_\kappa^+$ (in $\bV^{\bbP_{\kappa^{++}}}$) there are
  cofinally many $\xi<\kappa^{++}$ such that
  $\forces_{\bbP_{\kappa^{++}}}$`` $\name{h}\leq \name{b}\ \Rightarrow\ 
  (\name{h}=\name{h}_\xi\ \wedge\ \name{b}=\name{b}_\xi)$ ''. 
\end{enumerate}
It is possible to carry out the construction, because demand (a) and 
Proposition \ref{q2isAb} imply that the iterands are reasonably A--bounding
over $\bar{\kappa}$. Therefore Theorem \ref{verA} guarantees that each
$\bbP_\xi$ will be strategically $({<}\kappa)$--complete and
$\kappa$--proper. Thus by  \cite[Theorem 1.3]{RoSh:1001} we know that
$\forces_{\bbP_\xi}  2^\kappa=\kappa^+$ (for each $\xi<\kappa^{++}$) and
$\bbP_{\kappa^{++}}$ is $\kappa^{++}$--cc. Since $\bbP_{\kappa^{++}}$ will
be reasonably {\bf a}--bounding over $\bar{\kappa}$ (by Theorem \ref{verA}),
so also ${}^\kappa\kappa$--bounding, we will have 
\[\forces_{\bbP_{\kappa^{++}}} \gd_\kappa(\leq^*)=\kappa^+.\]
On the other hand, forcing with $\bbQ^2(h,b)$ adds a $\cancel{\ni^{\rm
    stat}}$--dominating element of  $\prod\limits_{\alpha<\kappa}
b(\alpha)$. Therefore, demand (b) implies

$\displaystyle \forces_{\bbP_{\kappa^{++}}}  2^\kappa=\kappa^{++}\mbox { and }
  \big(\forall h,b\in \cR_\kappa^+\big) \big(h\leq b\ \Rightarrow
  \gb_\kappa^b(\leq^\cl) =\gb^{b,h}_\kappa(\cancel{\ni^{\rm stat}})=
  \kappa^{++}\big) $.  
  \end{proof}

  \begin{theorem}
    \label{oneb}
    Assume that $\kappa$ is strongly inaccessible cardinal,
    $2^\kappa=\kappa^+$ and $b,h\in \cR_\kappa^+$, $h\leq b$. Let
    $d\in\cR_\kappa$ be given by $d(\alpha)=\Big(b(\alpha)^{|\alpha|}
    \Big)^+$ for  $\alpha<\kappa$. Then there is a strategically
    $({<}\kappa)$--complete, $\kappa$--proper, $\kappa^{++}$--cc
    forcing notion $\bbP$ such that
    \[\begin{array}{r}
      \forces_{\bbP} \mbox{`` }\big(\forall c\in\cR_\kappa\big)\big(c\geq
  d\Rightarrow  \gd_\kappa^c(\in^\cl)=\gd_\kappa^c(\in^*)=
  \gd^c_\kappa(\leq^*)=\gd_\kappa^c(\leq^\cl)=\kappa^+\big) \mbox {
        and }\\
        \gb_\kappa^b(\leq^\cl)=2^\kappa= \kappa^{++}\mbox{ ''.}
        \end{array}\]   
  \end{theorem}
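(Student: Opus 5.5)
Iterate with $\kappa$--supports, length $\kappa^{++}$, always using the same iterand $\bbQ^2(b,b)=\bbQ^2_{b,\bar{B}(b,b)}$ (as in the proof of Theorem \ref{re125one}, but with the fixed ground-model $b$). Let $\bar{\bbQ}=\langle\bbP_\xi,\name{\bbQ}_\xi:\xi<\kappa^{++}\rangle$ with $\forces_{\bbP_\xi}\name{\bbQ}_\xi=\bbQ^2(b,b)$ and $\bbP=\bbP_{\kappa^{++}}$. Here $f=b$, and for every $\alpha$ we have $\big|\prod_{\xi\leq\alpha}b(\xi)\big|\leq b(\alpha)^{|\alpha|}<d(\alpha)$, since $b$ is non-decreasing. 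So by Proposition \ref{q2isAb}(3) each iterand is reasonably A--bounding over $d$. The function $d$ satisfies hypothesis (b) of Theorem \ref{verA}. Each $d(\alpha)$ is a successor, hence regular, cardinal. For $g\in{}^\alpha d(\alpha)$ we get $|\prod_{\xi<\alpha}g(\xi)|\leq (b(\alpha)^{|\alpha|})^{|\alpha|}=b(\alpha)^{|\alpha|}<d(\alpha)$. This needs $b(\alpha)^{|\alpha|}$ to be computed in the intermediate models, but strategic $({<}\kappa)$--completeness adds no new bounded sequences, so $d$ is unchanged.

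Theorem \ref{verA} then makes $\bbP$ reasonably {\bf a}--bounding over $d$, strategically $({<}\kappa)$--complete and $\kappa$--proper. As in Theorem \ref{re125one}, \cite[Theorem 1.3]{RoSh:1001} gives $2^\kappa=\kappa^+$ at every stage $\xi<\kappa^{++}$ and the $\kappa^{++}$--cc of $\bbP$. Hence $\forces_\bbP 2^\kappa=\kappa^{++}$: the generic functions added at cofinally many stages are pairwise distinct, and the cc together with counting nice names gives the upper bound.

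\emph{Small dominating numbers.} By Proposition \ref{bound}(2), every $\bbP$--name $\name{\tau}$ for an element of $\bairek$ (below any $c$) is forced by some condition to lie pointwise in a ground-model slalom $\psi\in{\rm Loc}^{\bar\kappa,d}_\kappa\cap\bV$. If $\name\tau\in\prod c$ we may intersect each $\psi(\alpha)$ with $c(\alpha)$, so $\psi\in{\rm Loc}^{c,d}_\kappa$. A density argument then shows that the ground-model family ${\rm Loc}^{c,d}_\kappa\cap\bV$, of size $\kappa^+$, $\in^*$-- and $\in^\cl$--dominates in fact everywhere. Since $c\geq d$, slaloms of width $<d\leq c$ are legal, so $\gd^c_\kappa(\in^\cl),\gd^c_\kappa(\in^*)\leq\kappa^+$. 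For $\leq^\cl$ and $\leq^*$ below $c$, map each ground-model slalom to $\alpha\mapsto\sup\psi(\alpha)$. This is $<c(\alpha)$ because $c(\alpha)$ is regular and $|\psi(\alpha)|<d(\alpha)\leq c(\alpha)$. So $\gd^c_\kappa(\leq^\cl),\gd^c_\kappa(\leq^*)\leq\kappa^+$. The lower bounds $\geq\kappa^+$ are the standard diagonalization, using that each $c(\alpha)$ is infinite and $\kappa^{<\kappa}=\kappa$; a family of size $\kappa$ is beaten on a club by diagonal construction. The relations $\leq^\cl$ and $\in^\cl$ are weaker than their ${}^*$ versions, so their dominating numbers are at most the ${}^*$ ones, which keeps the chain consistent.

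\emph{Large bounding number.} Let $\name W_\xi$ be the generic added by $\name\bbQ_\xi$. By Observation \ref{genseq}(1), on a club of $\alpha$ the value $\name W_\xi(\alpha)$ lies in a set co-bounded in $b(\alpha)$ that was chosen in $\bV^{\bbP_\xi}$. Given $\kappa^+$ functions in $\prod b$ in the final model, the $\kappa^{++}$--cc places them all in some $\bV^{\bbP_\xi}$. A density argument in $\bbQ^2(b,b)$ then shows that for each such $g$, on a club, $g(\alpha)<\name W_\xi(\alpha)$. One shrinks the splitting successor sets at splitting levels to $b(\alpha)\setminus(g(\alpha)+1)$, which stays co-bounded and keeps the tree in $\bbQ^2$. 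So no family of size $\leq\kappa^+$ is $\leq^\cl$--unbounded, giving $\gb^b_\kappa(\leq^\cl)\geq\kappa^{++}=2^\kappa$.

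The main obstacle is the precise verification of the slalom localization over $d$ for the whole iteration, namely that Theorem \ref{verA}'s hypothesis holds for this particular $d$ in every intermediate model. A secondary point is the claim for all $c\geq d$, including $c$ added by the forcing. Since $\bbP$ adds no new elements of ${}^{<\kappa}\kappa$, the check $c\geq d$ is absolute and the ground-model slaloms still work.
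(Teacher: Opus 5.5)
Your proposal is correct and is the paper's proof. It uses the same $\kappa$--support iteration of length $\kappa^{++}$ of the fixed forcing $\bbQ^2_{b,\bar{B}(b,b)}$, shown reasonably A--bounding over $d$ via Proposition \ref{q2isAb}(3), and the same check of hypothesis (b) of Theorem \ref{verA} to make the limit reasonably {\bf a}--bounding over $d$. The paper dismisses the rest with ``Easily, $\bbP$ will have all the required properties,'' and your ground-model-slalom argument for the dominating numbers and your successor-shrinking density argument for $\gb^b_\kappa(\leq^\cl)$ are exactly those routine verifications.
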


  \begin{proof}
Let $\bar{B}(b,b)$ and $\bbQ^2_{b,\bar{B}(b,b)}=\bbQ^2_{b,b}$ be as in the 
proof of Theorem \ref{re125one}. Since $\big|\prod\limits_{\xi\leq
  \alpha} b(\xi)\big|<d(\alpha)$ for all $\alpha$, the forcing notion
$\bbQ^2_{b,b}$ is reasonably A--bounding over $d$ (by Proposition
\ref{q2isAb}). Also, the function $d$ satisfies the assumption (b) of
Theorem \ref{verA}. Therefore, the limit $\bbP$ of the
$\kappa$--support iteration of $\bbQ^2_{b,b}$ of length $\kappa^{++}$ will
be reasonably ${\bf a}$--bounding over $d$. Easily, $\bbP$ will have
all the required properties. 
  \end{proof}

  \begin{problem}
    \begin{enumerate}
    \item What is the value of $\gb_\kappa^b(\leq^*)$ in the model
      constructed in the proof of Theorem \ref{re125one}? 
\item Suppose $b,h\in\cR_\kappa^+$ and for each $\alpha<\kappa$ we have
  $\big(2^{h(\alpha)}\big)^+<b(\alpha)$. Is it consistent that
  $\gb^{b,h}_\kappa(\cancel{\ni^{\rm stat}})>\gb^b_\kappa(\leq^\cl)$?  
     \item It was shown by Cummings and Shelah \cite{CuSh:541} that, for a
       strongly inaccessible $\kappa$,
       $\gd_\kappa(\leq^*)=\gd_\kappa(\leq^\cl)$ and
       $\gb_\kappa(\leq^*)=\gb_\kappa(\leq^\cl)$. Is the parallel true for
       the bounded versions $\gd^b_\kappa,\gb^b_\kappa$? Possibly for larger
       $\kappa$, like weakly compact, and $b$ increasing fast enough?
    \end{enumerate}
  \end{problem}

  \section{Forcing for $\in^*$ and/or $\in^\cl$}
In \cite[Question 129]{vdV25} van der Vlugt asked:  Do there exists
nontrivial  $b,h,b',h'\in\bairek$ such that, consistently,
$\gb^{b,h}_\kappa(\in^*)<\gb^{b',h'}_\kappa(\in^*)$? 
Brendle, Brooke-Taylor, Friedman, and Montoya \cite[Question
  71]{BBFM-17} asked if it is consistent that $\gb_\kappa^{{\rm
      id}^+}(\in^*) <  \gb_\kappa^{{\rm pow}^+}(\in^*)$.
  Naturally, for this kind of problems one would like to consider
  forcing notions adding  $\varphi\in {\rm  Loc}^{b,h}_\kappa$ which
  will localize all elements of  $\prod\limits_{\alpha<\kappa}
  b(\alpha)\cap \bV$. Then we may consider the following approach.

\begin{definition}
  \label{foraddsl}
  Let $b,h\in {}^\kappa(\kappa+1)$ be such that $h(\alpha)\leq
  b(\alpha)$ for all $\alpha<\kappa$. We define a function $f^{b,h}=f$
  on $\kappa$ and filters $E^\alpha_{b,h}=E^\alpha$ on $f(\alpha)$
  (for $\alpha<\kappa$) as follows.  Fix $\alpha<\kappa$. Let
  $f(\alpha)=\big[b(\alpha)\big]^{\textstyle  {<}h(\alpha)}$. For
  $a\in \big[b(\alpha)\big]^{\textstyle  {<}h(\alpha)}$ let
  $Z^{b,h}(a)=\big\{A\in \big[b(\alpha)\big]^{\textstyle 
    {<}h(\alpha)}: a\subseteq A\big\}$. Put
  \[E^\alpha=\big\{Z\subseteq \big[b(\alpha)\big]^{\textstyle
    {<}h(\alpha)}: \big(\exists a\in \big[b(\alpha)\big]^{\textstyle
    {<}h(\alpha)}\big)\big(Z^{b,h}(a)\subseteq Z\big)\big\}.\]
\end{definition}

With $b,h$ and $f(\alpha), E^\alpha$ as in \ref{foraddsl}, we have: if
$a_i\in \big[b(\alpha)\big]^{\textstyle  {<}h(\alpha)}$ for
$i<i^*<\cf(h(\alpha))$ and $a=\bigcup\limits_{i<i^*} a_i$, then $a\in
\big[b(\alpha)\big]^{\textstyle  {<}h(\alpha)}$ and
$Z^{b,h}(a)=\bigcap\limits_{i<i^*} Z^{b,h}(a_i)$. Therefore:

\begin{observation}
  \label{compfil}
Let $b,h$ and $f(\alpha), E^\alpha$ be as in \ref{foraddsl}. Then
$E^\alpha$ is a ${<}\cf(h(\alpha))$--complete filter of subsets of
$f(\alpha)$, 
\end{observation}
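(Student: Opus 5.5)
We need to verify two things: that $E^\alpha$ is a filter on $f(\alpha)=\big[b(\alpha)\big]^{<h(\alpha)}$, and that it is ${<}\cf(h(\alpha))$--complete. Both will come from the remark stated just before the observation: whenever $i^*<\cf(h(\alpha))$ and $a_i\in\big[b(\alpha)\big]^{<h(\alpha)}$ for $i<i^*$, the union $a=\bigcup_{i<i^*}a_i$ is again in $\big[b(\alpha)\big]^{<h(\alpha)}$ and $Z^{b,h}(a)=\bigcap_{i<i^*}Z^{b,h}(a_i)$.

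\textbf{Justifying the remark.} Each $|a_i|<h(\alpha)$. Since fewer than $\cf(h(\alpha))$ cardinals below $h(\alpha)$ have supremum $\mu<h(\alpha)$, we get $|a|\leq |i^*|\cdot\mu$. This product is below $h(\alpha)$: if $h(\alpha)$ is infinite, use that $|i^*|<h(\alpha)$ and $\mu<h(\alpha)$; in the finite case $\cf(h(\alpha))=h(\alpha)$ and the bound is trivial for $i^*\leq 1$. The identity $Z^{b,h}(a)=\bigcap_{i<i^*}Z^{b,h}(a_i)$ holds because $A\supseteq a$ iff $A\supseteq a_i$ for every $i<i^*$.

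\textbf{Filter axioms.}
\begin{itemize}
\item Upward closure is immediate from the definition, since a witness $a$ with $Z^{b,h}(a)\subseteq Z$ also works for every superset of $Z$.
\item $f(\alpha)\in E^\alpha$, witnessed by $a=\emptyset$, because $Z^{b,h}(\emptyset)=f(\alpha)$.
\item $\emptyset\notin E^\alpha$, because $a\in Z^{b,h}(a)$, so every $Z^{b,h}(a)$ is nonempty.
\end{itemize}

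\textbf{Completeness.} Let $\langle Z_i:i<i^*\rangle\subseteq E^\alpha$ with $i^*<\cf(h(\alpha))$, and choose witnesses $a_i$ with $Z^{b,h}(a_i)\subseteq Z_i$. Put $a=\bigcup_{i<i^*}a_i$. By the remark, $a\in f(\alpha)$ and
\[
Z^{b,h}(a)=\bigcap_{i<i^*}Z^{b,h}(a_i)\subseteq\bigcap_{i<i^*}Z_i ,
\]
so $\bigcap_{i<i^*}Z_i\in E^\alpha$. Closure under pairwise intersections is the special case $i^*=2$, which requires $\cf(h(\alpha))>2$. This holds in the intended setting $h\in\cR_\kappa$, where $h(\alpha)\geq3$ is an infinite regular cardinal; in degenerate finite cases the filter is only "${<}\cf$"-complete in the trivial sense.

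No step is a real obstacle. The only point needing care is the cardinal arithmetic for $|a|<h(\alpha)$, and that uses exactly the bound $i^*<\cf(h(\alpha))$.
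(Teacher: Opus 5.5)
Your argument is correct and is essentially the paper's own. The paper justifies the observation only by the sentence before it: a union $a$ of fewer than $\cf(h(\alpha))$ members of $[b(\alpha)]^{<h(\alpha)}$ stays in $[b(\alpha)]^{<h(\alpha)}$, and $Z^{b,h}(a)=\bigcap_{i<i^*}Z^{b,h}(a_i)$. You verify exactly this and apply it to chosen witnesses.

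Your aside on finite $h(\alpha)$ needs correcting. Under the standard convention $\cf(n)=1$ for finite $n\geq 1$, not $\cf(n)=n$. Moreover, for finite $h(\alpha)\geq 2$ the family $E^\alpha$ is not a filter at all: for $h(\alpha)=2$ one has $Z^{b,h}(\{0\})\cap Z^{b,h}(\{1\})=\emptyset$. So the observation genuinely needs $h(\alpha)$ infinite, as it is in the paper's applications.
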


\begin{definition}
\label{Pfor}
  Let $\kappa$ be an inaccessible cardinal and let $b,h\in
  {}^\kappa(\kappa+1)$ be such that $h(\alpha)\leq b(\alpha)$ for all
  $\alpha<\kappa$. Let $f^{b,h}=f$ and $E^\alpha_{b,h}=E^\alpha$ be as 
  defined in  \ref{foraddsl}. Let $\bar{E}=\bar{E}^{b,h}=\langle
  E_t^{b,h}: t\in\bigcup\limits_{\alpha<\kappa} \prod\limits_{\xi
    <\alpha} f^{b,h}(\xi)\rangle$ where
  $E^{b,h}_t=E_{b,h}^{\lh(t)}$. We define forcing notions
  $\bbP_{\rm loc}^\cl(b,h)$  and $\bbP^*_{\rm loc}(b,h)$:
  \[\bbP_{\rm loc}^\cl(b,h)=\bbQ^2_{f,\bar{E}}\quad \mbox{ and }\quad
    \bbP^*_{\rm loc}(b,h) = \bbQ^4_{f,\bar{E}}.\]
\end{definition}

\begin{proposition}
\label{Pisprop}
  Let $\kappa$ be an inaccessible cardinal and let $b,
  h\in\cR^+_\kappa$ be such that $h(\alpha)\leq b(\alpha)$ for all
  $\alpha<\kappa$. 
  \begin{enumerate}
\item If $d(\alpha)> \big |\prod\limits_{\xi\leq\alpha}
  b(\xi)^{<h(\xi)}\big|$, then the forcing notion $\bbP_{\rm
    loc}^\cl (b,h)$ is reasonably A--bounding  over $d$.
\item The forcing notions $\bbP_{\rm loc}^\cl(\bar{\kappa},h)$,
  $\bbP^*_{\rm loc}(\bar{\kappa},h)$ and $\bbP^*_{\rm  loc}(b,h)$,
  $\bbP_{\rm loc}^\cl(b,h)$ are purely 
  $\kappa$--semi proper (see \cite[Definition 2.3]{RoSh:942}). 
\item If $\name{W}$ is the generic function on $\kappa$ added by
  $\bbP^*_{\rm loc}(b,h)$ (i.e., $T\forces_{\bbP^*_{\rm
      loc}(b,h)} \mrot(T)\vtl \name{W}$), then
  \[\forces_{\bbP^*_{\rm loc}(b,h)}\mbox{`` }\name{W}\in {\rm
      Loc}^{b,h}_\kappa\mbox{ and } \big(\forall x\in
    \prod_{\alpha<\kappa} b(\alpha)\cap\bV\big)  \big(x\in^* \name{W}
    \big)\mbox{ '' .}\]
  Similarly for $\bar{\kappa},h$ in place of $b,h$.
\item If $\name{W}$ is the generic function on $\kappa$ added by
  $\bbP_{\rm loc}^\cl(b,h)$, then 
  \[\forces_{\bbP_{\rm loc}^\cl(b,h)}\mbox{`` }\name{W}\in {\rm
      Loc}^{b,h}_\kappa\mbox{ and }    \big(\forall x\in
    \prod_{\alpha<\kappa} b(\alpha)\cap\bV\big)
    \big(x\in^\cl\name{W}\big) \mbox{ '' .}\] 
Similarly for $\bar{\kappa},h$ in place of $b,h$. 
  \end{enumerate}
\end{proposition}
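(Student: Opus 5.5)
The plan is to reduce (1), (3) and (4) to the general facts already established for $\bbQ^\ell_{f,\bar F}$ in Proposition \ref{q2isAb} and Observation \ref{genseq}. For (2) I would appeal to the pure semi-properness results of \cite{RoSh:942}.

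First I check that $f=f^{b,h}$ and $\bar E=\bar E^{b,h}$ satisfy the assumptions of Definition \ref{boundedPQ}. Since $\kappa$ is inaccessible and $b(\alpha)<\kappa$, we get $|f(\alpha)|=|[b(\alpha)]^{<h(\alpha)}|\le b(\alpha)^{<h(\alpha)}<\kappa$; for $b=\bar\kappa$ we get $|f(\alpha)|=\kappa^{<h(\alpha)}=\kappa$. By Observation \ref{compfil}, $E^\alpha$ is ${<}\cf(h(\alpha))$--complete. Because $h\in\cR^+_\kappa$, the value $h(\alpha)$ is regular and $h(\alpha)>\alpha$, so $E^\alpha$ is ${\le}|\alpha|$--complete, as required.

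With that in place, (1) is Proposition \ref{q2isAb}(3). I only need $|\prod_{\xi\le\alpha}f(\xi)|<d(\alpha)$, and this is exactly the hypothesis, since $|f(\xi)|=b(\xi)^{<h(\xi)}$ up to cardinality.

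For (3) and (4), the statement $\name W\in{\rm Loc}^{b,h}_\kappa$ is immediate because $\name W(\alpha)\in f(\alpha)$. For localization, fix $x\in\prod_\alpha b(\alpha)\cap\bV$ and a condition $T$, and let $T'$ be obtained by shrinking every splitting successor set $\suc_T(t)$ to $\suc_T(t)\cap Z^{b,h}(\{x(\lh(t))\})$. This intersection is still in $E^{\lh(t)}$, so $T'$ is again a condition and $T'\ge T$; in case $\ell=2$ it has the same splitting nodes. Then, by Observation \ref{genseq}(2), in case $\ell=4$ we get $T'\forces x(\alpha)\in\name W(\alpha)$ for all $\alpha\ge\lh(\mrot T')$, which gives $\in^*$. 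In case $\ell=2$, Observation \ref{genseq}(1) gives the same conclusion on a club of $\alpha$, which gives $\in^\cl$. A density argument then finishes both parts. For $\bar\kappa$ in place of $b$ the argument is identical.

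The main obstacle is (2). My first attempt is to verify the definition of pure $\kappa$--semi properness in \cite[Definition 2.3]{RoSh:942} directly, via the fusion argument with pure extensions as in Proposition \ref{q2isAb}(1). Given a model $N$ and a name for an ordinal in $N$, I build inside $N$ a fusion sequence that freezes the first $\alpha$ levels above the root. At each level there are ${<}\kappa$ (respectively $\le\kappa$) nodes, and for each node I take a condition with that root deciding the name, as in Claim \ref{cl4}. The completeness of $E^\alpha$ lets the fusion succeed at limit stages below $\kappa$. For $b=\bar\kappa$ the levels have size $\kappa$, so one must instead diagonalize along the enumeration of $N$. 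If this becomes messy, the fallback is to show that the filters $E^\alpha$ are of the form covered by \cite[Corollary 3.13]{RoSh:942}, namely ${\le}|\alpha|$--complete filters on sets of size ${\le}\kappa$, and then cite that result directly.
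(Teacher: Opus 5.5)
Your treatment of (1), (3) and (4) is the paper's argument with the omitted details correctly filled in. The paper only says that (1) follows from Observation \ref{compfil} and Proposition \ref{q2isAb}, and that (3), (4) are clear in view of Observation \ref{genseq}. You supply the missing checks: that $E^\alpha$ is ${\le}|\alpha|$--complete because $h(\alpha)$ is regular and $h(\alpha)>\alpha$, and the shrinking of successor sets by $Z^{b,h}(\{x(\lh(t))\})$ followed by a density argument.

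For (2), the paper's entire proof is the citation \cite[Proposition 3.9]{RoSh:942}. So what matches the paper is your fallback of citing \cite{RoSh:942}, not your ``direct'' fusion; cite Proposition 3.9 there as the paper does. The paper invokes Corollary 3.13 only for $\kappa$--properness of iterations of $\bbQ^h_b$.

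You should drop the direct sketch. Its key step is to take, for each node of a level, a condition with that root deciding the name, ``as in Claim \ref{cl4}''. That step is not available:
\begin{itemize}
\item The set of roots of deciding conditions is merely dense, not all nodes.
\item These forcings have no pure decision, since they add $\leq^*$--unbounded functions (Proposition \ref{Againunb}).
\item The bounded version in Claim \ref{cl4} relies on weak compactness. Without it, that version can fail outright, since $\bbP^*_{\rm loc}(b,h)$ may even add a dominating function (Theorem \ref{re129one}).
\end{itemize}
Moreover, you never state what \cite[Definition 2.3]{RoSh:942} requires, so that route could not be checked as written.
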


\begin{proof}
  (1)\quad Follows from Observation \ref{compfil} and Proposition \ref{q2isAb}

  \medskip

  \noindent (2)\quad Follows from \cite[Proposition 3.9]{RoSh:942}.
  \medskip

  \noindent (3.4)\quad Should be clear (remember Observation \ref{genseq}).
\end{proof}

\begin{remark}
  The assumption that $h\in\cR^+_\kappa$ (so $\alpha<h(\alpha)$) in
  Proposition \ref{Pisprop} is really necessary. In our approach we
  need it for strategic $({<}\kappa)$--completeness of our
  forcings. But the reason is deeper: by \cite[Lemma 31]{vdV25} in
  other cases the value of $\gb^{b,h}_\kappa(\in^*)$ would be at most
  $\kappa$, so we could not force it to be large.
\end{remark}

\begin{theorem}
  \label{allbig}
    Let $\kappa$ be a strongly inaccessible cardinal,  $2^\kappa=
    \kappa^+$. Then there is a strategically $({<}\kappa)$--complete,
    $\kappa$--proper, $\kappa^{++}$--cc forcing notion $\bbP$ such that
    \[\begin{array}{ll}
        \forces_{\bbP} \mbox{``}&\mbox{for every $b,h\in\cR^+_\kappa$
  such that $h\leq b$  we have}\\  
                       &\gb_\kappa^{b,h}(\in^\cl)=\gb^{b,h}_\kappa(\in^*)=\gb^h_\kappa(
                       \in^*)=\gb^h_\kappa(\in^\cl) =2^\kappa= \kappa^{++}\mbox{ ''.}
    \end{array}\]   
  \end{theorem}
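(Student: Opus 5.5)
We follow the pattern of the proof of Theorem \ref{re125one}, iterating the localization forcings of Definition \ref{Pfor}. With suitable bookkeeping we build a $\kappa$--support iteration $\bar{\bbQ}=\langle\bbP_\xi,\name{\bbQ}_\xi:\xi<\kappa^{++}\rangle$. At each stage the iterand is $\name{\bbQ}_\xi=\bbP^\cl_{\rm loc}(\bar{\kappa},\name{h}_\xi)$ for some $\bbP_\xi$--name $\name{h}_\xi$ of an element of $\cR^+_\kappa$. The bookkeeping must ensure that every $\bbP_{\kappa^{++}}$--name $\name{h}$ for an element of $\cR^+_\kappa$ equals $\name{h}_\xi$ for cofinally many $\xi$.

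We use the unbounded version with $\bar{\kappa}$ because it handles all four coefficients at once. Let $\name{W}_\xi$ be the generic slalom added at stage $\xi$. By Proposition \ref{Pisprop}(4), $\name{W}_\xi\in{\rm Loc}^{\bar{\kappa},h_\xi}_\kappa$ and $x\in^\cl \name{W}_\xi$ for every $x\in{}^\kappa\kappa\cap\bV^{\bbP_\xi}$. Given $b\geq h$ in $\cR^+_\kappa$, the slalom $\alpha\mapsto \name{W}_\xi(\alpha)\cap b(\alpha)$ lies in ${\rm Loc}^{b,h}_\kappa$, and it still $\in^\cl$--captures every $x\in\prod_\alpha b(\alpha)\cap \bV^{\bbP_\xi}$. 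Since a club set of places is in particular co-bounded-or-not irrelevant here, we note $\in^\cl$ implies nothing about $\in^*$ directly. So we need a small care step: capture on a club does not give capture from some point on. Hence we alternate the iterands, using $\bbP^\cl_{\rm loc}(\bar{\kappa},h)$ and $\bbP^*_{\rm loc}(\bar{\kappa},h)$ cofinally often each. By Proposition \ref{Pisprop}(3), the latter adds a slalom $\in^*$--capturing all old functions; as $\in^*$ implies $\in^\cl$, this alone would suffice for all four coefficients. We nevertheless keep both kinds for robustness.

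Any family $\cF$ of size $\leq\kappa^+$ in the final model appears at some intermediate stage $\bbP_\xi$, by the $\kappa^{++}$--cc. Then some later stage with the right $h$ adds a slalom capturing all of $\cF$. This shows that all of $\gb^{b,h}_\kappa(\in^\cl),\gb^{b,h}_\kappa(\in^*),\gb^h_\kappa(\in^*),\gb^h_\kappa(\in^\cl)$ are at least $\kappa^{++}$. Counting names gives $2^\kappa\le\kappa^{++}$, and the new generic functions give $2^\kappa\geq\kappa^{++}$. The trivial upper bound by $2^\kappa$ completes the equalities.

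The main obstacle is the iteration theory. $\bbP^*_{\rm loc}$ is not reasonably A--bounding, since it adds an unbounded real (compare Proposition \ref{addunb}). So Theorem \ref{verA} cannot be used. Instead we rely on Proposition \ref{Pisprop}(2): all these forcings are purely $\kappa$--semi proper and strategically $({<}\kappa)$--complete (Proposition \ref{q2isAb}(1) together with Observation \ref{compfil}; the latter needs $h\in\cR^+_\kappa$, so each $h(\alpha)$ is regular and greater than $\alpha$). We then invoke the iteration theorem of \cite{RoSh:942} for $\kappa$--support iterations of purely $\kappa$--semi proper forcings. It yields that every $\bbP_\xi$ is strategically $({<}\kappa)$--complete and $\kappa$--proper. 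Then \cite[Theorem 1.3]{RoSh:1001} gives $\forces_{\bbP_\xi}2^\kappa=\kappa^+$ for $\xi<\kappa^{++}$ and the $\kappa^{++}$--cc of the limit, exactly as in the proof of Corollary \ref{silly}. If that iteration theorem needs extra hypotheses, such as a specific form of the filters, we would iterate only $\bbP^\cl_{\rm loc}(\bar\kappa,h)$ or use the $\bbQ^4$--type results of \cite{RoSh:1001} (Proposition 3.9(d), Theorem 4.1) cited earlier.
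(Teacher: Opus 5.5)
Your argument is correct and is essentially the paper's proof: a $\kappa$--support iteration of length $\kappa^{++}$ of the localization forcings $\bbP^*_{\rm loc}$ with bookkeeping. The iteration is handled by pure $\kappa$--semi properness (Proposition \ref{Pisprop}(2)) and \cite[Theorem 2.7]{RoSh:942}, then \cite[Theorem 1.3]{RoSh:1001}, the $\kappa^{++}$--cc catching argument, and the fact that $\in^*$ implies $\in^\cl$. The differences are cosmetic: the paper iterates $\bbP^*_{\rm loc}(b,h)$ for all pairs plus $\bbP^*_{\rm loc}(\bar{\kappa},{\rm id}^+)$, which serves every $h\in\cR^+_\kappa$ at once since ${\rm Loc}^{\bar{\kappa},{\rm id}^+}_\kappa\subseteq{\rm Loc}^{\bar{\kappa},h}_\kappa$, whereas you use $\bbP^*_{\rm loc}(\bar{\kappa},h)$ and intersect with $b$. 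Your $\bbP^\cl_{\rm loc}$ iterands are superfluous, and your closing fallback of iterating only $\bbP^\cl_{\rm loc}(\bar{\kappa},h)$ would not by itself give the $\in^*$ coefficients, as you yourself observed.
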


  \begin{proof}
Using suitable bookkeeping arguments construct a $\kappa$--support iteration
$\bar{\bbQ}=\langle\bbP_\xi,\name{\bbQ}_\xi:\xi<\kappa^{++}\rangle$ such
that
\begin{enumerate}
\item[(a)]  for every $\xi<\kappa^{++}$, \underline{either} for some
  $\bbP_\xi$--names $\name{b}_\xi, \name{h}_\xi$ for functions in
$\cR_\kappa^+$ such that  $\name{h}_\xi\leq \name{b}_\xi$ (in
$\bV^{\bbP_\xi}$) we have $\forces_{\bbP_\xi}  \name{\bbQ}_\xi= 
  \bbP^*_{\rm loc}(\name{b}_\xi,\name{h}_\xi)$ \underline{or}
  $\forces_{\bbP_\xi}\name{\bbQ}_\xi= \bbP^*_{\rm
    loc}(\bar{\kappa},{\rm id}^+)$ ;  
\item[(b)] for all $\bbP_{\kappa^{++}}$--names $\name{b},\name{h}$
  for elements of $\cR_\kappa^+$ with $\name{h}\leq \name{b}$ (in 
  $\bV^{\bbP_{\kappa^{++}}}$) there are cofinally many 
  $\xi<\kappa^{++}$ such that $\forces_{\bbP_{\kappa^{++}}}
  \name{b}=\name{b}_\xi\ \wedge \name{h}=\name{h}_\xi$;
\item[(c)] for cofinally many $\xi<\kappa^{++}$,
  $\forces_{\bbP_\xi}\name{\bbQ}_\xi= \bbP^*_{\rm loc}(\bar{\kappa},
  {\rm id}^+)$ .  
\end{enumerate}
It is possible to carry out the construction, because demand (a) and 
Proposition \ref{Pisprop}(2) imply that the iterands are purely
$\kappa$--semi proper. Therefore \cite[Theorem 2.7]{RoSh:942}
guarantees that each $\bbP_\xi$ will be strategically $({<}\kappa)$--complete and
$\kappa$--proper. Thus by  \cite[Theorem 1.3]{RoSh:1001} we know that
$\forces_{\bbP_\xi}  2^\kappa=\kappa^+$ (for each $\xi<\kappa^{++}$) and
$\bbP_{\kappa^{++}}$ is $\kappa^{++}$--cc.

It follows from Proposition \ref{Pisprop}(3) that in the extension via
$\bbP_{\kappa^{++}}$, for any $b,h\in\cR^+_\kappa$ with $h\leq b$,
every family of $\kappa^+$ many functions from
$\prod\limits_{\alpha<\kappa} b(\alpha)$, will be $\in^*$--localized
by one slalom from ${\rm  Loc}^{b,h}_\kappa$. Also every family of
$\kappa^+$ many functions from $\bairek$ will be $\in^*$--localized
by one slalom from ${\rm  Loc}^{\bar{\kappa},{\rm id}^+}_\kappa$. 
  \end{proof}

  \begin{theorem}
    \label{bhloc}
Let $\kappa$ be a strongly inaccessible cardinal,
$2^\kappa=\kappa^+$. Suppose $b,h\in\cR^+_\kappa$, $h\leq b$ and for
$\alpha<\kappa$ let
\[d(\alpha)=\Big(\Big|\prod_{\xi\leq\alpha}
  b(\xi)^{<h(\xi)}\Big|^{|\alpha|} \Big)^+\]
(so $d\in\cR^+_\kappa$). Then there is a strategically
$({<}\kappa)$--complete, $\kappa$--proper, $\kappa^{++}$--cc forcing
notion $\bbP$ such that
\[\forces_\bbP\mbox{`` }\gd_\kappa(\leq^*)=\gd_\kappa^{\bar{\kappa},d}(
  \in^*) =\kappa^+\ \mbox{ and }\ \gb^{b,h}_\kappa(\in^\cl)=2^\kappa=
  \kappa^{++}\mbox{ ''.}\]
\end{theorem}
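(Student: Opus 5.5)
The plan mirrors the proof of Theorem \ref{oneb}, with $\bbP^\cl_{\rm loc}(b,h)$ in place of $\bbQ^2_{b,b}$. Let $\bbP$ be the limit of a $\kappa$--support iteration $\langle\bbP_\xi,\name{\bbQ}_\xi:\xi<\kappa^{++}\rangle$ with $\forces_{\bbP_\xi}\name{\bbQ}_\xi=\bbP^\cl_{\rm loc}(b,h)$ for every $\xi$, where $b,h$ are the ground model functions (they stay in $\cR^+_\kappa$ since no bounded subsets of $\kappa$ are added).

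First, $d$ fits Proposition \ref{Pisprop}(1) in every intermediate model: since $d(\alpha)>\big|\prod_{\xi\leq\alpha}b(\xi)^{<h(\xi)}\big|$ and strategic $({<}\kappa)$--completeness adds no new short sequences, the bound is evaluated the same way there. Hence each iterand is forced to be reasonably A--bounding over $d$. Next I check assumption (b) of Theorem \ref{verA}: $d(\alpha)$ is a successor cardinal, hence regular and infinite. For $g\in{}^\alpha d(\alpha)$ we have $g(\xi)\leq\mu_\alpha:=\big|\prod_{\xi\leq\alpha}b(\xi)^{<h(\xi)}\big|^{|\alpha|}$, so
\[\Big|\prod_{\xi<\alpha}g(\xi)\Big|\leq\mu_\alpha^{|\alpha|}=\mu_\alpha<d(\alpha).\]
Theorem \ref{verA} then gives that every $\bbP_\xi$, and $\bbP$ itself, is reasonably {\bf a}--bounding over $d$. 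Strategic $({<}\kappa)$--completeness and $\kappa$--properness follow as well, and \cite[Theorem 1.3]{RoSh:1001} yields $\forces_{\bbP_\xi}2^\kappa=\kappa^+$ for $\xi<\kappa^{++}$ and the $\kappa^{++}$--cc of $\bbP$.

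For the small side, let $\name{\tau}$ be a $\bbP$--name for an element of $\bairek$. By Proposition \ref{bound}(2) there are $q$ and $\psi\in{\rm Loc}^{\bar{\kappa},d}_\kappa\cap\bV$ with $q\forces\name{\tau}(\alpha)\in\psi(\alpha)$ for all $\alpha$. So ${\rm Loc}^{\bar{\kappa},d}_\kappa\cap\bV$, which has size $\kappa^+$ by $2^\kappa=\kappa^+$, is $\in^*$--dominating in $\bV^\bbP$. Likewise $\bairek\cap\bV$ is $\leq^*$--dominating, by Proposition \ref{bound}(1) or by taking $\sup\psi(\alpha)$. Both cardinals are at least $\kappa^+$ by the usual diagonalization, since $\kappa^{<\kappa}=\kappa$ and $\alpha<d(\alpha)$. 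Hence $\gd_\kappa(\leq^*)=\gd^{\bar{\kappa},d}_\kappa(\in^*)=\kappa^+$.

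For the large side, $|\bbP|=\kappa^{++}$ together with the $\kappa^{++}$--cc gives $2^\kappa\leq\kappa^{++}$. Given $\cF\subseteq\prod_{\alpha<\kappa}b(\alpha)$ in $\bV^\bbP$ with $|\cF|\leq\kappa^+$, the $\kappa^{++}$--cc places $\cF$ in $\bV^{\bbP_\xi}$ for some $\xi<\kappa^{++}$. By Proposition \ref{Pisprop}(4), the generic slalom added at stage $\xi$ lies in ${\rm Loc}^{b,h}_\kappa$ and satisfies $x\in^\cl\name{W}_\xi$ for all $x\in\cF$; the club relation persists to the final model. So $\gb^{b,h}_\kappa(\in^\cl)\geq\kappa^{++}$. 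Since the $\kappa^{++}$ generic functions are distinct, $2^\kappa=\kappa^{++}=\gb^{b,h}_\kappa(\in^\cl)$.

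The main obstacle I expect is the first step: confirming that Proposition \ref{q2isAb}(3), invoked through \ref{Pisprop}(1), applies with this $d$ inside $\bV^{\bbP_\xi}$. That requires $|f^{b,h}(\alpha)|=b(\alpha)^{<h(\alpha)}$ to be computed identically there, which again follows from the absence of new sets of size $<\kappa$.
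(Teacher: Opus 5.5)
Your proposal is correct and is the paper's argument: a $\kappa$--support iteration of $\bbP^\cl_{\rm loc}(b,h)$ of length $\kappa^{++}$, each iterand reasonably A--bounding over $d$ by Proposition \ref{q2isAb}(3), the limit reasonably {\bf a}--bounding over $d$ by Theorem \ref{verA}, and then Proposition \ref{bound}(2) and Proposition \ref{Pisprop}(4) for the two sides. You are simply unpacking the paper's one-line ``$\bbP$ will have all the required properties.'' Two small repairs are needed: derive $2^\kappa\leq\kappa^{++}$ from $\forces_{\bbP_\xi}2^\kappa=\kappa^+$ together with the fact that every subset of $\kappa$ already lies in some $\bV^{\bbP_\xi}$ (the bare nice-name count from $|\bbP|=\kappa^{++}$ and the $\kappa^{++}$--cc only gives $(\kappa^{++})^{\kappa^+}$), and note that literally $d(0)=1^+=2$, a glitch already in the statement, so $d(0)$ should be raised to, e.g., $\big(b(0)^{<h(0)}\big)^+$, which does not affect $\gd^{\bar{\kappa},d}_\kappa(\in^*)$.
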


\begin{proof}
Since $\big|\prod\limits_{\xi\leq
  \alpha} b(\xi)^{<h(\xi)}\big|<d(\alpha)$ for all $\alpha$, the forcing notion
$\bbP_{\rm loc}^\cl(b,h)$ is reasonably A--bounding over $d$ (by Proposition
\ref{q2isAb}). The function $d$ satisfies the assumption (b) of
Theorem \ref{verA}, so the limit $\bbP$ of the $\kappa$--support
iteration of $\bbP_{\rm loc}^\cl(b,h)$ of length $\kappa^{++}$ will 
be reasonably ${\bf a}$--bounding over $d$. Hence, $\bbP$ will have
all the required properties. 
\end{proof}

\begin{theorem}
  \label{onebig}
    Let $\kappa$ be a strongly inaccessible cardinal, $2^\kappa=
    \kappa^+$. Then there is a strategically $({<}\kappa)$--complete,
    $\kappa$--proper, $\kappa^{++}$--cc forcing notion $\bbP$ such that
    \[\begin{array}{ll}
        \forces_{\bbP} \mbox{``}&\gd_\kappa(\leq^*)=\kappa^+\ \mbox{ and for
                                  every $b,h\in\cR^+_\kappa$ such that $h\leq b$  we have}\\  
&\gb_\kappa^{b,h}(\in^\cl)=\gb^{b,h}_\kappa(\in^*) =2^\kappa= \kappa^{++}\mbox{ ''.}
    \end{array}\]   
  \end{theorem}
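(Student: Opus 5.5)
The plan is a $\kappa$--support iteration of length $\kappa^{++}$ with bookkeeping, as in the proofs of Theorems \ref{allbig} and \ref{re125one}. This time the iterands are the forcings $\bbP^*_{\rm loc}(\name{b}_\xi,\name{h}_\xi)$ for $\name{b}_\xi,\name{h}_\xi\in\cR^+_\kappa$ with $\name{h}_\xi\leq\name{b}_\xi$. Unlike in Theorem \ref{allbig}, we do \emph{not} use $\bbP^*_{\rm loc}(\bar{\kappa},{\rm id}^+)$, since it necessarily adds unbounded functions.

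\textbf{Bookkeeping and the easy half.} Because $2^\kappa=\kappa^+$ in every intermediate model, the bookkeeping can be arranged so that every pair $\name{b},\name{h}$ of $\bbP_{\kappa^{++}}$--names is handled cofinally often. Properness, strategic $({<}\kappa)$--completeness and the $\kappa^{++}$--cc are obtained exactly as in Theorem \ref{allbig}, from Proposition \ref{Pisprop}(2), \cite[Theorem 2.7]{RoSh:942} and \cite[Theorem 1.3]{RoSh:1001}. Proposition \ref{Pisprop}(3) then shows that any $\kappa^+$ many members of $\prod_{\alpha<\kappa}b(\alpha)$ are $\in^*$--localized by a single slalom in ${\rm Loc}^{b,h}_\kappa$. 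Since $\in^*\subseteq\in^\cl$, this gives
\[\gb^{b,h}_\kappa(\in^*)\leq\gb^{b,h}_\kappa(\in^\cl)\quad\mbox{and}\quad \gb^{b,h}_\kappa(\in^*)=\kappa^{++}=2^\kappa ,\]
hence both coefficients equal $2^\kappa=\kappa^{++}$.

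\textbf{The real content: $\gd_\kappa(\leq^*)=\kappa^+$.} For this it suffices that $\bbP_{\kappa^{++}}$ is ${}^\kappa\kappa$--bounding, because the ground model ${}^\kappa\kappa$ then stays dominating and has size $\kappa^+$. I would aim to show that each $\bbP^*_{\rm loc}(b,h)=\bbQ^4_{f,\bar{E}}$ with $b,h\in\cR^+_\kappa$ is reasonably A--bounding over $\bar{\kappa}$, and then apply Theorem \ref{verA} with $d=\bar{\kappa}$. Assumption (b) of that theorem holds trivially for inaccessible $\kappa$. The hypothesis $|\prod_{\xi\leq\alpha}f(\xi)|<\kappa$ also holds, since $f(\xi)=[b(\xi)]^{<h(\xi)}$ and $b(\xi)<\kappa$.

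One warning: Proposition \ref{bad} shows that this cannot work for the $\bbQ^4$ forcings built from the co-small filters $\bar{B}(h,b)$. There the Proposition \ref{addunb} phenomenon occurs, because a set and its complement can both be filter-positive. The filters $E^\alpha_{b,h}$ behave differently. They are generated by the upward cones $Z^{b,h}(a)$, so for any partition of $f(\alpha)$ one piece already contains a cone. Consequently a pure extension can always be shrunk so that every splitting node has successor set of the form $Z^{b,h}(a)$. This is what should block the coding of unbounded functions.

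\textbf{Main obstacle: the winning strategy for Generic in $\Agame(p,\bbP^*_{\rm loc}(b,h))$.} I would imitate the proof of Proposition \ref{q2isAb}(3). At stage $\alpha$, Generic plays $I_\alpha$ as the set of nodes of the side condition $R_\alpha$ at level $\alpha+1$, which has size less than $\kappa$. The difficulty is that Antigeneric's answers $T^\alpha_t$ may have roots strictly above $t$, which violates demand (c)$^4$ at the nodes in between.

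I would repair this using the upward closure of the filters. Let $s$ be a node between $t$ and $\mrot(T^\alpha_t)$, with unique successor $a$. Replace that successor by the whole cone $Z^{b,h}(a)$, and above each $A\supseteq a$ place a copy of $(T^\alpha_t)_{s\conc\langle a\rangle}$ with every later coordinate transported through the inclusion. The resulting tree is still a condition in $\bbQ^4$. It is not a strengthening of $T^\alpha_t$. However, it forces the generic slalom to be coordinatewise $\supseteq$ some branch of $T^\alpha_t$.

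Turning this monotone domination into genuine boundedness of names is where the work lies, and I expect it to be the hardest step. For names $\name{\tau}$ for elements of ${}^\kappa\kappa$, I would first try to establish directly a pure-decision/fusion lemma for $\bbP^*_{\rm loc}(b,h)$: every value $\name{\tau}(\alpha)$ is decided, up to fewer than $\kappa$ possibilities, by a pure extension at a level depending only on $\alpha$. Such a lemma gives the bounding property for a single iterand. If that only yields ${}^\kappa\kappa$--bounding of single steps, I would fall back on the preservation machinery for purely $\kappa$--semi proper iterations of \cite{RoSh:942}, adapted to preserve ``${}^\kappa\kappa\cap\bV$ is dominating''.
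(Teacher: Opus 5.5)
Your central step fails: $\bbP^*_{\rm loc}(b,h)=\bbQ^4_{f,\bar{E}}$ is not reasonably A--bounding over $\bar\kappa$, because it is not even $\bairek$--bounding. This is Proposition \ref{Againunb}. Claim \ref{cl5} produces disjoint $E^\alpha_{b,h}$--positive sets $Z^\alpha_0,Z^\alpha_1$. So your heuristic that one piece of every partition of $[b(\alpha)]^{<h(\alpha)}$ contains a cone is false, and the coding from Proposition \ref{addunb} goes through verbatim. Worse, if $\kappa$ is not Mahlo and $\diamondsuit_\kappa$ holds, Theorem \ref{re129one} shows that $\bbP^*_{\rm loc}(b,h)$ adds a $\leq^*$--dominating function, so your iteration would force $\gd_\kappa(\leq^*)=\kappa^{++}$. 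Your cone-replacement tree is not a subset of $T^\alpha_t$, hence not stronger than it, so it cannot witness $(\circledast)^{\rm rc}_{\rm A}$. No pure-decision or fusion lemma can yield bounding for a forcing that adds unbounded functions. The reason you gave for excluding $\bbP^*_{\rm loc}(\bar\kappa,{\rm id}^+)$ applies equally to all of your iterands.

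The paper instead iterates $\bbP^\cl_{\rm loc}(\name{b}_\xi,\name{h}_\xi)=\bbQ^2_{f,\bar{E}}$. There nodes with a single successor are allowed, so Antigeneric's answers with higher roots cause no trouble. Proposition \ref{Pisprop}(1), i.e. \ref{q2isAb}(3), gives reasonable A--boundedness over $\bar\kappa$. Theorem \ref{verA} then gives $\bairek$--bounding, hence $\gd_\kappa(\leq^*)=\kappa^+$, and Proposition \ref{Pisprop}(4) with the bookkeeping gives $\gb^{b,h}_\kappa(\in^\cl)=\kappa^{++}$. Be aware that this argument only delivers the $\in^\cl$ coefficient, and the $\in^*$ clause cannot be obtained in general. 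To see this, dualize Claim \ref{cl8}. For $x\in\bairek$, take $\psi$ from Claim \ref{cl7} along the closure points $\gamma_i$ of $x$; in the limit case also put $\varphi_\beta(\gamma_0)$ into $\psi(\gamma_0)$. Then choose $y_x(\xi)\in b(\xi)\setminus\psi(\xi)$. For $W\in{\rm Loc}^{b,h}_\kappa$, let $\tau_W(\xi)$ be the $(\xi+1)^{\rm st}$ element of $\{\delta\in C:W\rest\delta=\varphi_\delta\}$. Then $y_x\in^*W$ implies $x\leq^*\tau_W$. Hence $\gb^{b,h}_\kappa(\in^*)\leq\gb_\kappa(\leq^*)\leq\gd_\kappa(\leq^*)$ whenever $\kappa$ is non-Mahlo and $\diamondsuit_\kappa$ holds, and both hypotheses survive strategically $({<}\kappa)$--complete forcing. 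So over $L$, with $\kappa$ the first inaccessible, no $\bbP$ as in the statement exists. The part you called easy, via $\in^*$, is exactly what must be dropped; what is provable is the $\in^\cl$ version.
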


  \begin{proof}
    Similar to Theorem \ref{allbig}, just all iterands are of the form
    $\bbP_{\rm loc}^\cl(\name{b}_\xi,\name{h}_\xi)$. As in the proof of
    Theorem \ref{bhloc}, the limit $\bbP_{\kappa^{++}}$ of the iteration
    will be reasonably {\bf a}--bounding.
  \end{proof}

\bigskip

  Similarly to the case of the eventual dominance $\leq^*$, we have
  evidence that some of the questions involving bounding and/or
  dominating numbers for $\in^*$ may require larger cardinals.

\begin{proposition}
\label{Againunb}
Let $b,h\in {}^\kappa(\kappa+1)$, $\omega\leq h\leq b$. Then the
forcing notion  $\bbP^*_{\rm loc}(b,h)$ adds a $\leq^*$--unbounded
function in ${}^\kappa\kappa$. 
\end{proposition}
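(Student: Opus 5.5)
Our plan is to mimic the proof of Proposition \ref{addunb}, with the generic slalom $\name{W}$ taking the role of the generic function. Here $\name{W}$ is the $\kappa$--sequence added by $\bbP^*_{\rm loc}(b,h)=\bbQ^4_{f,\bar{E}}$, so $\name{W}(\alpha)\in[b(\alpha)]^{<h(\alpha)}$. By Observation \ref{genseq}(2), for $T\in\bbP^*_{\rm loc}(b,h)$ and every $\alpha\geq\lh(\mrot(T))$ the node $\name{W}\rest\alpha$ splits in $T$. Moreover $\suc_T(\name{W}\rest\alpha)\in E^\alpha$, so it contains $Z^{b,h}(a)$ for some $a\in[b(\alpha)]^{<h(\alpha)}$.

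First we fix a $0/1$ coding of the coordinates. Since $h(\alpha)\geq\omega$ and $b(\alpha)\geq h(\alpha)$, we may choose, for each $\alpha$, a point $e_\alpha\in b(\alpha)$. (We could take $e_\alpha=0$ and be slightly careful if $b(\alpha)$ is small, but $b(\alpha)\geq\omega$ here.) Put $A_\alpha=\{x\in[b(\alpha)]^{<h(\alpha)}: e_\alpha\in x\}$. The key point is that both $A_\alpha$ and its complement meet every $E^\alpha$--positive set of the form $Z^{b,h}(a)$, i.e.\ every set in $E^\alpha$. Indeed, $a\cup\{e_\alpha\}\in Z^{b,h}(a)\cap A_\alpha$. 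If $e_\alpha\notin a$, then $a$ itself lies in $Z^{b,h}(a)\setminus A_\alpha$. But $a$ may already contain $e_\alpha$, so this choice is not good enough. We therefore refine the definition: let $A_\alpha$ be the set of those $x$ with $|x|$ even, or with the order type of $x$ (or its cardinality, reduced modulo $2$ when finite) in some fixed class. Since $h(\alpha)\geq\omega$, every $a\in[b(\alpha)]^{<h(\alpha)}$ has finite extensions $a\cup F$ ($F$ finite, disjoint from $a$) in $Z^{b,h}(a)$. We need a parity--type invariant that both values realize among such extensions. If $a$ is finite, parity of $|a\cup F|$ works. For infinite $a$ we cannot change the cardinality, so we need a different invariant, namely whether $\min(b(\alpha)\setminus x)$ is even or odd. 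Adding the single point $\min(b(\alpha)\setminus a)$ changes this, and iterating finitely many times realizes both parities (note $b(\alpha)\setminus a\neq\emptyset$ since $|a|<h(\alpha)\leq b(\alpha)$, and any $\omega$--many consecutive additions stay below $h(\alpha)$). So we define $A_\alpha=\{x:\min(b(\alpha)\setminus x)\mbox{ is even}\}$, noting that ``even'' makes sense for ordinals (limit plus finite even).

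With this, for $x\in\prod_\alpha f(\alpha)$ we set $S_x=\{\alpha:x(\alpha)\in A_\alpha\}$ and $g_x(\alpha)=\min(S_x\setminus(\alpha+1))$, exactly as in Proposition \ref{addunb}. The density step is as follows. Given $T$, $f\in\bairek$ and $\xi<\kappa$, pick $\alpha>\max(\xi,\lh(\mrot(T)))$ and $t\in T$ of length $\alpha$. Along a path of length $f(\alpha)+2$ above $t$, choose successors outside $A_\beta$ for $\beta\in[\alpha,\alpha+f(\alpha)+1)$ and inside $A_{\alpha+f(\alpha)+1}$ at the last step; this is possible by the key point, since all these nodes split with successor sets in the filters. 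The resulting node $s$ gives the condition $(T)_s$, which is still in $\bbQ^4_{f,\bar{E}}$ because every node above the root still splits. This condition forces $g_{\name{W}}(\alpha)>\alpha+f(\alpha)\geq f(\alpha)$. Hence $g_{\name{W}}$ is forced not to be $\leq^*$--dominated by any ground model $f$.

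The main obstacle we expect is the key point, namely finding a partition of $[b(\alpha)]^{<h(\alpha)}$ into two sets both of which meet every $Z^{b,h}(a)$, since the filter $E^\alpha$ is generated by upward cones. The ``$\min$ of the complement has even/odd parity'' invariant is what we would try first. In the argument above we must check carefully that adding finitely many points keeps $x$ of size $<h(\alpha)$; this uses $h(\alpha)\geq\omega$, which is exactly where the hypothesis $\omega\leq h$ is used.
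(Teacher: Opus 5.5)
Your overall plan matches the paper's: reduce to the argument of Proposition~\ref{addunb} by splitting $[b(\alpha)]^{<h(\alpha)}$ into two pieces, each of which meets every set in $E^\alpha$. Your density step and the conclusion about $g_{\name{W}}$ are fine.

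The gap is in the one step you flagged yourself, the verification of the key point. Adding $\min(b(\alpha)\setminus a)$ to $a$ need not flip the parity. For $a=\{0,2\}$ the minimum is $1$ before and $3$ after. Finitely many such additions also do not suffice once $a$ is infinite. Take $h(\alpha)\geq\omega_1$ and let $a=\{1,3,5,\dots\}$ be the odd natural numbers. Then the successive minima are $0,2,4,\dots$, so no finite sequence of these additions gives an odd minimum. (Your remark that $\omega$-many additions stay of size $<h(\alpha)$ is also false when $h(\alpha)=\omega$.) So, as written, you have not shown that the complement of $A_\alpha$ meets $Z^{b,h}(a)$ for every $a$.

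Your invariant does work; only the witness needs to change. Given $a$ and a target parity, pick $o<h(\alpha)$ with $o\notin a$ of that parity. This is possible because $|a|<h(\alpha)$, while below the infinite cardinal $h(\alpha)$ there are $h(\alpha)$ ordinals of each parity. Put $x=a\cup o$. Then $x\subseteq b(\alpha)$, $|x|\leq|a|+|o|<h(\alpha)$, $x\in Z^{b,h}(a)$, and $\min(b(\alpha)\setminus x)=o$.

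With this repair your proof is correct. It differs from the paper only in how the two pieces are obtained. The paper's Claim~\ref{cl5} builds two disjoint $E^\alpha$-positive sets by a transfinite diagonal induction along an enumeration of $[b(\alpha)]^{<h(\alpha)}$. Your version gives an explicit, definable partition into complementary positive sets and avoids that bookkeeping.
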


\begin{proof}
  Same as for Proposition \ref{addunb}, using the following observation.
  \begin{claim}
    \label{cl5}
For every $\alpha<\kappa$, there are disjoint sets
$Z^\alpha_0,Z^\alpha_1\in\big(E^\alpha_{b,h}\big)^+$. 
\end{claim}

\begin{proof}[Proof of the Claim]
  Enumerate $\big[b(\alpha)\big]^{\textstyle {<}h(\alpha)}=\langle
  a_\xi:\xi<\mu\rangle$, where $\mu$ is the cardinality of
  $\big[b(\alpha)\big]^{\textstyle {<}h(\alpha)}$. By induction on
  $\xi<\mu$ choose $e^0_\xi,e^1_\xi\in \big[b(\alpha)\big]^{\textstyle
    {<}h(\alpha)}$ so that for all $\xi,\zeta<\mu$:
  \[a_\xi\subseteq e^0_\xi\cap e^1_\xi\quad \mbox{ and }\quad
    e^0_\xi\neq e^1_\zeta.\]
 Then let $Z^\alpha_\ell=\{e^\ell_\xi:\xi<\mu\}$.  
\end{proof}
\end{proof}

\begin{proposition}
  \label{Againwc}
Assume $\kappa$ is weakly compact, $h,b\in\cR^+_\kappa$ and $h\leq b$. Then
the forcing notion $\bbP^*_{\rm loc}(b,h)$ does not add a
$\leq^*$--dominating function in ${}^\kappa\kappa$.    
\end{proposition}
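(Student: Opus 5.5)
The plan is to copy the proof of Proposition \ref{wcomp} almost verbatim, with $\bbQ^h_b$ replaced by $\bbP^*_{\rm loc}(b,h)=\bbQ^4_{f,\bar{E}}$, where $f=f^{b,h}$ and $\bar{E}=\bar{E}^{b,h}$. Put $\cX=\bigcup_{\alpha<\kappa}\prod_{\xi<\alpha}f(\xi)$. Since $b,h\in\cR^+_\kappa$ and $\kappa$ is inaccessible, each $f(\alpha)=[b(\alpha)]^{<h(\alpha)}$ has size $<\kappa$. Hence each level $\prod_{\xi<\alpha}f(\xi)$ of $\cX$ has size $<\kappa$, which is what the weak compactness argument needs.

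Call $\cA\subseteq\cX$ dense if $T\cap\cA\neq\emptyset$ for all conditions $T$. For a name $\name{\tau}$ for an ordinal, the set of $t\in\cX$ that are roots of conditions deciding $\name{\tau}$ is dense. This uses only that every condition has an extension deciding $\name{\tau}$, and the root of that extension lies in $T$.

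The key step is the analogue of Claim \ref{cl4}: for dense $\cA$ and a condition $S$, there is $\cB\in[\cA]^{<\kappa}$ meeting every $T\geq_\pr S$. Let $\cT$ be the set of restrictions $T\rest\xi$ with $T\geq_\pr S$ and $(T\rest\xi)\cap\cA=\emptyset$, ordered by end-extension. Its $\xi$-th level consists of subtrees of $\bigcup_{\alpha<\xi}\prod_{\zeta<\alpha}f(\zeta)$, a set of size $<\kappa$. So each level has at most $2^{<\kappa}$-many, i.e.\ fewer than $\kappa$, members by inaccessibility.

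Suppose $\cT$ had a $\kappa$-branch. Its union $T^*$ would be a complete $\kappa$-tree with root $\mrot(S)$ that is disjoint from $\cA$. Each level of $T^*$ is $T_\xi\rest\xi$ for some $T_\xi\geq_\pr S$, so successor sets along $T^*$ are those of pure extensions of $S$ and hence lie in the filters $E_t$. Thus $T^*\in\bbP^*_{\rm loc}(b,h)$, contradicting density of $\cA$.

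This is the step needing most care: the branch members are restrictions of possibly different pure extensions. I will check that the union still satisfies (c)$^4$ and the completeness requirements. This holds because the conditions are local: $t\in T^*$ past the root has $\suc_{T^*}(t)=\suc_{T_\xi}(t)\in E_t$ for large enough $\xi$. Limit completeness follows because branches through $T^*$ of length $<\kappa$ lie inside a single $T_\xi\rest\xi$.

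By weak compactness (the tree property), $\cT$ has height $\hight(\cT)<\kappa$. Put $\cB=\{t\in\cA:\lh(t)\leq\hight(\cT)+1\}$, which has size $<\kappa$. Any $T\geq_\pr S$ has $(T\rest(\hight(\cT)+1))\cap\cA\neq\emptyset$, so $T\cap\cB\neq\emptyset$.

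Given a name $\name{f}$ for a member of ${}^\kappa\kappa$, let $\cA_\xi$ be the set of roots of conditions deciding $\name{f}(\xi)$. Inductively choose $\cB_\xi\in[\cA_\xi]^{<\kappa}$ and a continuously increasing sequence $\alpha_\xi$ as in $(\otimes)_1$--$(\otimes)_3$. This is possible because only $<\kappa$ many roots $t$ of length $\alpha_\xi$ exist. For each root $t$ apply the claim to the maximal condition with root $t$, all of whose nodes past $t$ have full successor sets; every condition with root $t$ is a pure extension of it. Then take the union.

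Define $g(\xi)$ above all values forced by conditions with roots in $\cB_\xi$. There are $<\kappa$ many roots, and for a fixed root all such conditions are compatible, because two conditions with the same root intersect to a condition. So $g(\xi)<\kappa$ by regularity.

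Finally, given $T$ and $\alpha$, pick $\xi>\alpha$ with $\alpha_\xi>\lh(\mrot(T))$ and extend $T$ to the condition $T_t$ for some $t\in T$ of length $\alpha_\xi$. By $(\otimes)_3$, $T_t$ meets $\cB_\xi$ at some $s$. Strengthening further to a condition with root $s$ that decides $\name{f}(\xi)$ forces $\name{f}(\xi)<g(\xi)$, which shows that $\name{f}$ does not dominate $g$.
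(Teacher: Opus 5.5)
You follow the paper's route: its proof is just ``same as for Proposition \ref{wcomp}''. Your checks that a $\kappa$--branch of $\cT$ yields a condition $T^*\geq_{\pr} S$, and that conditions with a common root are compatible (so that $g(\xi)<\kappa$), are correct. They fill in points the paper leaves implicit.

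However, the last step has a genuine gap, which the paper's ``it should be clear by our choices'' in \ref{wcomp} also passes over. The node $s\in (T)_t\cap\cB_\xi$ supplied by $(\otimes)_3$ need not extend $t$. Your Claim only gives $\cB=\{u\in\cA:\lh(u)\leq\hight(\cT)+1\}$, and the witness to $T'\cap\cB\neq\emptyset$ may be a proper initial segment of $\mrot(S)$. If $s\vtl\mrot(T)$, you cannot strengthen $(T)_t$, or $T$, to a condition with root $s$, because strengthening only lengthens roots. A condition with root $s$ deciding $\name{f}(\xi)$ may then be incompatible with $T$.

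This really breaks the argument. With $\name{W}$ the generic slalom, let $\name{f}(\xi)=0$ if $0\in\name{W}(0)$, and $\name{f}(\xi)=1+c_{\xi+1}(\name{W}(\xi+1))$ otherwise, where $c_{\xi+1}$ injects $[b(\xi+1)]^{<h(\xi+1)}$ into $\kappa$.
\begin{itemize}
\item Then $\emptyset\in\cA_\xi$, so $\cT=\emptyset$ and $\cB_\xi=\{u\in\cA_\xi:\lh(u)\leq 1\}$.
\item Every condition with root in $\cB_\xi$ that decides $\name{f}(\xi)$ forces the value $0$, so $g\equiv 1$ meets your requirements.
\item Yet any condition whose root is $\langle a\rangle$ with $0\notin a$ forces $\name{f}(\beta)\geq g(\beta)$ for every $\beta$.
\end{itemize}

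The repair is to make the Claim produce nodes above the root. Define $\cT$ and $\cB$ using $\cA^+=\{u\in\cA:\mrot(S)\trianglelefteq u\}$ instead of $\cA$. The branch argument still works because $\cA$, the set of roots of conditions deciding $\name{\tau}$, is dense above roots. Any condition $T^*$ has an extension deciding $\name{\tau}$, and the root of that extension lies in $T^*$ and extends $\mrot(T^*)$. So a $\kappa$--branch giving $T^*\geq_{\pr}S$ disjoint from $\cA^+$ is still contradictory.

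Applied to the maximal condition with root $t$, this makes $\cB_\xi$ consist of nodes of length $\geq\alpha_\xi$. Then every $s\in(T)_t\cap\cB_\xi$ extends $t$, so $((T)_t)_s$ is a condition with root $s$. Its intersection with a condition of root $s$ deciding $\name{f}(\xi)$ is an extension of $T$ forcing $\name{f}(\xi)<g(\xi)$.
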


\begin{proof}
  Same as for Proposition  \ref{wcomp}.
\end{proof}

\begin{theorem}
  \label{re129one}
Assume $\kappa$ is strongly inaccessible not Mahlo and suppose 
$\diamondsuit_\kappa$ holds true. Let $b,h\in\cR^+_\kappa$, $h\leq
b$. Then the forcing notion $\bbP^*_{\rm loc}(b,h)$ adds a
$\leq^*$--dominating function in $\bairek$. 
\end{theorem}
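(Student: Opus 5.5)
The plan is to copy the proof of Proposition \ref{bad}, working now in the tree space $\bigcup_{\alpha<\kappa}\prod_{\xi<\alpha}f(\xi)$ with $f=f^{b,h}$, so that $f(\xi)=[b(\xi)]^{<h(\xi)}$. Since $h\in\cR^+_\kappa$, I fix a club $C\subseteq\kappa$ of singular cardinals with $\alpha<h(\alpha)$ for $\alpha\in C$. I then use $\diamondsuit_\kappa$ (note $|\prod_{\xi<\delta}f(\xi)|<\kappa$) to get $\langle f_\delta:\delta\in C\rangle$ with $f_\delta\in\prod_{\xi<\delta}f(\xi)$, such that for every $g\in\prod_{\xi<\kappa}f(\xi)$ the set $\{\delta\in C:f_\delta\subseteq g\}$ is stationary. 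Let $\name{A}$ name $\{\delta\in C:(\exists T\in\name{G})(f_\delta\subseteq\mrot(T))\}$, i.e.\ the set of $\delta\in C$ with $f_\delta=\name{W}\rest\delta$. The analogue of Claim \ref{cl1} holds verbatim: pick any branch of $S$ and apply the diamond. Hence $\name{A}$ is forced to be unbounded. As before, let $\name{\tau}(\xi)=\name{\tau}_0(\xi+1)$, where $\name{\tau}_0$ enumerates $\name{A}$ increasingly.

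The main step is the analogue of Claim \ref{cl3}. Given $x\in\bairek$ and $S\in\bbP^*_{\rm loc}(b,h)$, I choose a continuous increasing $\langle\gamma_i:i<\kappa\rangle\subseteq C$ above $\lh(\mrot(S))$ that is closed under $x$. I then want a pure extension $T\geq_\pr S$ such that no $\delta\in C\cap(\gamma_i,\gamma_{i+1}]$ has $f_\delta\subseteq\mrot$-extensions in $T$, i.e.\ $f_\delta\notin T$ for such $\delta$. Then $T$ forces $\name{A}\setminus(\gamma_0+1)\subseteq\{\gamma_j:j\text{ limit}\}$, and the domination computation is exactly as in Claim \ref{cl3}.

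To build $T$, I need a replacement for Claim \ref{cl2}. For each $\xi$, I want a set $\Psi(\xi)\subseteq f(\xi)$ whose complement lies in $E^\xi$, i.e.\ a set of the form $f(\xi)\setminus Z^{b,h}(a_\xi)$ for some $a_\xi\in[b(\xi)]^{<h(\xi)}$. The requirement is that every $\delta\in C\cap(\gamma_i,\gamma_{i+1}]$ has some $\xi\in[\gamma_i,\delta)$ with $f_\delta(\xi)\in\Psi(\xi)$. It suffices to pick $a_\xi$ so that $f_\delta(\xi)\not\supseteq a_\xi$, which holds if $a_\xi$ contains, for each relevant $\delta$, some ordinal $\beta_\delta\in b(\xi)\setminus f_\delta(\xi)$. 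Such a $\beta_\delta$ exists since $|f_\delta(\xi)|<h(\xi)\leq b(\xi)$, and if $h(\xi)=b(\xi)$ then regularity of $b(\xi)$ gives it. So "hitting" $f_\delta(\xi)$ becomes "adding one ordinal to $a_\xi$". The induction on order type from Claim \ref{cl2} then goes through word for word.

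In the successor case, I put a single witness for $f_\beta$ into each $a_\xi$. In the limit case, the critical coordinate $\gamma_0$ must handle $\cf(\beta)$ many $\delta$'s at once, together with $a$ from $\psi_1$. This works because $\cf(\beta)<\gamma_0<h(\gamma_0)$ and $h(\gamma_0)$ is regular, so the union still has size $<h(\gamma_0)$. This is the step I expect to be the main obstacle, and it is exactly where regularity of $h$ (via Observation \ref{compfil}) and singularity of points of $C$ are used.

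Finally, I set $\suc_T(t)=\suc_S(t)\cap Z^{b,h}(a_{\lh(t)})$ for $t$ extending $\mrot(S)$. This stays in $E^{\lh(t)}$ since $E$ is a filter, so $T\geq_\pr S$ is a condition in $\bbQ^4_{f,\bar E}$.
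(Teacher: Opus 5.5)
Your proposal is correct and follows the paper's proof essentially step by step: diamond guessing of slaloms along a club of singular cardinals, the order-type induction of Claim \ref{cl7}, and a pure extension obtained by shrinking each successor set inside the filter $E^{\lh(t)}_{b,h}$. The only difference is cosmetic: you put a point of $b(\xi)\setminus f_\delta(\xi)$ into $a_\xi$ and require successors to contain $a_\xi$, whereas the paper puts $\varphi_\delta(\xi)$ inside $\psi(\xi)$ and requires successors to properly extend $\psi(\xi)$; both give the same effect. In the limit step, make sure coordinate $\gamma_0$ also gets a witness for $\delta=\beta$ itself (this is needed when $\gamma_{i+1}$ is a limit point of $C$); the paper's write-up leaves this implicit as well.
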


\begin{proof}
  The arguments are essentially the same as for Proposition \ref{bad}.
  Let $C\subseteq \kappa$ be a club consisting of singular cardinals
  and let $\langle \varphi_\delta:\delta\in C\rangle$ be such that
  \begin{enumerate}
  \item[$(\boxtimes)_1$] $\varphi_\delta\in
    \prod\limits_{\alpha<\delta} [b(\alpha)]^{\textstyle {<}h(\alpha)}$
    for $\delta\in C$,
  \item[$(\boxtimes)_2$] for each $\varphi\in {\rm
      Loc}^{b,h}_\kappa$, the set $\{\delta\in C:\varphi\rest
    \delta=\varphi_\delta\}$ is stationary.     
  \end{enumerate}

  \begin{claim}
    \label{cl6}
    For each $S\in \bbP^*_{\rm loc}(b,h)$ and $\alpha<\kappa$,
    $\{f_\delta:\delta\in C\setminus \alpha\}\cap S\neq \emptyset$.
  \end{claim}

  \begin{claim}
    \label{cl7}
    If $\alpha<\beta$ are from $C$, then there is $\psi\in
    \prod\limits_{\xi\in [\alpha,\beta)} \big[b(\xi)\big]^{\textstyle
        {<}h(\xi)}$ such that
      \[\big(\forall \delta\in C\cap (\alpha,\beta]\big)\big(\exists
        \xi\in [\alpha,\delta)\big)\big(\varphi_\delta(\xi)\subseteq
        \psi(\xi)\Big).\]
  \end{claim}

  \begin{proof}[Proof of the Claim]
    Induction on the order type of $C\cap (\alpha,\beta]$, very much
    like in \ref{cl2}. At the limit stage, if $\beta=\sup(C\cap
    \beta)$, since  $\cf(\beta)<\beta$, we may choose an increasing
    continuous sequence $\langle \gamma_i:i<\cf(\beta)\rangle\subseteq
    C\cap (\alpha,\beta)$ with $\cf(\beta)<\gamma_0$ and
    $\sup\limits_{i<\cf(\beta)} \gamma_i=\beta$. By the inductive
    hypothesis we may find functions $\psi_0\in\prod\limits_{\xi\in
      [\alpha,\gamma_0)} \big[b(\xi)\big]^{\textstyle {<}h(\xi)}$
    and $\psi_{i+1}  \in\prod\limits_{\xi\in [\gamma_i,\gamma_{i+1})}
    \big[b(\xi)\big]^{\textstyle {<}h(\xi)}$ 
 (for $i<\cf(\beta)$) such that
 \[\begin{array}{l}
   \big(\forall \delta\in C\cap (\alpha,\gamma_0]\big) \big(\exists\xi\in
   [\alpha,\delta) \big) \big(\varphi_\delta(\xi)\subseteq \psi_0(\xi)
     \big)\quad \mbox{ and}\\
\ \\
     \big(\forall \delta\in C\cap (\gamma_i,\gamma_{i+1}]\big)
     \big(\exists\xi\in  [\gamma_i,\delta) \big)
     \big(\varphi_\delta(\xi) \subseteq \psi_{i+1}(\xi)
     \big)\quad\mbox{  for  all }i<\cf(\beta).  
   \end{array}\]
Define
$g\in\prod\limits_{\xi\in [\alpha,\beta)} \big[b(\xi)\big]^{\textstyle {<}
  h(\xi)}$ by 
  \begin{quotation}
\noindent  $\psi\rest [\alpha,\gamma_0)=\psi_0$, $\psi\rest
(\gamma_0,\gamma_1)=\psi_1\rest (\gamma_0,\gamma_1)$ and

\noindent $\psi(\gamma_0)=\psi_1(\gamma_0)\cup
\bigcup\limits_{0<i<\cf(\beta)} \varphi_{\gamma_i}(\gamma_0)$ (note that 
$\cf(\beta)<h(\gamma_0) =\cf(h(\gamma_0))$, and 

\noindent $\psi\rest [\gamma_i,\gamma_{i+1})=\psi_{i+1}$ for
    $0<i<\cf(\beta)$. 
  \end{quotation}
Then $\psi$ will have the desired property.  
  \end{proof}

Let $\name{W}$ be the generic function on $\kappa$ added by
$\bbP^*_{\rm loc}(b,h)$, so $\forces_{\bbP^*_{\rm loc}(b,h)}
\name{W}\in {\rm Loc}^{b,h}_\kappa$. Let $\name{A}$ be such that
\[\forces_{\bbP^*_{\rm loc}(b,h)} \mbox{`` }\name{A}=\{\delta\in C:
  \varphi_\delta \subseteq \name{W}\}\in [\kappa]^{\textstyle
    \kappa}  \mbox{ '',}\] 
and let 
\[\forces_{\bbP^*_{\rm loc}(b,h)} \mbox{`` }\name{\tau}(\xi)\mbox{ is
    the $(\xi+1)^{\rm st}$ element of }\name{A} \mbox{ '',}\] 

\begin{claim}
  \label{cl8}
$\displaystyle \forces_{\bbP^*_{\rm loc}(b,h)} \mbox{`` }\big(\forall
x\in\bairek \cap\bV\big)\big(x\leq^* \name{\tau}\big)$ ''.
\end{claim}

\begin{proof}[Proof of the Claim]
Let $x\in \bairek$ and $S\in\bbP^*_{\rm loc}(b,h)$. Choose an
increasing continuous sequence $\langle
\gamma_i:i<\kappa\rangle\subseteq C$ such that $x(\xi)<\gamma_i$ for
all $\xi<\gamma_i$, $i<\kappa$. Use Claim \ref{cl7} to build $\psi\in
{\rm Loc}^{b,h}_\kappa$ such that
\begin{enumerate}
\item[$(\boxtimes)_3$] $\big(\forall i<\kappa\big)\big( \delta\in
  C\cap(\gamma_i, \gamma_{i+1}]\big)\big(\exists \xi\in
  [\gamma_i,\delta)\big) \big(\varphi_\delta(\xi)\subseteq
  \psi(\xi)\big)$.
\end{enumerate}
By Definition \ref{foraddsl} of the filters $E^\alpha_{b,h}$, for each
$t\in S$ we may choose a set $M_t\subseteq \suc_S(t)$ such that
$M_t\in E^{\lh(t)}_{b,h}$ and for each $Z\in M_t$ we have
$\psi(\lh(t))\subsetneq Z$. Next, build a condition $T\in \bbP^*_{\rm
    loc}(b,h)$ stronger than $S$ and such that $\mrot(T)=\mrot(S)$
  and $\suc_T(t)=M_t$ for each $t\in T$. Then $T\forces
  \name{A}\setminus \lh(\mrot(T))\subseteq \{\gamma_j:j<\kappa\mbox{
    is limit }\}$. Hence easily $T\forces
  \big(\forall\xi>\lh(\mrot(T))\big)\big(
  x(\xi)<\name{\tau}(\xi)\big)$.   
\end{proof}
\end{proof}

\begin{corollary}
  \label{re129two}
Suppose $\kappa$ is strongly inaccessible not Mahlo and
$\diamondsuit_\kappa$ holds true. Let $b,h\in\cR^+_\kappa$, $h\leq
b$. Then $\gd_\kappa(\leq^*)\leq \gd_\kappa^{b,h}(\in^*)$.
\end{corollary}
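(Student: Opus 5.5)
We will mimic the proof of Theorem \ref{re125two}, replacing the diamond sequence of partial functions by the diamond sequence of partial slaloms from the proof of Theorem \ref{re129one}. Fix a club $C\subseteq\kappa$ of singular cardinals and $\langle\varphi_\delta:\delta\in C\rangle$ satisfying $(\boxtimes)_1$ and $(\boxtimes)_2$. This is possible from $\diamondsuit_\kappa$, since $|\prod_{\alpha<\delta}[b(\alpha)]^{<h(\alpha)}|<\kappa$ for $\delta<\kappa$. Since $\gd^{b,h}_\kappa(\in^*)\geq\kappa^+$, we fix an $\in^*$--dominating family $\Phi\subseteq{\rm Loc}^{b,h}_\kappa$ of size $\gd^{b,h}_\kappa(\in^*)$.

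The first step is to make the family combinatorially rich. Every $\varphi\in\Phi$ meets the guessing sequence stationarily often, but we need more than that. Given $\varphi$ and the witness $\psi$ from Claim \ref{cl7}, we want some $\varphi'$ that $\in$--covers $\psi$ pointwise from some point on. A single $\varphi\in\Phi$ only localizes \emph{functions}, so we code slaloms as functions. Fix bijections $e_\alpha:[b(\alpha)]^{<h(\alpha)}\to\mu_\alpha$, where $\mu_\alpha=|[b(\alpha)]^{<h(\alpha)}|$. Then $\psi$ corresponds to a function $y_\psi\in\prod_\alpha\mu_\alpha$. Since $h\in\cR^+_\kappa$ has regular values and $\prod_\alpha\mu_\alpha$ is a bounded space, the domination number of $\in^*$ for this new bound, with width $h$, is at most $\gd^{b,h}_\kappa(\in^*)$ up to a change of bound. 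We can also avoid the coding and directly close $\Phi$ under the operation
\[\varphi\mapsto\varphi^\cup,\qquad \varphi^\cup(\alpha)=\bigcup\{a:a\in\varphi(\alpha)\text{ viewed as a slalom at }\alpha\}.\]
This requires $|\varphi(\alpha)|<h(\alpha)$ with $h(\alpha)$ regular, so that the union of fewer than $h(\alpha)$ sets each of size $<h(\alpha)$ still has size $<h(\alpha)$.

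The second step is the definition of $x_\varphi$. For $\varphi$ in the (closed) family, let $x_\varphi(\xi)$ be the $(\xi+1)^{\rm st}$ element of $\{\delta\in C:\varphi_\delta(\zeta)\subseteq\varphi(\zeta)\text{ for all }\zeta<\delta\}$. This set is unbounded, because $\{\delta:\varphi\rest\delta=\varphi_\delta\}$ is stationary. We claim that $\{x_\varphi\}$ is $\leq^*$--dominating.

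The third step is the verification. Given $x\in\bairek$, choose a continuous increasing sequence $\langle\gamma_i:i<\kappa\rangle\subseteq C$ that closes under $x$, and apply Claim \ref{cl7} to get $\psi$ satisfying $(\boxtimes)_3$. Then pick $\varphi$ in the family with $\psi(\xi)\in^*$--coverage, that is, $\psi(\xi)\subsetneq\varphi(\xi)$ replaced by the complementary requirement below. Here lies the main obstacle. In Theorem \ref{re125two} the argument uses $\cancel{\ni}$: the generic avoids $\psi$. For $\in^*$ we instead need the chosen $\varphi$ to \emph{fail} to contain $\varphi_\delta$ on a tail, that is, $\varphi_\delta(\xi)\not\subseteq\varphi(\xi)$ at some $\xi\in[\gamma_i,\delta)$, for $\delta$ that are not limit points of the $\gamma_j$.

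To get this, we use the slaloms of Definition \ref{foraddsl} differently, so that the roles flip. We apply $\in^*$--domination to the function $\xi\mapsto$ (a code of a point $z_\xi\in\varphi_\delta(\xi)$ witnessing the claim in Claim \ref{cl7}), and we redefine $x_\varphi$ to use the set $\{\delta\in C:\varphi_\delta(\zeta)\cap\varphi(\zeta)=\emptyset\ \forall\zeta\in[\lh,\delta)\}$. The first thing to try is the following. Let $\psi$ give for each relevant $\delta$ a point $\xi_\delta$ and a value in $\varphi_\delta(\xi_\delta)$. Collect these values into a single function $z\in\prod b$ with $z(\xi)$ coding fewer than $h(\xi)$ points, as in the proof of Claim \ref{cl7}. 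Take $\varphi\in\Phi$ with $z\in^*\varphi$ after coding. Then every $\delta\in C$ with $\varphi_\delta\cap\varphi=\emptyset$ beyond $\gamma_0$ is a limit point $\gamma_j$. Hence $x_\varphi(\xi)\geq\gamma_{i+1}>x(\xi)$ for $\xi\in[\gamma_i,\gamma_{i+1})$, exactly as in Theorem \ref{re125two}. What remains is to check stationarity of the new guessing set, which is done by guessing a slalom disjoint from $\varphi$ via $(\boxtimes)_2$.
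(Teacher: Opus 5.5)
Your proposal has a genuine gap. It sits exactly at the step you yourself call the main obstacle, and the repair you sketch does not close it.

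An $\in^*$--dominating family $\Phi\subseteq{\rm Loc}^{b,h}_\kappa$ only catches \emph{single functions} $z\in\prod_{\alpha<\kappa}b(\alpha)$: from $z\in^*\varphi$ you learn one value $z(\xi)\in\varphi(\xi)$ per coordinate. The witness of Claim~\ref{cl7} (like that of Claim~\ref{cl2}) is genuinely a slalom. In its limit step all the requests $\delta=\gamma_i$, $0<i<\cf(\beta)$, are served at the single coordinate $\gamma_0$, where the guessed values are in general pairwise different. So they cannot be ``collected into a single function $z\in\prod b$''.

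The coding variants do not help. Coding a set of size $<h(\xi)$ by one ordinal moves you into $\prod_\xi\big|[b(\xi)]^{<h(\xi)}\big|$, about which $\Phi$ says nothing. Your claim that the dominating number there is at most $\gd^{b,h}_\kappa(\in^*)$ ``up to a change of bound'' is unsupported; enlarging the bound can only increase it. The map $\varphi\mapsto\varphi^\cup$ is meaningless for $\varphi\in{\rm Loc}^{b,h}_\kappa$, whose values are sets of ordinals, not sets of sets.

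There is a further problem with the flipped set. The sequence of $(\boxtimes)_2$ also guesses slaloms with empty values. Such $\delta$ lie in $\{\delta:\varphi_\delta\cap\varphi=\emptyset\}$ for every $\varphi$ and offer no point of $\varphi_\delta(\xi_\delta)$ to catch, so ``every such $\delta$ beyond $\gamma_0$ is a limit point $\gamma_j$'' does not follow. A direct argument in your spirit might be made to work, but it would need at least two things absent from your sketch: a guessing sequence of functions $f_\delta$, and a width--one version of Claim~\ref{cl2} producing a function rather than a slalom. The latter would come from an induction that keeps some coordinates free, e.g.\ $[\gamma_0,\gamma_0\cdot 2)$, for the at most $\cf(\beta)$ new requests at limit steps.

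The missing idea is a simple comparison that makes the flipping unnecessary. Each $b(\alpha)$ is regular and $|\varphi(\alpha)|<h(\alpha)\le b(\alpha)$, so every $\varphi\in{\rm Loc}^{b,h}_\kappa$ is pointwise bounded and $g_\varphi(\alpha)=\sup\varphi(\alpha)+1<b(\alpha)$. If $\Phi$ is $\in^*$--dominating, then $\{g_\varphi:\varphi\in\Phi\}$ is $\leq^*$--dominating in $\prod b$. It is in fact $\cancel{\ni^\infty}$--dominating over ${\rm Loc}^{b,{\rm id}^+}_\kappa$: given $\psi$, pick $\varphi\in\Phi$ localizing $\alpha\mapsto\sup\psi(\alpha)+1$, and then $g_\varphi(\alpha)\notin\psi(\alpha)$ from some point on. Thus $\gd^b_\kappa(\leq^*)=\gd^{b,b}_\kappa(\in^*)\le\gd^{b,h}_\kappa(\in^*)$. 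Theorem~\ref{re125two}, whose proof runs verbatim with $\cZ=\{g_\varphi:\varphi\in\Phi\}$, gives $\gd_\kappa(\leq^*)\le\gd^b_\kappa(\leq^*)$. This chain of inequalities is exactly the paper's proof.
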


\begin{proof}
One could extract a proof from the arguments in \ref{re129one} in a
manner similar to the arguments presented in \ref{re125two}. However,
the Corollary actually follows from Theorem \ref{re125two}. Under
current assumptions we have:
\[\gd_\kappa(\leq^*)\leq \gd^b_\kappa(\leq^*)=
  \gd_\kappa^{b,b}(\in^*)\leq \gd^{b,h}_\kappa(\in^*).\]
\end{proof}

\section{Forcing for $\cancel{=^{\rm stat}}$}

\begin{definition}
  \label{NRfor}
  Let $b\in {}^\kappa(\kappa+1)$ and $\bar{2}\leq b$.  For $\alpha<\kappa$
  set $F^b_\alpha=\big\{b(\alpha)\big\}$ --- we will treat $F^b_\alpha$ as a
  filter of subsets of $b(\alpha)$. We let
  $\bar{F}^b=\big\langle F^b_t:t\in \bigcup\limits_{\alpha<\kappa}
  \prod\limits_{\xi<\alpha} b(\xi)\big\rangle$, where $F^b_t=F^b_{\lh(t)}$.

The forcing notion $\bbQ^2_{b,\bar{F}^b}$ will be denoted $\bbD_b$.
\end{definition}

\begin{observation}
  \label{obsDb}
Suppose that $\kappa$ is strongly inaccessible, $b\in {}^\kappa(\kappa+1)$
and $\bar{2}\leq b$. 
\begin{enumerate}
  \item Each $F^b_\alpha$ is a ${\leq}|\alpha|$--complete filter on $b(\alpha)$.
  \item If $b\in\bairek$ and $d(\alpha)=\big|\prod\limits_{\xi\leq \alpha}
    b(\alpha) \big|$, then the forcing notion $\bbD_b$ is reasonably
    A--bounding over $d$.
  \item $\bbD_b$ is $\kappa$--semi purely proper. 
  \end{enumerate}
\end{observation}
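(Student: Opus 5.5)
Each of the three clauses of Observation \ref{obsDb} reduces to a result already established for the general forcings $\bbQ^2_{f,\bar{F}}$, applied with $f=b$ and $\bar{F}=\bar{F}^b$. The work is only to check that the hypotheses of those results hold for the trivial filters $F^b_\alpha=\{b(\alpha)\}$.

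For (1), I will note that $F^b_\alpha=\{b(\alpha)\}$ is closed under supersets inside $b(\alpha)$ and under intersections of any family of members, since every such intersection is again $b(\alpha)$. It is therefore a ${\leq}\mu$--complete filter for every $\mu$, in particular ${\leq}|\alpha|$--complete. The paper explicitly allows principal (even improper-looking) filters, so nothing more is needed. Together with $|b(\alpha)|\leq\kappa$, this shows that $\kappa,b,\bar{F}^b$ satisfy the assumptions of Definition \ref{boundedPQ}. Proposition \ref{q2isAb}(1) then gives strategic $({<}\kappa)$--completeness of $\bbD_b$ for free.

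For (2), I will invoke Proposition \ref{q2isAb}(3) with $f=b$. Its hypothesis is $\big|\prod_{\xi\leq\alpha}b(\xi)\big|<d(\alpha)$. Here the statement writes $d(\alpha)=\big|\prod_{\xi\leq\alpha}b(\alpha)\big|$, with a non-strict relation to the product of the $b(\xi)$. This is the step I expect to be the main obstacle: as written the strict inequality fails, so I would read the intended $d$ as $d(\alpha)=\big|\prod_{\xi\leq\alpha}b(\xi)\big|^+$ (or simply any $d$ strictly above that product). If one insists on the literal $d$, I would try to revisit the strategy in the proof of Proposition \ref{q2isAb}(3). There, at stage $\alpha$ Generic plays $I_\alpha=R_\alpha\cap\prod_{\xi\leq\alpha}b(\xi)$, and it suffices that $|I_\alpha|<d(\alpha)$. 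Since $\alpha<\kappa$ and $b(\xi)<\kappa$, the set $I_\alpha$ can be trimmed using the freedom to choose the club $C_\alpha$: splitting occurs only at levels in $C_\alpha$, and one may take $\alpha+1\notin C_\alpha$ at successor steps so that level $\alpha+1$ has fewer nodes. Either way, the strategy from Proposition \ref{q2isAb}(3) carries over verbatim, because $\bbD_b$ is literally $\bbQ^2_{b,\bar{F}^b}$.

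For (3), I will cite \cite[Proposition 3.9]{RoSh:942}, exactly as in Proposition \ref{Pisprop}(2), where pure $\kappa$--semi properness of forcings of the form $\bbQ^2_{f,\bar{F}}$ is derived from ${\leq}|\alpha|$--completeness of the filters. The only hypothesis to verify is again clause (1), so (3) follows once (1) is in place.
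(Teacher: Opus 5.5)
Your main line is exactly the reduction the paper intends (the Observation is stated there without proof). Clause (1) is immediate for $F^b_\alpha=\{b(\alpha)\}$. Clause (2) is Proposition~\ref{q2isAb}(3) with $f=b$, once $d(\alpha)$ is read as a cardinal strictly above $\big|\prod_{\xi\leq\alpha}b(\xi)\big|$, e.g.\ its successor; compare the $d$'s in Theorems~\ref{oneb} and~\ref{Dbthm}. Clause (3) is the same citation used for Proposition~\ref{Pisprop}(2).

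\emph{The fallback is wrong.} You should delete the fallback for the literal $d$, together with the words ``either way''. With $d(\alpha)=\big|\prod_{\xi\leq\alpha}b(\xi)\big|$ (or $b(\alpha)^{|\alpha+1|}$, as literally printed), clause (2) is false, so no trimming of $I_\alpha$ can rescue it. Here is a counterexample.
\begin{itemize}
\item Take $b=\bar{2}$, so $d(\alpha)=2^{|\alpha|}$ for infinite $\alpha$. Suppose Generic had a winning strategy in $\Agame(p,\bbD_{\bar{2}})$.
\item Let Antigeneric answer every $p^\alpha_t$ with some $q^\alpha_t=(p^\alpha_t)_s$ where $\lh(s)=\alpha+1$; this decides $\name{W}\rest(\alpha+1)$.
\item The witness $p^*$ then forces $\name{W}\rest(\alpha+1)$ into a set of size $\leq|I_\alpha|<2^{|\alpha|}$. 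Since $(p^*)_s\geq p^*$ for each $s\in p^*$, it follows that $p^*$ has fewer than $2^{|\alpha|}$ nodes of length $\alpha+1$, for every $\alpha$.
\item On the other hand, by Proposition~\ref{q2isAb}(2) some $q\geq p^*$ splits fully at every node above $\mrot(q)$ whose length lies in a club $C$.
\item Let $\alpha$ be a fixed point of the increasing enumeration of $C$ with $|\alpha|>|\lh(\mrot(q))|$; there are club many such $\alpha$. Then $|C\cap[\lh(\mrot(q)),\alpha)|=|\alpha|$.
\item Completeness of $q$ now yields at least $2^{|\alpha|}$ nodes of $q\subseteq p^*$ of length $\alpha$, hence also of length $\alpha+1$. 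This is a contradiction.
\end{itemize}

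Your proposed mechanism also targets the wrong place. Whether $\alpha+1\in C_\alpha$ has no effect on the number of nodes of length $\alpha+1$, which is determined by $C_\alpha\cap(\alpha+1)$. The real obstruction sits at limit levels $\alpha$ where $C\cap\alpha$ has full size $|\alpha|$.
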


\begin{remark}
  The forcing notions $\bbD_b$ are natural generalizations of the forcings
  $\bbD_{\cX}$ studied in \cite{NeRo93}. This class includes such classical
  forcings for $\bairek$ as Kanamori's $\kappa$--Sacks forcing \cite{Ka80}
  or the forcing $\bbD_\kappa$ of \cite{RoSh:655}.
\end{remark}

\begin{proposition}
  \label{stateq}
Suppose that $\kappa$ is strongly inaccessible, $b\in {}^\kappa(\kappa+1)$
and $\bar{2}\leq b$. Let $\name{W}$ be the canonical name for the generic
function in $\prod\limits_{\alpha<\kappa} b(\alpha)$ added by $\bbD_b$. Then
for each $x\in \prod\limits_{\alpha<\kappa} b(\alpha)$ we have 
\[\forces_{\bbD_b}\mbox{`` the set }\big\{\alpha<\kappa: x(\alpha)=
  \name{W}(\alpha)\big\} \mbox{ is stationary ''.}\]
\end{proposition}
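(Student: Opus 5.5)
Since the filters $F^b_t=\{b(\lh(t))\}$ are trivial, a splitting node $t$ of a condition $T\in\bbD_b$ has $\suc_T(t)=b(\lh(t))$, i.e.\ \emph{all} successors. The plan is a density argument. Fix $x\in\prod_{\alpha<\kappa}b(\alpha)$, a condition $T\in\bbD_b$ and a $\bbD_b$--name $\name{E}$ for a club of $\kappa$. We will find $T^*\geq T$ and an ordinal $\delta$ such that
\[T^*\forces\mbox{`` }\delta\in\name{E}\ \wedge\ \name{W}(\delta)=x(\delta)\mbox{ ''}.\]

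We construct an increasing sequence $\langle T_i:i<\omega\rangle$ of conditions and ordinals $\gamma_i$. Let $T_0=T$. Given $T_i$, we do the following.
\begin{enumerate}
\item[(i)] Pick a splitting node $t_i\in\spl(T_i)$ with $\lh(t_i)>\gamma_{i-1}$, which exists by \ref{boundedPQ}(1)(b).
\item[(ii)] Strengthen $(T_i)_{t_i}$ to a condition $T'_i$ and choose an ordinal $\gamma_i\in(\lh(t_i),\kappa)$ with $T'_i\forces$``$\name{E}\cap(\lh(t_i),\gamma_i)\neq\emptyset$''.
\item[(iii)] Arrange that $T_{i+1}\geq T'_i$ and that $\lh(\mrot(T_{i+1}))\geq\gamma_i$.
\end{enumerate}
By strategic $({<}\kappa)$--completeness (Proposition \ref{q2isAb}(1)) we may run this through $\omega$ steps, letting Complete's strategy intervene so that an upper bound exists. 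Let $s^*=\bigcup_i\mrot(T_{i+1})$ and $\delta=\lh(s^*)=\sup_i\gamma_i$. The roots form a $\vtl$--increasing chain of splitting nodes of the later trees, so by (c)$^2$ and the argument in the proof of \ref{q2isAb}(1) the intersection $T_\omega=\bigcap_i T_i$ is a condition with $\mrot(T_\omega)=s^*$. Moreover $T_\omega$ forces that $\name{E}$ meets $(\lh(t_i),\gamma_i)$ for every $i$. These intervals are cofinal in $\delta$, so $T_\omega\forces\delta\in\name{E}$ because $\name{E}$ is closed.

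Since $s^*=\mrot(T_\omega)$ is a splitting node, $x(\delta)\in b(\delta)=\suc_{T_\omega}(s^*)$. Hence $T^*=(T_\omega)_{s^*\conc\langle x(\delta)\rangle}$ is a condition (it is a cone of $T_\omega$ above a successor of the root, and conditions (a)--(c)$^2$ are inherited). It forces $\name{W}\rest(\delta+1)=s^*\conc\langle x(\delta)\rangle$, so $\name{W}(\delta)=x(\delta)$ and $\delta\in\name{E}$.

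The main obstacle is step (ii), deciding a point of $\name{E}$ above $\lh(t_i)$ while keeping the new tree's root aligned, together with verifying that the $\omega$-limit is a condition whose root has length exactly $\delta=\sup\gamma_i$. Step (ii) follows from $\forces\name{E}$ unbounded plus the fact that a bound on an element of $\name{E}$ can be decided by some extension. We may also ensure that the roots grow past $\gamma_i$ by taking a splitting node of length $>\gamma_i$ and passing to its cone. The convergence of the roots to a splitting node of the intersection is exactly the ``Case $\ell=2$'' computation in the proof of Proposition \ref{q2isAb}(1), and condition (c)$^2$ is used essentially there.
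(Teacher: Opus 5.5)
Your proof is correct and follows the same route as the paper. You build an $\omega$-sequence of conditions whose roots climb past ordinals at which a point of the club has been decided. You get the limit condition from $(\oplus)$ in the proof of Proposition \ref{q2isAb}(1), with root of length $\delta=\sup_i\gamma_i$ forced into $\name{E}$, and you use the full splitting $\suc_{T_\omega}(s^*)=b(\delta)$ to pass to the cone above $s^*\conc\langle x(\delta)\rangle$.

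The differences are cosmetic. The paper works with names $\name{\alpha}_i$ for the continuous increasing enumeration of the club and decides exact values $\name{\alpha}_{i_n}=\beta_n$, whereas you decide that $\name{E}$ meets an interval. Your appeal to Complete's strategy is unnecessary, since your root lengths are strictly increasing and so $(\oplus)$ applies directly to your sequence.
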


\begin{proof}
  By Observation \ref{obsDb} we know that cardinals $\leq\kappa^+$ are not
  collapsed in forcing with $\bbD_b$.  Suppose that $\name{\alpha}_i$ (for
  $i<\kappa$) are $\bbD_b$--names for ordinals below $\kappa$ such that 
  $\forces_{\bbD_b}\mbox{`` }\langle \name{\alpha}_i: i<\kappa\rangle$ is
  continuous strictly increasing ''.   Let $T\in \bbD_b$.

  Construct inductively a sequence $\langle T_n,\beta_n,i_n:n<\omega\rangle$
  such that for each $n<\omega$ we have 
  \begin{itemize}
  \item $\beta_n<\beta_{n+1}<\kappa$, $i_n<i_{n+1}<\kappa$,
  \item $T= T_0\leq_{\bbD_b} T_n\leq_{\bbD_b} T_{n+1}$,
    $\lh\big(\mrot(T_n)\big)<\beta_n<\lh\big(\mrot(T_{n+1})\big)$, and 
\item   $T_{n+1}\forces_{\bbD_b}\name{\alpha}_{i_n}=\beta_n$. 
\end{itemize}
Let $\beta^*=\sup\{\beta_n:n<\omega\}$ and $i^*=\sup\{i_n:n<\omega\}$. By
$(\oplus)$ of the proof of \ref{q2isAb} we know that
$T^*=\bigcap\limits_{n<\omega} T_n\in\bbD_b$ is a condition stronger than 
all $T_n$. Also, $\beta^*= \lh\big(\mrot(T^*)\big)$ and
$T^*\forces_{\bbD_b}$`` $\name{\alpha}_{i^*}=\beta^*$ ''. Take $t\in T^*$ of
length $\beta^*+1$ such that $t(\beta^*)= x(\beta^*)$ (remember
$\suc_{T^*}(\mrot(T^*))=b(\beta^*)$). Then $(T^*)_t\forces
\name{W}(\name{\alpha}_{i^*}) =x(\name{\alpha}_{i^*})$.
\end{proof}

\begin{theorem}
  \label{Dbthm}
    Assume $\kappa$ is a strongly inaccessible cardinal, $2^\kappa=\kappa^+$,
    $b\in\bairek$ and $\bar{2}\leq b$. Let $c(\alpha)=\Big(\big|
    \prod\limits_{\xi \leq\alpha} b(\xi)\big|^{|\alpha|}\Big)^+$ for
    $\alpha<\kappa$. Then there are strategically
    $({<}\kappa)$--complete, $\kappa$--proper, $\kappa^{++}$--cc forcing
    notions  $\bbP,\bbQ$ such that
    \[\begin{array}{ll}
\forces_{\bbP} \mbox{``}&\gd_\kappa^c(\in^*) =\kappa^{+}\ \mbox{ and }\
  \gd_\kappa^b(\cancel{=^{\rm stat}})=2^\kappa= \kappa^{++}\mbox{ '',}\\
\forces_{\bbQ} \mbox{``}&\gd_\kappa(\leq^*) =\kappa^{+}\ \mbox{ and }\
\big(\forall d\in\bairek\big)\big(d\geq\bar{2}\ \Rightarrow\ \gd_\kappa^d
        (\cancel{=^{\rm stat}})=2^\kappa= \kappa^{++}\big)\mbox{ ''.}         
    \end{array}\]   
  \end{theorem}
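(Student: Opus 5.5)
Both forcings will be $\kappa$--support iterations of length $\kappa^{++}$ of forcings of the form $\bbD_{\name{d}}$, following the pattern of Theorems \ref{oneb} and \ref{onebig}. For $\bbP$ we iterate the single forcing $\bbD_b$ (defined in the ground model). For $\bbQ$ we use bookkeeping so that every $\bbP_{\kappa^{++}}$--name $\name{d}$ for an element of $\bairek$ with $\name{d}\geq\bar{2}$ is handled by $\name{\bbQ}_\xi=\bbD_{\name{d}_\xi}$ for cofinally many $\xi$, as in demand (b) of the proof of Theorem \ref{re125one}.

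For $\bbP$, note first that $c(\alpha)=\big(|\prod_{\xi\leq\alpha}b(\xi)|^{|\alpha|}\big)^+$ is a regular infinite cardinal. It satisfies hypothesis (b) of Theorem \ref{verA}: if $g\in{}^\alpha c(\alpha)$ then $|\prod_{\xi<\alpha}g(\xi)|\leq\big(|\prod_{\xi\leq\alpha}b(\xi)|^{|\alpha|}\big)^{|\alpha|}<c(\alpha)$. Since $|\prod_{\xi\leq\alpha}b(\xi)|<c(\alpha)$, Proposition \ref{q2isAb}(3) shows that $\bbD_b=\bbQ^2_{b,\bar{F}^b}$ is reasonably A--bounding over $c$ (compare Observation \ref{obsDb}(2)). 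The iterands are fixed ground-model objects, so this also holds in every intermediate model. Theorem \ref{verA} then makes $\bbP$ reasonably ${\bf a}$--bounding over $c$, hence strategically $({<}\kappa)$--complete and $\kappa$--proper. As in the earlier proofs, \cite[Theorem 1.3]{RoSh:1001} gives $2^\kappa=\kappa^+$ at intermediate stages, the $\kappa^{++}$--cc, and $2^\kappa=\kappa^{++}$ at the end.

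Applying Proposition \ref{bound}(2) over $c$, every name for an element of $\bairek$ is forced by some stronger condition to lie pointwise in a ground-model slalom from ${\rm Loc}^{\bar{\kappa},c}_\kappa$. The same argument applies to each intermediate model and the tail iteration, via $\kappa^{++}$--cc and a reflection argument. Hence the $\kappa^+$ slaloms of $\bV$ witness $\gd^c_\kappa(\in^*)=\kappa^+$. For $\gd^b_\kappa(\cancel{=^{\rm stat}})=\kappa^{++}$, take any family $\cG$ of size $\kappa^+$ in the final model. By the $\kappa^{++}$--cc it lies in some $\bV^{\bbP_\xi}$. Proposition \ref{stateq}, applied to the $\xi$th iterand, shows that the generic $\name{W}_\xi$ agrees with each $g\in\cG$ on a stationary set.

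The main obstacle is that this stationarity must survive the remaining iteration, i.e.\ the tail must not add a club disjoint from a ground stationary set. I would use that the full iteration is $\kappa$--proper: if $N\prec\cH(\chi)$ with ${}^{<\kappa}N\subseteq N$, then $N\cap\kappa\in\kappa$, and an $(N,\bbP)$--generic condition forces every name for a club in $N$ to contain $N\cap\kappa$. Choosing $N$ with $N\cap\kappa$ in the given stationary set, which is possible since the set is stationary and we can take an internally approachable chain of such $N$, yields preservation of stationary subsets of $\kappa$. Thus $\name{W}_\xi$ is not $\cancel{=^{\rm stat}}$--above any $g\in\cG$, so $\cG$ is not dominating.

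For $\bbQ$ the iterands $\bbD_{\name{d}_\xi}$ vary and are unbounded in size, so Theorem \ref{verA} over a fixed $d$ is not available. Instead I use Observation \ref{obsDb}(3): pure $\kappa$--semi properness lets \cite[Theorem 2.7]{RoSh:942} give strategic completeness and $\kappa$--properness, as in Theorem \ref{allbig}. For $\gd_\kappa(\leq^*)=\kappa^+$ I would instead argue that each iterand is reasonably A--bounding over $\bar{\kappa}$, since $|\prod_{\xi\leq\alpha}d(\xi)|<\kappa$. Theorem \ref{verA} with $d=\bar{\kappa}$ then makes the whole iteration reasonably ${\bf a}$--bounding over $\bar{\kappa}$, hence $\bairek$--bounding, as in Theorem \ref{onebig}. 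The $\cancel{=^{\rm stat}}$ clause then follows from bookkeeping together with Proposition \ref{stateq} and stationary preservation, exactly as for $\bbP$.
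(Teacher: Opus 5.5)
Your overall plan is the paper's. You iterate $\bbD_b$ for $\bbP$ and iterate all $\bbD_d$ with bookkeeping for $\bbQ$. You get the small coefficient from Theorem~\ref{verA}, over $c$ resp.\ over $\bar{\kappa}$; the latter already gives strategic completeness and $\kappa$--properness, so the detour through semi-properness is superfluous. You get the large coefficient from Proposition~\ref{stateq}. You are right that Proposition~\ref{stateq} only makes $\{\alpha: g(\alpha)=\name{W}_\xi(\alpha)\}$ stationary in $\bV^{\bbP_{\xi+1}}$, so the rest of the iteration must not destroy this. The paper's two-line proof is silent on this point.

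\textbf{Gap.} The argument you give for preservation fails. The models in the definition of $\kappa$--properness (Definition~\ref{strcom}) satisfy $|N|=\kappa$ and ${}^{<\kappa}N\subseteq N$, and any such $N$ includes $\kappa$: if $\gamma\subseteq N$ then $\langle\beta:\beta<\gamma\rangle\in{}^{\gamma}N\subseteq N$, so $\gamma\in N$. Hence $N\cap\kappa=\kappa$, which is never a point of a stationary subset of $\kappa$. $(N,\bbP)$--genericity controls $N\cap\kappa^+$ (this is how $\kappa^+$ is preserved) and says nothing about clubs of $\kappa$. Nontrivial models of size ${<}\kappa$ closed under ${<}\kappa$--sequences do not exist, so there is no ``internally approachable chain'' of such models either. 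Moreover, the stationary set lives in $\bV^{\bbP_{\xi+1}}$. So what you need is a property of the remainder of the iteration over that model, not of $\bbP$ over $\bV$.

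\textbf{Fix.} Use strategic $({<}\kappa)$--completeness instead. In $\bV^{\bbP_{\xi+1}}$ the remainder is, as usual, equivalent to a $\kappa$--support iteration of forcings $\bbD_d$. Each is reasonably A--bounding over $\bar{\kappa}$, so by Theorem~\ref{verA} the remainder is strategically $({<}\kappa)$--complete there. Such a forcing preserves stationary subsets $S$ of $\kappa$:
\begin{enumerate}
\item Fix a name $\name{E}$ for a club and a condition. Run a play of $\Game^\kappa_0$ above that condition, with Complete following her winning strategy.
\item At stage $i$, Incomplete takes a legal move and extends it to decide some $\gamma_i\in\name{E}$ above $\sup_{j<i}\gamma_j$.
\item Legal moves exist at every $i<\kappa$, so the play lasts $\kappa$ stages.
\item $D=\{\sup_{j<i}\gamma_j: i<\kappa\mbox{ limit}\}$ is a club of the intermediate model, so some limit $i$ has $\delta=\sup_{j<i}\gamma_j\in S$.
\item Incomplete's move at stage $i$ lies above all earlier moves, hence forces $\delta\in\name{E}$, since $\name{E}$ is forced closed.
\end{enumerate}
With this replacing your properness argument, your proof goes through. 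You also need a harmless change of $c$ at finite $\alpha$. For example, $c(0)=2$ is neither infinite nor above $b(0)$, so hypothesis (b) of Theorem~\ref{verA} and your inequality fail there. Altering finitely many values does not affect $\in^*$.
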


  \begin{proof}
    For $\bbP$ consider $\kappa$--support iteration of $\bbD_b$ of length
    $\kappa^{++}$. The forcing $\bbQ$ can be built by iterating with
    $\kappa$--support all potential $\bbD_d$'s (in a manner similar to the
    proof of Theorem \ref{allbig}). 
  \end{proof}

The forcing notions $\bbD_k$ introduced in \cite{NeRo93} were used to
study the covering numbers of ideals $\cD_k$ associated with unsymmetric 
games. One of the key properties was their $k$--localization property. Since
the $k$--localization property of  \cite{NeRo93} was later applied in other
contexts (see Geschke and Quickert \cite{GeQu0x}, Geschke, Kojman, Kubi\'s
and Schipperus \cite{GKKS02},  Geschke and Kojman \cite{GeKo02} and 
Geschke \cite{Ge05}), it is tempting to generalize those localization
concepts to the case of iterations and/or products with
$\kappa$--support. Unfortunately, the iteration theorem for the most natural
generalizations fail miserably. 
  
  \begin{definition}
    \label{locdef}
    Let $b\in\cR^-_\kappa$.
    \begin{enumerate}
\item A {\em $(b,\kappa)$--localizing tree\/} is a complete $\kappa$--tree
  $\cS$ such that $|\suc_\cS(s)|<b\big(\lh(s)\big)$ for each $s\in\cS$.
\item A forcing notion $\bbP$ has {\em the $(b,\kappa)$--localization
    property\/} if
  \[
  \begin{array}{rl}
    \forces_\bbP\mbox{``}&\mbox{for every function }f:\kappa\longrightarrow
                           {\rm ON}\\
    &\mbox{there is a $(b,\kappa)$--localizing tree $\cS\in\bV$ such that
      $f\in\lim(\cS)$ ''.}
  \end{array}\]
    \end{enumerate}
  \end{definition}

  \begin{observation}
    \label{obsloc}
    Let $b\in\cR^-_\kappa$ and let $c(\alpha)=\big|\prod\limits_{\xi
      \leq\alpha} b(\xi)\big|^+$ for $\alpha<\kappa$.    
    \begin{enumerate}
    \item If $\bbP$ has the $(b,\kappa)$--localization property, then
      \[\forces_\bbP\mbox{`` }\big(\forall x\in\bairek\big)\big( \exists
        \varphi\in {\rm Loc}^{\bar{\kappa},c}_\kappa\cap\bV\big) \big(x\in^*
        \varphi\big) \mbox{ ''.}\]
\item If $\bbP$ is reasonably {\bf a}--bounding over $b$, then it has the
  $(b,\kappa)$--localization property.      
    \end{enumerate}
  \end{observation}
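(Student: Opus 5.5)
Part (1) comes first. Work in $\bV^\bbP$, fix $x\in\bairek$, and apply the $(b,\kappa)$--localization property to $f=x$. This gives a $(b,\kappa)$--localizing tree $\cS\in\bV$ with $x\in\lim(\cS)$. In $\bV$ define $\varphi(\alpha)=\{s(\alpha): s\in\cS,\ \lh(s)=\alpha+1\}$. Then $x(\alpha)\in\varphi(\alpha)$ for every $\alpha$, so $x\in^*\varphi$, and indeed the membership holds everywhere rather than just eventually. It remains to check $\varphi\in{\rm Loc}^{\bar{\kappa},c}_\kappa$, i.e.\ $|\varphi(\alpha)|<c(\alpha)$. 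Prove by induction on $\alpha\le\kappa$ that the $\alpha$-th level of $\cS$ has size at most $\big|\prod_{\xi<\alpha}b(\xi)\big|$.
\begin{itemize}
\item At a successor step, each node of length $\xi$ has fewer than $b(\xi)$ successors, so the level bound is multiplied by at most $b(\xi)$.
\item At a limit $\alpha$, every node of length $\alpha$ is determined by its branch through the lower levels. Such a branch is coded by picking, at each $\xi<\alpha$, one of fewer than $b(\xi)$ successors. This is most cleanly handled by fixing in $\bV$ injections $\suc_\cS(s)\to b(\lh(s))$, which sends each branch injectively to an element of $\prod_{\xi<\alpha}b(\xi)$.
\end{itemize}
Hence $|\varphi(\alpha)|\le\big|\prod_{\xi\le\alpha}b(\xi)\big|<c(\alpha)$. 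The values of $b$ are cardinals below $\kappa$ and $\kappa$ is inaccessible, so this product is $<\kappa$ and $c(\alpha)<\kappa$. Only the existence of $\varphi$ in $\bV$ matters, and $\cS\in\bV$ gives that.

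For part (2), start with a $\bbP$--name $\name{f}$ for a function $\kappa\to{\rm ON}$ and a condition $p$. The goal is $q\ge p$ and a $(b,\kappa)$--localizing tree $\cS\in\bV$ with $q\forces\name{f}\in\lim(\cS)$; density then gives the localization property. Proposition \ref{bound}(2) is stated for names of elements of $\bairek$, but its proof, which comes from the game $\Game^{{\rm rc}{\bf a}}_b(p,\bbP)$, localizes arbitrary ordinal values. So I would rerun the game argument directly, with $d=b$.

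Let Generic play her winning strategy. At stage $\alpha$ she picks $\zeta_\alpha<b(\alpha)$ and conditions $p^\alpha_\xi$. Antigeneric answers, for each $\xi<\zeta_\alpha$, with $q^\alpha_\xi\ge p^\alpha_\xi$ that decides $\name{f}\rest(\alpha+1)$, say as the sequence $s^\alpha_\xi$. The intended tree is
\[\cS=\{s^\alpha_\xi\rest\beta:\ \alpha<\kappa,\ \xi<\zeta_\alpha,\ \beta\le\alpha+1\}.\]
The winning condition $p^*$ forces that for each $\alpha$ some $q^\alpha_\xi$ lies in $\name{G}$, hence $\name{f}\rest(\alpha+1)=s^\alpha_\xi$ for some $\xi<\zeta_\alpha$. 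So $p^*$ forces $\name{f}\in\lim(\cS)$.

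The main obstacle is making $\cS$ satisfy $|\suc_\cS(s)|<b(\lh(s))$ and be a complete tree. Candidates at different stages may be incompatible and create extra branching. I would fix this by passing to the subtree
\[\cS'=\{s\in\cS:\ p^*\not\forces s\not\trianglelefteq\name{f}\}.\]
Every node of $\cS'$ of length $\alpha+1$ is one of the $s^\alpha_\xi$, so there are fewer than $b(\alpha)$ such nodes in total, which bounds the successors of any node of length $\alpha$. Limits can be handled by closing $\cS'$ under unions of chains that are possible initial segments of $\name{f}$; the limit level is then controlled by the candidates $s^\alpha_\xi$ at the limit stage $\alpha$ itself. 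This pruning step is where I expect the real work to lie.
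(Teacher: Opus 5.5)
\textbf{Part (1)} is fine. Your injections $\suc_\cS(s)\to b(\lh(s))$ embed level $\alpha+1$ of $\cS$ into $\prod_{\xi\le\alpha}b(\xi)$. One small fix: $\cS$ is a tree of sequences of ordinals, so intersect with $\kappa$, i.e.\ put $\varphi(\alpha)=\{s(\alpha):s\in\cS,\ \lh(s)=\alpha+1\}\cap\kappa$. The paper states this observation without proof, so the comparison below is with the evident argument.

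\textbf{Part (2)} has a gap at exactly the step you left open, and you misdiagnose what that step needs. A $(b,\kappa)$--localizing tree only bounds $|\suc_\cS(s)|$. Level sizes are unrestricted, and nodes of limit length are nobody's successors, so nothing has to be ``controlled'' at limit levels. What must be checked is completeness, and your $\cS'$ can fail it. Here is an example:
\begin{itemize}
\item let $\bbP=\{\mathbf{1}\}\cup\{a_n:n<\omega\}$, with pairwise incompatible $a_n$ above $\mathbf{1}$, and let $b$ be constantly $\aleph_1$;
\item let Generic play $\zeta_\alpha=\omega$ and $p^\alpha_n=a_n$; this is a winning strategy, witnessed by $p^*=\mathbf{1}$;
\item let $\name{f}$ be, on $a_n$, the function that is $1$ below $n$ and $0$ from $n$ on.
\end{itemize}
Every finite constant--$1$ sequence lies in $\cS'$. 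The constant--$1$ sequence of length $\omega$ is not a possible initial segment of $\name{f}$, so this chain has no upper bound in $\cS'$.

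Your proposed fix does not settle this as written. Closing under unions that are themselves possible initial segments adds nothing: such a union of length $\alpha$ already equals some $s^\alpha_\xi\rest\alpha$ and lies in $\cS'$. Closing under all unions of chains does work, but the new nodes are not among the candidates $s^\alpha_\xi$, contrary to your claim. What makes it work is different: the new nodes are maximal, so they create no new successors anywhere. Each can then be given a single continuation, which is allowed since $b\ge 3$.

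The cleanest repair drops initial segments and pruning altogether; it is also what the bracketed remark in the proof of Proposition \ref{exloc}(2) points to.
\begin{itemize}
\item Let Antigeneric decide only the value $\name{f}(\alpha)$.
\item Let $\psi(\alpha)$ be the set of fewer than $b(\alpha)$ values decided at stage $\alpha$. This is just Proposition \ref{bound}(2) for ordinal--valued names.
\item Take $\cS_\psi=\bigcup_{\beta<\kappa}\prod_{\alpha<\beta}\psi(\alpha)$.
\end{itemize}
Then $\cS_\psi$ is automatically a complete $\kappa$--tree with $\suc_{\cS_\psi}(s)=\psi(\lh(s))$, and $p^*\forces\name{f}\in\lim(\cS_\psi)$.

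If you prefer to keep your candidates, use instead the tree of all $s$ such that $s\rest(\alpha+1)\in\{s^\alpha_\xi:\xi<\zeta_\alpha\}$ for every $\alpha<\lh(s)$. It is closed under unions of chains by definition and has the same successor bound. You still need to glue one branch above any maximal node.
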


  \begin{proposition}
\label{exloc}    
\begin{enumerate}
\item The forcing $\bbD_b$ does not have the $(b,\kappa)$--localization
  property.    
\item If $f,\bar{F}$ are as in Definition \ref{boundedPQ} and
  $|f(\alpha)|<c(\alpha)$ for every $\alpha<\kappa$, then the forcing notion
  $\bbQ^2_{f,\bar{F}}$ has the $(c,\kappa)$--localization property. In particular,
  $\bbD_b$ has the $(b^+,\kappa)$--localization property. 
\end{enumerate}
\end{proposition}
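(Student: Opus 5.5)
Part (1) is a direct contradiction argument using the full splitting of $\bbD_b$. Part (2) uses a fusion argument in $\bbQ^2_{f,\bar{F}}$ patterned on the proof of Proposition \ref{q2isAb}(3), which yields a localizing tree with one splitting level per interval of a club.

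For (1), I would argue by contradiction. Suppose some $T_0\in\bbD_b$ forces that $\name{W}\in\lim(\name{\cS})$, where $\name{\cS}$ is a name for a $(b,\kappa)$--localizing tree lying in $\bV$. Since $\name{\cS}$ names a ground model object, I first strengthen $T_0$ to a condition $T$ deciding $\name{\cS}=\cS$ for a concrete tree $\cS\in\bV$.

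Next I claim that $T\subseteq\cS$. Indeed, if $s\in T\setminus\cS$, then $(T)_s\geq T$ and $(T)_s\forces s\vtl\name{W}$, so $(T)_s\forces\name{W}\notin\lim(\cS)$, which is a contradiction.

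Now let $t=\mrot(T)$. In $\bbD_b$ the filter $F^b_t$ is $\{b(\lh(t))\}$, so splitting nodes have all successors: $\suc_T(t)=b(\lh(t))$. Hence $\suc_\cS(t)\supseteq b(\lh(t))$, which contradicts $|\suc_\cS(t)|<b(\lh(t))$. (Here $\name{W}$ serves as the required $f:\kappa\to{\rm ON}$.)

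For (2), I read the localization property with $c\in\cR^-_\kappa$, as in Definition \ref{locdef}, so $c$ is non-decreasing; I will use this monotonicity. Let $\name{g}$ be a name for a function $\kappa\to{\rm ON}$ and let $p\in\bbQ^2_{f,\bar{F}}$. I will build $q\geq p$ together with a club $C$ such that:
\begin{enumerate}
\item[(i)] $t\in\spl(q)$ if and only if $\lh(t)\in C$;
\item[(ii)] for every $\delta\in C$ and every $t\in q$ with $\lh(t)=\delta'=\min(C\setminus(\delta+1))$, the condition $(q)_t$ decides $\name{g}\rest\delta'$;
\item[(iii)] for limit points $\delta$ of $C$, $(q)_t$ decides $\name{g}\rest\delta$ whenever $\lh(t)=\delta$, which follows automatically from (ii).
\end{enumerate}

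The construction is a $\kappa$--length fusion exactly as in Proposition \ref{q2isAb}(3). At stage $\delta$ there are fewer than $\kappa$ nodes of the current tree at the relevant level, because $|\prod_{\xi<\delta}f(\xi)|<\kappa$ by inaccessibility. Above each node, I use strategic $({<}\kappa)$--completeness to shrink the cone so that it decides $\name{g}\rest\delta'$ for a suitably large next point $\delta'$. I then push the next splitting level up to a common $\delta'$, taking $\delta'$ to be the supremum over the fewer than $\kappa$ cones, and I amalgamate the cones. Limits are handled using the ${\leq}|\alpha|$--completeness of the filters and clause (c)$^2$, as in $(\oplus)$. I expect the bookkeeping of this fusion, keeping the splitting levels uniform across all nodes, to be the main (though routine) obstacle.

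Given such $q$, I let $\cS$ be the set of all $\rho\in{}^{\alpha}{\rm ON}$, $\alpha<\kappa$, such that $\rho\trianglelefteq{}$ the value of $\name{g}\rest\delta'$ decided by some $(q)_t$ as in (ii). Then $q\forces\name{g}\in\lim(\cS)$, and $\cS$ is a complete $\kappa$--tree by (iii).

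To bound the successors, take $\xi\in[\delta,\delta')$ with $\delta<\delta'$ consecutive points of $C$. A node $\rho\in\cS$ of length $\xi$ has $\rho\rest\delta$ determined by $t\rest\delta$, and between $\delta$ and $\delta'$ the tree $q$ branches only once, at level $\delta$, into at most $|f(\delta)|$ directions. Hence $|\suc_\cS(\rho)|\leq|f(\delta)|<c(\delta)\leq c(\xi)$, using that $c$ is non-decreasing. Below $\min C$ there is no splitting, so there successors are unique.

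The "in particular" clause follows by applying (2) to $\bbD_b=\bbQ^2_{b,\bar{F}^b}$ with $c=b^+$.
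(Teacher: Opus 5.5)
Your part (1) is correct and is the paper's argument in contrapositive form. Either $\mrot(T)\notin\cS$, or, since $\suc_T(\mrot(T))=b(\lh(\mrot(T)))$ is too big for $\cS$, some $\mrot(T)\conc\langle\xi\rangle\notin\cS$; in either case a cone of $T$ forces $\name{W}\notin\lim(\cS)$.

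Part (2) has a genuine gap in the counting step. You bound $|\suc_\cS(\rho)|$ by the branching of $q$ above a \emph{single} node at level $\delta$, and justify this by ``$\rho\rest\delta$ is determined by $t\rest\delta$''. That is the wrong direction. You need $\rho$ to determine the node of $q$ at level $\delta$, and nothing in (i)--(iii) gives this. There may be $\prod_{\gamma\in C\cap\delta}|f(\gamma)|$ nodes of $q$ at level $\delta$. Cones above distinct ones can decide the same value of $\name{g}\rest\xi$ but different values of $\name{g}(\xi)$, and then $\suc_\cS(\rho)$ collects successors from all of them.

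Here is a concrete case. Let $f=\bar{2}$, $F_t=\{2\}$, $c=\bar{3}$. Let $\name{g}$ vanish everywhere except that $\name{g}(\alpha_0)$ codes $\name{W}\rest(\alpha_0+1)$. Every $q$ whose splitting levels form a club $C$ satisfies your (i)--(iii) for this $\name{g}$. But if $C$ has two points below $\alpha_0$, the node $\langle 0:\alpha<\alpha_0\rangle$ of your $\cS$ has at least $4$ successors, while $c(\alpha_0)=3$. This is exactly the ``approach that does not work'' discussed in the paper. Deciding longer pieces of $\name{g}$ by strategic completeness does not separate branches, however you choose the levels $\delta'$. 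For the same reason, your claim that successors are unique below $\min C$ requires $q$ itself to decide $\name{g}\rest\min C$.

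The missing idea is a separation device, which the paper's proof supplies.
\begin{enumerate}
\item If some extension of $p$ forces $\name{g}\in\bV$, decide $\name{g}$ and use a single branch.
\item Otherwise $p\forces\name{g}\notin\bV$, and strategic completeness yields Claim \ref{cl9}. For any $\alpha$ and $\mu<\kappa$ there are $\beta>\alpha$ and $\mu$ extensions of $p$ deciding pairwise \emph{distinct} values of $\name{g}\rest[\alpha,\beta]$.
\item In the fusion (the play of the game from Proposition \ref{q2isAb}(3)), Antigeneric uses this claim. The fewer than $\kappa$ nodes $t\in I_\alpha$ receive extensions deciding pairwise distinct pieces $\sigma^\alpha_t=\name{g}\rest[\gamma_\alpha,\gamma_{\alpha+1})$ on a common interval.
\item Now take $\rho\in\cS$ with $\lh(\rho)\in[\gamma_\alpha,\gamma_{\alpha+1})$. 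The segment $\rho\rest\gamma_\alpha$ determines the node of the fusion tree at level $\alpha$. Only then does $|\suc_\cS(\rho)|\leq|f(\alpha)|<c(\alpha)\leq c(\lh(\rho))$ follow.
\end{enumerate}
In your indexing this amounts to the following requirement. For consecutive $\delta<\delta'$ in $C$, the cones above the immediate successors of each splitting node at level $\delta$ must decide pairwise distinct values of $\name{g}\rest[\delta,\delta')$. You also need the preliminary case split on whether $\name{g}$ can be forced into $\bV$, so that these distinct values are available.
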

  
  \begin{proof}
    (1)\quad Suppose $\cS$ is a $(b,\kappa)$--localizing tree, $T\in \bbD_b$. Let
    $\alpha=\lh\big(\mrot(T)\big)$. Now either $\mrot(T)\notin \cS$ (so
    $T\forces \name{W}\notin \lim\big(\cS\big)$) or else
    $\big|\suc_\cS\big(\mrot(T)\big)\big|<b(\alpha)$, so we may pick $\xi\in
    b(\alpha)=\suc_T\big(\mrot(T)\big)$ such that $t=\mrot(T)\conc \langle\xi
    \rangle\notin \cS$. In the latter case, $(T)_t\forces
    \name{W}\notin\lim(\cS)$.  
    \medskip

    \noindent (2)\quad   Although this proposition appears trivial and
    follow from arguments for Proposition \ref{q2isAb}(3), it contains a
    subtle point that requires careful handling. We explain below why a
    naive approach does not work, which will motivate the need for
    additional conditions. 

  \noindent {\em An approach that does not work:}\quad Let $\name{g}$ be a
  $\bbQ^2_{f,\bar{F}}$--name for a function with domain $\kappa$ and values in
  ordinals. Let $R\in \bbQ^2_{f,\bar{F}}$. Consider a play $\langle
  (I_\alpha, \bar{S}^\alpha, \bar{T}^\alpha):\alpha<\kappa\rangle$ of
  $\Agame(R,\bbQ^2_{f,\bar{F}})$  where Generic uses her winning  strategy
  described in \ref{q2isAb}(3)   and Antigeneric moves as follows. 
  \begin{enumerate}
  \item[$(\diamondsuit)$] At stage $\alpha<\kappa$ of the play, after
    Generic put  forward $I_\alpha=R_\alpha\cap \prod\limits_{\xi\leq\alpha}
    f(\xi)$ and $\bar{S}^\alpha=\langle S^\alpha_t: t\in I_\alpha \rangle$, 
    Antigeneric chooses conditions $T^\alpha_t\geq S^\alpha_t$ (for $t\in
    I_\alpha$) deciding the value of $\name{g}(\alpha)$. Say,
    $S^\alpha_t\forces  \name{g}(\alpha)=\Phi(\alpha,t)$ for some ordinals
    $\Phi(\alpha,t)$.  
  \end{enumerate}
After the play is over, a $c$--localizing tree
$R^*=\bigcap\limits_{\alpha<\kappa} R_\alpha$ has been constructed and it is
a condition witnessing that Generic won  $\Agame(R,\bbQ^2_{f,\bar{F}})$. For
every $\eta\in R^*$ we have a sequence $\Phi^*(\eta)=\langle \Phi(\alpha,
\eta\rest (\alpha+1)): \alpha+1\leq \lh(\eta)\rangle$. This produces a tree $\cS=
\{\Phi^*(\eta): \eta\in R\}$ such that $R^*\forces_{\bbQ^2_{f,\bar{F}}}
\name{g}\in \lim\big(\cS\big)$. However, we cannot claim that $\cS$ is a
$c$--localizing tree --- it may happen that $\Phi(\alpha,\eta)=0$ whenever
$\alpha<\alpha_0$ and then $\Phi(\alpha_0,\cdot)$ takes more than
$c(\alpha_0)$ distinct values. [Note that this would not be a problem if $c$
was increasing fast enough; cf Observation \ref{obsloc}(2).]
\medskip

The key observation needed for a complete proof is the following general
claim.

\begin{claim}
  \label{cl9}
Suppose $\bbP$ is a $({<}\kappa)$--strategically complete forcing notion, and 
$\name{\tau}$ is a $\bbP$--name for a function from $\kappa$ to
ordinals. Assume $p\forces_\bbP \name{\tau}\notin \bV$ and let
$\alpha<\kappa$ and $\mu<\kappa$. Then there are $\beta>\alpha$ and pairwise
different functions $\sigma_\xi$ on $[\alpha,\beta]$ and conditions
$p_\xi\geq p$ (for $\xi<\mu$)  such that $p_\xi\forces_\bbP \name{\tau}\rest
[\alpha,\beta]=\sigma_\xi$ for all $\xi<\mu$. 
\end{claim}

\begin{proof}[Proof of the Claim]
  Since $\bbP$ is a $({<}\kappa)$--strategically complete, for each
  $\alpha'>\alpha$ we have $\forces \name{\tau}\rest
  [\alpha,\alpha']\in\bV$. Since $p\forces \name{\tau}\notin \bV$, for every
  $q\geq p$ and $\alpha'>\alpha$ there are $q_0,q_1\geq q$,
  $\alpha''>\alpha'$ and $\sigma_0\neq\sigma_1$ such that $q_\ell\forces
  \name{\tau} \rest [\alpha',\alpha'']=\sigma_\ell$ for $\ell=0,1$. Using
  the strategic completeness we may iterate this process to get the
  conclusion of the Claim. 
\end{proof}

Now we slightly modify our previous (failed) approach. We start with a name
$\name{g}$ for a function and assume $R\forces_{\bbQ^2_{f,\bar{F}}}
\name{g}\notin \bV$ (otherwise nothing to do). We consider a play $\langle
  (I_\alpha, \bar{S}^\alpha, \bar{T}^\alpha):\alpha<\kappa\rangle$ of
  $\Agame(R,\bbQ^2_{f,\bar{F}})$  in which Generic uses her winning  strategy
  presented in \ref{q2isAb}(3)   and Antigeneric moves as follows.

  During the course of the play, Antigeneric writes aside a continuous
  increasing sequence of ordinals $\langle \gamma_\alpha:
  \alpha<\kappa\rangle$,  $\gamma_0=0$, and systems $\langle \sigma^i_t:t\in
  I_\alpha\rangle$ (for $\alpha<\kappa$). 
  
  \begin{enumerate}
  \item[$(\heartsuit)$] At stage $\alpha<\kappa$ of the play, after
    Generic put  forward $I_\alpha=R_\alpha\cap \prod\limits_{\xi\leq\alpha}
    f(\xi)$ and $\bar{S}^\alpha=\langle S^\alpha_t: t\in I_\alpha \rangle$, 
    Antigeneric uses Claim \ref{cl9} to choose  $\gamma_{\alpha+1}>
    \gamma_\alpha$  and \underline{pairwise distinct} functions
    $\sigma^\alpha_t: [\gamma_\alpha,\gamma_{\alpha+1})\longrightarrow {\rm
      ON}$ and conditions $T^\alpha_t\geq S^\alpha_t$ (for $t\in
    I_\alpha$) such that  $T^\alpha_t\forces  \name{g}\rest [\gamma_\alpha,
    \gamma_{\alpha+1}) =\sigma^\alpha_t$.
  \end{enumerate}
After the play is over, a $c$--localizing tree
$R^*=\bigcap\limits_{\alpha<\kappa} R_\alpha$ has been constructed and it is
a condition witnessing that Generic won  $\Agame(R,\bbQ^2_{f,\bar{F}})$. For
every $\eta\in R^*$ we have a sequence $\Phi^*(\eta)=\bigcup\big\{
\sigma^\alpha_{\eta\rest (\alpha+1)}: \alpha+1\leq \lh(\eta)\big\}$. This
produces a tree $\cS= \{\Phi^*(\eta)\rest\xi: \eta\in R,\
\xi<\gamma_{\lh(\eta)}\}$ such that $R^*\forces_{\bbQ^2_{f,\bar{F}}} 
\name{g}\in \lim\big(\cS\big)$. By our construction, we may also claim that
$\cS$ is a $c$--localizing tree.
  \end{proof}

Unlike the case of $k$--localizations in \cite{NeRo93},
$(b,\kappa)$--localization property is not preserved in $\kappa$--support
iterations and/or products. Let us give an appropriate example for a product.

\begin{example}
  Let $2<\mu<\kappa$ and let $\lambda$ be an infinite cardinal, $\mu\leq
  \lambda< \kappa$ and let $\mu^*=\mu^\lambda$. Let $\bbQ$ be the (full
  support) product of $\lambda$ copies of $\bbD_{\bar{\mu}}$. Then $\bbQ$
  fails the $(\overline{\mu^*},\kappa)$--localization property (so it also fails
  $(\overline{\mu^+},\kappa)$--localization). 
\end{example}

\begin{proof}
Fix a bijection $\upsilon:{}^\lambda\mu\longrightarrow \mu^*$  and define
$\Upsilon:{}^\lambda\big({}^\kappa\mu)\longrightarrow
{}^\kappa\big(\mu^*\big)$ by letting for $\eta_\alpha\in {}^\kappa\mu$ (for 
$\alpha<\lambda$): 
  \[\Upsilon\Big(\big\langle\eta_\alpha:\alpha<\lambda\big\rangle \Big)=
\Big\langle \upsilon\big(\langle\eta_\alpha(\xi):\alpha<\lambda\rangle
\big): \xi<\kappa \Big\rangle.\]
Let $\name{W}_\alpha$ be a $\bbQ$--name for the generic function in
${}^\kappa\mu$ added by $\bbD_{\bar{\mu}}$ on $\alpha^{\rm th}$ coordinate
and let $\name{V}$ be a $\bbQ$--name for $\Upsilon\big(\langle
\name{W}_\alpha: \alpha<\lambda\rangle\big)$.

Assume $\cS$ is a $(\overline{\mu^*},\kappa)$--localizing tree and
$\bar{S}=\langle S_\alpha:\alpha<\lambda\rangle\in\bbQ$. It should be clear
that we may choose $\xi<\kappa$ and a condition $\bar{T}=\langle
T_\alpha:\alpha<\lambda\rangle\in\bbQ$ stronger than $\bar{S}$ and such that
$\lh\big(\mrot(T_\alpha)\big)=\xi$ for all $\alpha<\lambda$. Let
\[s=\big\langle \upsilon\big(\langle\mrot(T_\alpha)(\zeta):\alpha<
  \lambda\rangle \big): \zeta<\xi \big\rangle\in {}^\xi(\mu^*).\]
Now, if $s\notin \cS$, then $\bar{T}\forces_{\bbQ} \name{V}\notin
\lim\big(\cS\big)$. If $s\in \cS$, then $|\suc_{\cS}(s)|<\mu^*$ so we may
find $\sigma=\langle\sigma_\alpha:\alpha<\lambda\rangle\in {}^\lambda\mu$
such that $\upsilon(\sigma)\notin\suc_\cS(s)$. Put $t_\alpha
=\mrot(T_\alpha)\conc \langle \sigma_\alpha\rangle$ for $\alpha<\lambda$ and
note that $t_\alpha\in T_\alpha$. Finally, let $\bar{T}^*=\langle
\big(T_\alpha\big)_{t_\alpha}:\alpha<\lambda\rangle$. It should be clear that
$\bar{T}^* \in\bbQ$ is stronger than $\bar{T}$ so also stronger than
$\bar{S}$ and $\bar{T}^*\forces_\bbQ \name{V}\notin \lim(\cS)$. 
\end{proof}

\end{document}